\theoremstyle{theorem}
\newtheorem{theorem}{\sc \textbf{Theorem}}[section]  
\newtheorem{proposition}[theorem]{\sc \textbf{Proposition}}   
\newtheorem{corollary}[theorem]{\sc \textbf{Corollary}}        
\newtheorem{lemma}[theorem]{\sc \textbf{Lemma}}
\renewcommand{\approx}{ \asymp}
\theoremstyle{remark}
\newtheorem{remark}[theorem]{\sc \textbf{Remark}}
\newcommand{\Ls}{\mathcal{L}}
\newcommand{\Rs}{\mathcal{R}}
\def\bX{{\mathbb X}}
\numberwithin{equation}{section}
\newcommand{\N}{\mathbb{N}}
\newcommand{\R}{\mathbb{R}}
\def\e{\mathrm{e}}
\newcommand{\dd}{d}
\newcommand{\x}{\underline{x}}
\newcommand{\Dom}{\mathrm{Dom}}
\subjclass[2020]{35K08, 22E30, 41A60, 58J35}
\keywords{Damek--Ricci spaces, Heat kernel, Asymptotic estimates}
\author[Tommaso Bruno]{Tommaso Bruno}
\address{Dipartimento di Matematica, Universit\`a degli Studi di Genova\\ Via Dodecaneso 35, 16146 Genova, Italy}
\email{brunot@dima.unige.it}
\author[Federico Santagati]{Federico Santagati}
\address{Dipartimento di Scienze Matematiche ``G.\ L.\ Lagrange'', Politecnico di Torino\\ Corso Duca degli Abruzzi 24, 10129 Torino, Italy}
\email{federico.santagati@polito.it}
\begin{document}
\title[Heat kernel on Damek--Ricci Spaces]{Optimal Heat kernel bounds and asymptotics \\ on Damek--Ricci Spaces}
	\maketitle

\begin{abstract}
We give optimal bounds for the radial, space and time derivatives of arbitrary order of the heat kernel of the Laplace--Beltrami operator on Damek--Ricci spaces. In the case of symmetric spaces of rank one, these complete and actually improve conjectured estimates by Anker and Ji. We also provide asymptotics at infinity of all the radial and time derivates of the kernel. Along the way, we provide sharp bounds for all the derivatives of the Riemannian distance and obtain analogous bounds for those of the heat kernel of the distinguished Laplacian. 
\end{abstract}

\section{Introduction}
In this paper we obtain optimal bounds for all the derivatives of the heat kernel $h_{t}$ of the Laplace--Beltrami operator $\Ls$ on Damek--Ricci spaces. These are one-dimensional extensions $S= N \ltimes \R^{+}$ of an H-type group $N$ and include all symmetric spaces of noncompact type of rank one and hence all real hyperbolic spaces. Heat kernel estimates in such framework, for $h_{t}$ itself, for some of its derivatives and in different degrees of generality, have been object of longstanding investigations; see the milestones~\cite{DM,ADY,AJ} but also~\cite{GM, LS, FP, SV}, and the references therein. Our estimates improve and extend all those known so far for the derivatives of $h_{t}$ on Damek--Ricci spaces, and in the case of symmetric spaces of noncompact type of rank one they complete (actually  improve) conjectured bounds by Anker and Ji~\cite{AJ}. We also transfer some of our estimates of $h_{t}$ to analogous estimates of the heat kernel of the so-called distinguished Laplacian $\Delta$ on $S$, by means of a well-known equivalence of $\Delta$ and $\Ls$ at the $L^{2}$ level. 

The derivatives of $h_{t}$ which we consider are of three types:  \emph{radial} derivatives,  \emph{time} derivatives, and \emph{space} derivatives, namely those along the vector fields in the Lie algebra of $S$. Of the first two, we also provide the asymptotic behavior at infinity. On the way to estimate the third ones, we establish sharp bounds for all the derivatives of the left-invariant Riemannian distance on $S$, which have independent interest.

It is well known that heat kernel estimates offer a great number of applications~\cite{SC}. We conclude our paper by showing the weak-type $(1,1)$ of certain maximal operators associated with $h_{t}$ and its derivatives, and by obtaining spectral properties of Ornstein--Uhlenbeck operators on $S$. We envisage more applications in the next future, ranging from estimates for the derivatives of the Bessel–Green–Riesz and Poisson kernels of the semigroups generated by $\Ls$~\cite{AJ} to the study of function spaces~\cite{BCF, B} and related geometric inequalities~\cite{BPV2}, to boundedness results for singular integrals associated with $\Ls$~\cite{GS,MV,A1}, just to name a few.

We now describe in detail the setting and the main results of the paper.

\subsection{Damek--Ricci spaces. Preliminaries}

An H-type group $N$ is a 2-step stratified group whose Lie algebra $\mathfrak{n}$ is endowed with an inner product $(\, \cdot\,,\, \cdot \,)$ such that
\begin{itemize}
	\item if $\mathfrak{z}$ is the centre of $\mathfrak{n}$ and $\mathfrak{v}=\mathfrak{z}^\perp$, then $[\mathfrak{v},\mathfrak{v}]=\mathfrak{z}$;
	\item for every $Z \in \mathfrak{z}$, the map $J_Z\colon \mathfrak{v} \to \mathfrak{v}$, 
	\[(J_Z X,Y)=( Z, [X,Y]) \qquad \forall X,Y\in \mathfrak{v},\]
	is an isometry whenever $(Z, Z)=1$.
\end{itemize}
In particular, $\mathfrak{n}$ stratifies as $\mathfrak{v} \oplus \mathfrak{z}$. We shall realize an H-type group $N$ as $\R^{\mu}\times \R^\nu $, for some $\mu,\nu\in \N$, via the exponential map. More precisely, we shall denote by $(x,z)$ the elements of $N$, where  $x\in \R^{\mu}$ and $z \in \R^\nu$. We denote by  $(e_1,\dots,e_{\mu})$ and $(u_1,\dots,u_\nu)$ the standard bases of $\R^{\mu}$ and $\R^{\nu}$ respectively. Under this identification, the Haar measure on $N$ is the Lebesgue measure $\dd x \, \dd z$. The maps $\{J_Z \colon Z\in \mathfrak{z}\}$ are identified with $\mu \times \mu$ skew-symmetric matrices $\{J_z \colon z\in \R^\nu\}$ which are orthogonal whenever $|z|=1$. This identification endows $\R^{\mu}\times \R^{\nu}$ with the group law
\[(x,z)\cdot (x',z') = \bigg(x+x',z+z' + \frac{1}{2} \sum_{k=1}^\nu (J_{u_k}x,x') u_k\bigg).\]
We recall that $\mu$ is always even, and we denote by $Q={(\mu+2\nu)}/{2}$ (half) the homogeneous dimension of $N$.

\smallskip

A Damek--Ricci space~\cite{DR1,DR2} is then the one-dimensional extension $S= N \ltimes \R^{+}$ of an H-type group $N$ obtained by making $\R^+$ act on $N$ by homogeneous dilations. The aforementioned notation for $N$ will be fixed throughout, and the elements of $S$ will be correspondingly denoted by $(x,z,a)$, where $a\in \R^{+}$.  The resulting group law on $S$ is then given by
\[
(x,z,a) \cdot (x',z',a') = \bigg(x + \sqrt{a}x', z+az'+ \frac{1}{2}\sqrt{a} \sum_{k=1}^\nu (J_{u_k}x,x') u_k, aa'\bigg),
\]
and its identity is $e=(0,0,1)$. For notational convenience, the generic element of $S$ will sometimes be denoted also by $\x = (x,z,a)$. The Lie algebra of $S$ can be identified with $\mathfrak{s}= \mathfrak{v} \oplus \mathfrak{z} \oplus \R$ and its Lie bracket satisfies
\begin{equation}\label{commcampi}
[(X,Z,b),(X',Z',b')] = \big( \tfrac{1}{2}bX' - \tfrac{1}{2}b'X, bZ'-b'Z+[X,X'],0\big).
\end{equation}
We endow $S$  with the left-invariant Riemannian metric induced by the product
\[
\langle (X,Z,b), (X',Z',b')\rangle = (X,X') +  (Z,Z' ) + bb'
\]
on $\mathfrak{s}$. The associated Riemannian left-invariant (Haar) measure on $S$ is 
\[
\dd \lambda (x,z,a) = a^{-Q-1}\dd x\, \dd z\, \dd a.
\]
The modular function is $\delta (x,z,a) = a^{-Q}$, and the associated right Haar measure is $\dd \rho (x,z,a) = a^{-1}\dd x\, \dd z\, \dd a $. We shall denote by $n=\mu+\nu+1$ the dimension of $S$. We recall that the Riemannian manifold $S$ is locally doubling but not globally doubling, as the measure of its balls grows exponentially for large radii; cf.~\cite[(1.18)]{ADY}.

\smallskip

Denote by $d \colon S\times S \to [0,\infty)$ the Riemannian distance. A function $f$ on $S$ is said to be radial if it depends only on the distance from the identity, i.e., if there exists a function $f_0$ on $[0,+\infty)$ such that $f(x,z,a)=f_0(r)$, where 
\[
r=r(x,z,a) = d ((x,z,a),e).
\]
We abuse the notation and write $f(r)$ instead of $f_0(r)$; and the terminology by referring to $r\colon S \to [0,\infty)$ as the \emph{distance function}.  A more detailed account on $r$ and further details will be given in due course, in particular in Section~\ref{sec:distance} below. 

\smallskip

Let $\bX_0,\bX_1,\dots, \bX_{\mu}$ and $ \bX_{\mu+1}, \dots, \bX_{n-1}$ be the left-invariant vector fields  on $S$ which agree with $\partial_{a}, \partial_{x_1}, \dots, \partial_{x_\mu}$ and $\partial_{z_{1}},\dots, \partial_{z_{\nu}}$  respectively at the identity. In other words, given $f$ in $ C^{\infty}_c(S)$,
\begin{align*}
\bX_0f(x,z,a)
&= \frac{\dd}{\dd t }\Big\lvert_{t =0}f\big((x,z,a) \cdot (0,0,e^t)\big),
\end{align*}
while, for $\ell=1,\dots,\mu$,
\begin{align*}
\bX_\ell f(x,z,a)
& = \frac{\dd}{\dd t }\Big\lvert_{t =0}f\big((x, z,a) \cdot (te_\ell ,0,1)\big),
\end{align*}
and, for $k=1,\dots,\nu$,
\begin{align*}
\bX_{\mu +k} f(x,z,a)
&= \frac{\dd}{\dd t }\Big\lvert_{t =0}f\big((x, z,a) \cdot (0,tu_k,1)\big).
\end{align*}
Simple computations then lead to 
\begin{align*}
\bX_0 = a\, \partial_a, \qquad \bX_\ell = \sqrt{a}\, \bigg( \partial_{x_\ell} + \frac{1}{2}\sum_{k=1}^{\nu}( J_{u_k} x,e_\ell)\partial_{z_k}\bigg), \qquad \mathbb{X}_{\mu+k} = a \,\partial_{z_k},
\end{align*}
for $\ell=1,\dots,\mu$ and $k=1,\dots,\nu$. In particular, the vector fields $\bX_j$ do not involve derivatives in the variable $a$ when $j\neq 0$. The non-negative Laplace--Beltrami operator 
on $S$ takes the form, see~\cite{Damek},
\[
\Ls =-\sum_{j=0}^{n-1} \bX_j^2 +Q \bX_0.
\]
The operator $\Ls$ is essentially self-adjoint on $L^{2}(\lambda)$ and its $L^{2}$-spectrum lies in $[Q^2/4, \infty)$. We shall denote by $\e^{-t\Ls}$ the heat semigroup generated by $\Ls$ and by $h_t$ its radial right-convolution kernel on $S$, i.e.\ its \emph{heat kernel}. In particular
 \[
\e^{-t\Ls}f= f\ast h_t
\]
where $\ast $ denotes the convolution on $S$. Following the aforementioned abuse, we shall sometimes interpret $h_{t}$ as a function on $S$, and other times as a function on $[0,\infty)$, namely $h_{t}(\x) = h_{t}(r(\x))$.

\begin{remark}
If one considers the degenerate case $N=\R^{\nu}$ among H-type groups, namely $\mathfrak{v}=\{0\}$ (whence $\mu=0$) and $\mathfrak{z}= \R^{\nu}$, then the so-called ``$ax+b$'' groups and hence all real hyperbolic spaces fall within the above framework. In this order of ideas, all symmetric spaces of noncompact type and rank one are Damek--Ricci spaces. All of our results do include such degenerate case, though the proofs might require minor modifications (actually simplifications, as $\mu=0$ and the $x$-variable should always be neglected). No further comment on this will be given and the few details will be left to the interested reader.
\end{remark}

\subsection{More on the heat kernel. Main results}
The spherical analysis available on $S$ provides a rather explicit expression of $h_{t}$. Denote, for future convenience, 
\begin{equation}\label{ch}
\mathfrak{c}_{k}= 2^{-\mu-\frac{\nu}{2}-1-k} \pi^{-\frac{n}{2}}, \qquad k=0,1,\dots
\end{equation}
If $\nu$ is even, then
\begin{align}\label{heatkerneleven}
    h_t(r) = \mathfrak{c}_{0} t^{-\frac12} \e^{-\frac{Q^2}{4}t} \left( -\frac{1}{\sinh r} \frac{\partial}{\partial r}\right)^{\frac{\nu}{2}} \left( -\frac{1}{\sinh \frac{r}{2}} \frac{\partial}{\partial r}\right)^{\frac{\mu}{2}} \e^{-\frac{r^2}{4t}}
\end{align}
while if $\nu$ is odd
\begin{align}\label{heatkernelodd}
\nonumber h_t(r) &= \mathfrak{c}_{0} \pi^{-\frac{1}{2}}t^{-\frac12} \e^{-\frac{Q^2}{4}t} \\
 &\times \int_r^\infty
 \frac{\sinh s}{\sqrt{\cosh s - \cosh r}} \left( -\frac{1}{\sinh s} \frac{\partial}{\partial s}\right)^{\frac{\nu+1}{2}} \left( -\frac{1}{\sinh \frac{s}{2}} \frac{\partial}{\partial s}\right)^{\frac{\mu}{2}} \e^{-\frac{s^2}{4t}} \, \dd s.
\end{align}
This difference is due to the underlying presence of an inverse Abel transform; we will not go into details here, but rather pick~\eqref{heatkerneleven} and~\eqref{heatkernelodd} as starting points of our analysis. The reader can find a comprehensive discussion in~\cite{ADY}.

Precise bounds for $h_{t}$ and its gradient were obtained in~\cite[\S 5]{ADY}. In particular
\begin{align}\label{boundsht}
h_t(r) \approx t^{-\frac32} (1+r)  \left(1+ \frac{1+r}{t}\right)^{\frac{n-3}{2}} \e^{-\frac{Q^2}{4}t - \frac{Q}{2}r - \frac{r^2}{4t}}
\end{align}
and
\begin{align}\label{boundsgradht}
 |\nabla \, h_t(r)|  &= \bigg| \frac{\partial}{\partial r} h_t(r)\bigg| \approx t^{-\frac32}  r  \left(1+ \frac{1+r}{t}\right)^{\frac{n-1}{2}} \e^{-\frac{Q^2}{4}t - \frac{Q}{2}r - \frac{r^2}{4t}}.
\end{align}
Here and all throughout, $ v \approx w$ for two positive functions $v$ and $w$ means that there exists a constant $C$ (depending only on structural constants of $S$, and possibly on other circumstantial parameters) such that $C^{-1}v \leq w \leq Cv$. Analogously, we shall write $v\lesssim w$ if there exists such a $C$ with the property that $v\leq Cw$. 

Observe in particular that the upper bound in~\eqref{boundsgradht} is equivalent, by~\eqref{boundsht}, to
\begin{equation}\label{boundder}
 \bigg| \frac{\partial}{\partial r} h_t(r)\bigg| \lesssim h_{t}(r) \times
 \begin{cases} 
 1+ \frac{1}{\sqrt{t}} + \frac{r}{t}   &\text{if $r >1$} \\ 
 r \Big(1+\frac{1}{t}\Big) \qquad &\text{if $r\leq 1$}.
 \end{cases}
\end{equation}

One of the aims of this paper is to generalize~\eqref{boundder} by considering derivatives of arbitrary order. In particular, for $k\in \N$ and a multi-index $J= \{ 0, \dots, n-1\}^{k}$, we shall write $J_{j}$ for $J$'s $j$-th component and $|J|$ for its length, i.e., $|J|=k$. By $\bX_{J}$ we shall mean the left-invariant differential operator
\[
\bX_{J}= \bX_{J_{1}}\bX_{J_{2}} \cdots \bX_{J_{k}}.
\]

 Let us define, for $k\in \N$ and $t,r>0$, the functions
\begin{align*}
\Psi_{k}(r,t)&= 
 \begin{cases} 
 \Big(1+ \frac{1}{\sqrt{t}} + \frac{r}{t}\Big)^k  &\text{if $r >1$, or $r\leq 1$ and $k$ is even} \\ 
 r \Big(1+ \frac{1}{\sqrt{t}} + \frac{r}{t}\Big)^{k-1}\Big(1+\frac{1}{t}\Big) \qquad &\text{if $r\leq 1$ and $k$ is odd,}
 \end{cases}
\end{align*}
and
\begin{align*}
\tilde \Psi_{k}(r,t)&= 
 \begin{cases} 
 \Big(1+ \frac{1}{\sqrt{t}} + \frac{r}{t}\Big)^k  &\text{if $r >1$, or $r\leq 1$ and $k$ is even} \\ 
\Big(1+ \frac{1}{\sqrt{t}} + \frac{r}{t}\Big)^{k-1}\Big(1+\frac{r}{t}\Big) \qquad &\text{if $r\leq 1$ and $k$ is odd.}
 \end{cases}
\end{align*}
Observe that $\tilde \Psi_{k}(r,t) = \Psi_{k}(r,t)$ if $r> 1$ or if $r\leq 1$ and $k$ is even, while $\Psi_{k}(r,t) \leq \tilde \Psi_{k}(r,t) $ in the remaining case. We shall also need, for $p,q\in \N$, the function
\begin{equation}\label{thetapq}
\Theta_{p,q}(r) = \bigg(\frac{r}{1+r} \frac{\e^{r/2}}{2\sinh(r/2)}\bigg)^{p} \bigg(\frac{r}{1+2r} \frac{\e^{r}}{\sinh(r)}\bigg)^{q}, \qquad r>0.
\end{equation}
Notice that $\lim_{r\to 0^{+}} \Theta_{p,q}(r) = \lim_{r\to \infty} \Theta_{p,q}(r)=1$. Our main results are as follows.

\begin{theorem}\label{teo:main}
Suppose $m,k\in \N$. Then the following holds.
\begin{itemize}
\item[(1)] There exists $C_{k}>0$ such that
\[
\bigg| \frac{\partial^k}{\partial r^k} h_t(r)\bigg|  \leq C_{k} \Psi_{k}(r,t) \, h_{t}(r), \qquad \forall \, t,r>0.
\]
\item[(2)]  There exists $C_{m,k}>0$ such that for all $J\in \{0,\dots, n-1\}^{k}$
\[
\bigg|  \frac{\partial^{m} }{\partial t^{m}}  \bX_{J} h_{t}(r)\bigg| \leq C_{m,k} \tilde \Psi_{2m+k}(r,t) \, h_{t}(r), \qquad \forall \, t,r>0.
\]
If in particular  $J = \{j\}^{k}$ for some $j\in\{0,\dots, n-1\}$, then
\[
|\bX_j^{k} h_{t}(r)| \leq C_{0,k}  \Psi_{k}(r,t) \, h_{t}(r), \qquad \forall \, t,r>0.
\]
\item[(3)] If $(1+r)/t \to \infty$, then
    \begin{align*}
    &\frac{\partial^k}{\partial r^k} h_t(r) = (-1)^{k} \mathfrak{c}_{0} t^{-\frac12} \e^{-\frac{Q^2}{4}t -\frac{r^2}{4t}-\frac{Q}{2} r }(\e^{-r}\sinh r)^{k}\\
   &\qquad \qquad\qquad \times   \bigg(\frac{1+r}{t}\bigg)^{\frac{\mu}{2}}\bigg(\frac{\frac{1}{2} +r}{t}\bigg)^{k + \frac{\nu}{2}} \Theta_{\frac{\mu}{2},k+\frac{\nu}{2}}(r) \bigg[ 1+ O\bigg( \sqrt{\frac{t}{1+r}}\bigg)\bigg],
\end{align*} 
while if $r\to \infty$ and $t>0$ is fixed, then
\begin{align*}  
   \frac{\partial^{m} }{\partial t^{m}}   \frac{\partial^{k} }{\partial r^{k}} h_{t} (r)
    &\sim (-1)^{k}  \mathfrak{c}_{k+2m} \, t^{-\frac{1}{2}}\, \e^{-\frac{Q^2}{4} t}  \, \e^{-\frac{r^{2}}{4t}}\, \e^{-\frac{Q}{2}r }  \bigg(\frac{r}{t} \bigg)^{ \frac{\mu}{2} + \frac{\nu}{2}+k+2m}.
    \end{align*}
\end{itemize}
\end{theorem}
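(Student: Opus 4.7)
The plan is to work directly from the explicit formulas~\eqref{heatkerneleven}--\eqref{heatkernelodd}. I will introduce the operators $\cD_\alpha := -\frac{1}{\sinh(\alpha r)}\partial_r$ for $\alpha\in\{1/2,1\}$, each of which is a derivation on smooth functions of $r$ and satisfies $\cD_\alpha e^{-r^2/(4t)} = \frac{r}{2t\sinh(\alpha r)} e^{-r^2/(4t)}$. Iterating the Leibniz rule gives, when $\nu$ is even,
\[
\cD_1^{\nu/2}\cD_{1/2}^{\mu/2}\, e^{-r^2/(4t)} \;=\; G_t(r)\,e^{-r^2/(4t)},
\]
where $G_t$ is a finite sum whose ``principal'' term (obtained when every $\cD_\alpha$ lands on the Gaussian) equals $\bigl(\tfrac{r}{2t\sinh r}\bigr)^{\nu/2}\bigl(\tfrac{r}{2t\sinh(r/2)}\bigr)^{\mu/2}$, and whose remaining terms each involve at least one derivative of a factor $\tfrac{r}{\sinh(\alpha\,\cdot)}$. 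For part (1), I will expand $\partial_r^k[G_t(r) e^{-r^2/(4t)}]$ by Leibniz: each $\partial_r$ either hits the Gaussian, contributing $-r/(2t)$, or differentiates a factor of $G_t$. When $r>1$, where $\sinh(\alpha r)^{-1}$ and all its derivatives are $\lesssim e^{-\alpha r}$, every such contribution is of size $1 + r/t + 1/\sqrt t$, yielding $\Psi_k$. When $r\le 1$, the functions $\tfrac{r}{\sinh(\alpha r)}$ are smooth and even in $r$, so their odd-order derivatives vanish at $0$; this yields the extra power of $r$ appearing in $\Psi_k$ when $k$ is odd. The odd-$\nu$ case is handled by differentiating under the integral in~\eqref{heatkernelodd} and running the same analysis on the $s$-integrand.

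For (2), I will reduce time derivatives to radial ones via $\partial_t h_t = -\Ls h_t$: on radial functions, $\Ls$ takes the form $-\partial_r^2 + a(r)\partial_r + Q^2/4$, with a coefficient $a(r)$ that is $O(1)$ as $r\to\infty$ and $O(1/r)$ near $0$. Iterating $m$ times expresses $\partial_t^m h_t$ as a linear combination of $\partial_r^j h_t$ for $j\le 2m$, with coefficients whose singular behavior near $r=0$ accounts for the $(1+r/t)$ factor in $\tilde\Psi_k$ (in place of the $r(1+1/t)$ factor in $\Psi_k$) when the parity is unfavorable. Space derivatives are converted to radial ones by the chain rule $\bX_J[h_t(r(\cdot))] = \sum_{j=1}^{|J|}(\partial_r^j h_t)(r)\, P_{J,j}$, where $P_{J,j}$ is a polynomial in iterated vector-field derivatives of $r$; the sharp bounds on derivatives of $r$ established in Section~\ref{sec:distance} then produce the $\tilde\Psi_{2m+k}$ estimate. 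The improved $\Psi_k$ bound when $J=\{j\}^k$ comes from the fact that in this case $P_{J,j}$ consists only of products of iterated $\bX_j$-derivatives of $r$, which I can control more tightly using the results of Section~\ref{sec:distance} and the sharper radial bound from (1).

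For (3), in the regime $(1+r)/t\to\infty$ each non-principal term in the Leibniz expansion of $\partial_r^k[G_t\,e^{-r^2/(4t)}]$ is smaller than the principal one by a relative factor of order $\sqrt{t/(1+r)}$, since replacing a Gaussian-differentiation (which yields $r/(2t)$) by a derivative of an $r/\sinh(\alpha r)$-factor costs one power of $r/t$. A direct computation, using $Q/2=\mu/4+\nu/2$ together with the definition~\eqref{thetapq} of $\Theta_{p,q}$, recasts the principal term in the stated form. For $r\to\infty$ with $t$ fixed, substituting $\sinh(\alpha r) = \tfrac{1}{2}e^{\alpha r}(1+O(e^{-2\alpha r}))$ collapses every hyperbolic factor and produces the constant $\mathfrak{c}_{k+2m}/\mathfrak{c}_0 = 2^{-(k+2m)}$ from the repeated $\tfrac{1}{2}$'s, leaving only the dominant monomial $(r/t)^{\mu/2+\nu/2+k+2m}$, where the extra $2m$ powers come from $\partial_t^m$ via the same mechanism. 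The odd-$\nu$ integral~\eqref{heatkernelodd} will require a Laplace-method expansion around $s=r$ via $u=s-r$, isolating the same leading term with the same error. The main technical obstacles I anticipate are twofold: keeping the relative error $\sqrt{t/(1+r)}$ (rather than the cruder $t/(1+r)$) in the first asymptotic, which will require tracking each subleading term in the Leibniz expansion individually rather than bounding it in bulk; and establishing the sharp bounds on the derivatives of $r$ used throughout (2) and (3), which is the subject of Section~\ref{sec:distance}.
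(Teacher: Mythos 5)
Your outline for parts~(1) and~(3) is a legitimate alternative organization of the paper's route: you expand $\partial_r^k$ directly on $\Rs_{\frac\mu2,\frac\nu2}e^{-r^2/(4t)}=G_t\,e^{-r^2/(4t)}$ by Leibniz and single out the principal (all-on-Gaussian) term, whereas the paper first rewrites $\partial_r^k$ as a $\sinh/\cosh$-coefficient combination of the iterates~$\Rs^j$ (Lemma~\ref{lem: radial derivates}) and then invokes the two-sided bound of Lemma~\ref{lem: heatderapprox}, itself resting on~\cite[Prop.~5.22]{ADY}. The two organizations track the same quantities; your parity observation (odd-order derivatives of the even factors $r/\sinh(\alpha r)$ vanish at the origin) plays the role of the explicit $p_{a,b}$-bookkeeping in Lemma~\ref{lem: radial derivates}. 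Both approaches also implicitly require a \emph{lower} bound showing the principal term dominates the sum, which your plan sketches but does not supply, and which in the odd-$\nu$ case needs the passage through the Abel-transformed functions $f_k$ (or, in your setting, some control under the $s$-integral in~\eqref{heatkernelodd}); this is plausible but nontrivial.

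Part~(2), however, contains a genuine gap. Converting $\bX_J$ to radial derivatives by Faà di Bruno and plugging in Theorem~\ref{teo:derd} gives, for $r\le 1$,
\[
|\bX_J h_t(r)|\;\lesssim\; h_t(r)\sum_{j=1}^{|J|}\Psi_j(r,t)\,r^{\,j-|J|},
\]
whose $j=1$ term is $r^{2-|J|}(1+1/t)\,h_t(r)$, singular at $r=0$ once $|J|\ge3$ and far worse than the claimed $\tilde\Psi_{|J|}\,h_t$. This cannot be fixed by improving the distance-derivative bounds, since those are sharp (Proposition~\ref{sharpinf}). The same problem afflicts the radial reduction of $\partial_t^m$: already $\partial_t\partial_r h_t = h_t'''+a'(r)h_t'+a(r)h_t''$ with $a(r)\approx 1/r$ produces the singular terms $r^{-2}\Psi_1 h_t+r^{-1}\Psi_2 h_t$ near $0$. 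Since $\bX_J h_t$ is a smooth function on $S$, cancellation among these terms is forced, and extracting it is the crux of the argument: this is exactly what the paper's machinery around Propositions~\ref{propodd} and~\ref{prop:extrar} (the auxiliary functions $\Phi_{\cdot,\cdot,t}$, $\Upsilon_{\cdot,J}$, $\Xi_{\cdot,J}$ and Lemma~\ref{xismooth} establishing the smoothness of the $\Xi$'s) is designed to do. Your proposal does not identify any such cancellation mechanism, so as written it does not reach the $\tilde\Psi_{2m+k}$ estimate for $r\le 1$. A minor additional slip: $\mathrm{rad}(\Ls)=-\partial_r^2-\bigl\{\tfrac{\mu+\nu}{2}\coth\tfrac r2+\tfrac\nu2\tanh\tfrac r2\bigr\}\partial_r$ has no zeroth-order term, so the ``$+Q^2/4$'' in your form of the radial Laplacian should not be there.
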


Along the way to Theorem~\ref{teo:main}~(2), we obtain sharp bounds for all the derivatives of the Riemannian distance.
\begin{theorem}\label{teo:derd}
Suppose $k\in \N$. There exists a constant $C_{k}>0$ such that, for all $J \in  \{ 0,\dots, n-1\}^k$,
\[
|\bX_J r|  \leq C_{k} \bigg( \frac{1+r}{r} \bigg)^{k-1}.
\]
\end{theorem}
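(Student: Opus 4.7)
\emph{Proof plan.} We argue by induction on $k=|J|$. The base case $k=1$ follows from the orthonormality of the frame $\{\bX_0,\dots,\bX_{n-1}\}$: the left-invariant Riemannian metric on $S$ is defined so that this frame is orthonormal at the identity, hence, by left-invariance of the metric, everywhere on $S$. Since $|\nabla_g r|^2=1$ almost everywhere on $S\setminus\{e\}$, one has $\sum_j (\bX_j r)^2=1$, whence $|\bX_j r|\le 1 = ((1+r)/r)^{0}$.

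For the inductive step ($k\ge 2$), set $F:=\cosh r$; by Section~\ref{sec:distance}, $F$ is an explicit rational function of $(x,z,a)$. The Fa\`a di Bruno formula for (not necessarily commuting) differential operators, applied to $F=\cosh\circ r$ and rearranged by isolating the trivial partition, gives
\[
\bX_J r \;=\; \frac{1}{\sinh r}\Biggl(\bX_J F \;-\; \sum_{\substack{\pi\in\mathcal{P}(\{1,\dots,k\})\\ |\pi|\ge 2}}\cosh^{(|\pi|)}(r)\prod_{B\in\pi}\bX_{J_B}r\Biggr),
\]
where $\bX_{J_B}:=\bX_{J_{i_1}}\cdots \bX_{J_{i_m}}$ for a block $B=\{i_1<\dots<i_m\}\subseteq\{1,\dots,k\}$. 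Every block has $|B|<k$, so by the inductive hypothesis $|\bX_{J_B}r|\lesssim((1+r)/r)^{|B|-1}$; combined with $|\cosh^{(|\pi|)}(r)|\le \cosh r$, each correction term is bounded by $\cosh r\cdot((1+r)/r)^{k-|\pi|}$, and dividing by $\sinh r$ while using $\coth r\lesssim (1+r)/r$ and $|\pi|\ge 2$ yields the desired bound $((1+r)/r)^{k-1}$ on each such term.

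The crux is therefore the estimate $|\bX_J F|\lesssim \sinh r\cdot ((1+r)/r)^{k-1}$. For $r\le 1$, one has $F-1 = \tfrac12 r^2 + O(r^4)$, which is smooth and vanishes to order $2$ at the identity, so $\bX_J F$ is bounded on this region; since $\sinh r\cdot((1+r)/r)^{k-1}\asymp r^{2-k}\gtrsim 1$ for $k\ge 2$, the bound is immediate. For $r\ge 1$ the target reduces to $|\bX_J F|\lesssim \cosh r$, which follows from the explicit form of $F$ and the left-invariant vector fields: each of the operators $a\partial_a$, $\sqrt{a}(\partial_{x_\ell}+\dots)$, $a\partial_{z_k}$ applied to the rational expression for $F$ preserves its growth $\asymp F$ at infinity, and this propagates to $\bX_J F$ for arbitrary $J$ by a parallel induction. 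The principal obstacle is precisely this last uniform step: one must track carefully, using the explicit polynomial structure from Section~\ref{sec:distance}, that no composition of the $\bX_j$'s inflates the ``hyperbolic order'' of $F$ at infinity.
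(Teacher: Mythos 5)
Your overall strategy coincides with the paper's: reduce the problem via a non-commutative Fa\`a di Bruno formula applied to $F=\cosh r$, isolate the term $(\bX_J r)\sinh r$, divide by $\sinh r$, and absorb the correction terms using the inductive hypothesis and $\coth r\lesssim (1+r)/r$. That part of your argument is sound and matches Lemmas~\ref{Faamisto} and~\ref{weakFaa} in spirit. The base case $k=1$ via $|\nabla r|\le 1$ is also identical to the paper.

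However, there is a genuine gap at precisely the point you flag as ``the principal obstacle'': the estimate $|\bX_J(\cosh r)|\lesssim\cosh r$ for arbitrary $J$ and all $r>0$ (Lemma~\ref{lem: dercosh} in the paper) is asserted but not proved, and the mechanism you sketch would not close. A ``parallel induction'' on the claim $|\bX_J F|\lesssim F$ is not inductively stable on its own: knowing $|\bX_{J'}F|\lesssim F$ for $|J'|<|J|$ does not imply $|\bX_j\bX_{J'}F|\lesssim F$, because a pointwise bound $|g|\lesssim F$ says nothing about $|\bX_j g|$. The statement that each $\bX_j$ ``preserves the growth $\asymp F$'' is moreover false as a pointwise two-sided comparison (e.g.\ $\bX_0 F$ changes sign). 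What actually makes the argument work, and what is missing from your outline, is the algebraic structure of $\cosh r$ in exponential coordinates: (i) as a function of $(x,z)$ it is a polynomial of degree $4$ in $x$ and degree $2$ in $z$, so $\bX_J\cosh r=0$ whenever $0\notin J$ and $|J|\ge 5$; (ii) the stabilization identity $\bX_0^2\cosh r=\cosh r-\tfrac{1}{4}|x|^2$, which shows that iterated $\bX_0$-derivatives do not grow; and (iii) the commutation relations $[\bX_0,\bX_j]=c_j\bX_j$, which allow one to push all $\bX_0$'s to the right and reduce any mixed $\bX_J$ to the two cases above plus lower-order terms. Without invoking these three facts the crucial bound does not follow, so the proof is incomplete at its decisive step.
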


The results above extend all the estimates of this kind previously known on Damek--Ricci spaces or on real hyperbolic spaces. A few comments may help to clarify our results. 

\begin{itemize}
\item[(i)] The estimates for the derivatives of order $\geq 2$ in Theorems~\ref{teo:main}~(1) and~\ref{teo:derd} seem to be the very first of their kind on any Damek--Ricci space, actually on any Lie group of exponential growth. Those for the space derivatives of $h_{t}$ contained in Theorem~\ref{teo:main}~(2) are obtained by a (nontrivial) combination of these, by means of a new algorithm which might have independent interest; see in particular Proposition~\ref{propodd} below. 
\item[(ii)] The time derivatives contained in Theorem~\ref{teo:main}~(2) generalize those obtained in~\cite[(5.18)]{LS} on real hyperbolic spaces to all Damek--Ricci spaces; they also imply those of~\cite[Theorem 1.1]{FP}. In the case of the first-order time derivative, a slightly better estimate is indeed available; on real hyperbolic spaces, this was already known~\cite[(5.16)]{LS}, and we extend it to Damek--Ricci spaces (Proposition~\ref{prop:LS} below).
\item[(iii)] The space derivatives contained in Theorem~\ref{teo:main}~(2) prove~\cite[Conjecture 3.6]{AJ} in the case of symmetric spaces of noncompact type of rank one (in higher rank the conjecture is still open). For derivatives of \emph{odd} order, Theorem~\ref{teo:main}~(2) is actually strictly better than the aforementioned conjecture, as the case $r=t \to 0^{+}$ shows. This should not surprise: in the Euclidean setting itself, odd-order derivatives show a ``better'' behavior at the origin, as they vanish at $0$, while even-order derivatives do not. It is rather more surprising, then, that $\tilde \Psi$ appears in place of $\Psi$ unless the derivatives are all along the same direction. In case of mixed derivatives, however, this cannot be improved and the distinction is really needed. Further comments will be given in due course, see in particular Remark~\ref{rem:nor} below. 
\item[(iv)] The bounds of Theorem~\ref{teo:derd} should be compared with those of the derivatives of the Euclidean distance on $\R^{n}$. While their singular behavior for $r\to 0^{+}$ is the same, interestingly, no decay appears for $r \to \infty$ on Damek--Ricci spaces. This is not a defect in our argument: we show in Proposition~\ref{sharpinf} that our estimate is sharp.
\item[(v)] To the best of our knowledge, the asymptotic expansions for the derivatives of $h_{t}$ contained in Theorem~\ref{teo:main}~(3) appear here for the first time. To prove the first part of the statement, we borrow some ideas from~\cite{GM0, GM}, though in the case $r\to \infty$ we can provide also a different argument (see Remark~\ref{GMsimplified}). A quite deeper analysis seems to be needed to obtain the estimates for $(1+r)/t \to \infty$ for the time derivatives, at least when the order is larger than $1$. Since this would go out of the scopes of the present paper, we do not tackle this problem here.
\end{itemize}

We conclude this section by describing the structure of the paper, which is as follows.  Section~\ref{sec: 2} is devoted to the study of the radial derivatives of the heat kernel. These will not only prove Theorem~\ref{teo:main}~(1), but also lay the ground for the remainder of the paper. Section~\ref{sec:distance} contains the estimates for the derivatives of the Riemannian distance which will prove Theorem~\ref{teo:derd}. In Section~\ref{sec: 4}, we combine Theorem~\ref{teo:main}~(1) and Theorem~\ref{teo:derd} to get the bounds for the space and time derivatives of $h_{t}$ stated in Theorem~\ref{teo:main}~(2). We also obtain analogous estimates for the heat kernel of the distinguished Laplacian. In Section~\ref{sec: 5} we complete the proof of Theorem~\ref{teo:main} by obtaining the asymptotic expansions in~(3), and in the last Section~\ref{sec: 7} we provide two applications of our results.

\section{Radial derivatives}\label{sec: 2}
    
The aim of this section is to prove Theorem~\ref{teo:main}~(1) and its optimality. All throughout the paper, we shall denote by $\Rs$ the differential operator
\[
\Rs= \frac{1}{\sinh r }\frac{\partial}{\partial r}
\]
on $C^{\infty}((0, +\infty))$, and for $a,b\in \N$, by $p_{a,b}$ the function on $(0,\infty)$ given by $p_{a,b}(r) = \sinh^{a}r \cosh^{b}r$. Observe that for $a,b\geq 1$
\[
\frac{\partial}{\partial r} p_{a,b} = a p_{a-1,b+1} + b p_{a+1,b-1}.
\]
We begin with a lemma. As a general rule in the paper, an empty sum (i.e., a sum for which there is no index satisfying the given conditions) is supposed to be zero. For any positive $\alpha$, we denote by $[\alpha]$ its integer part. 

\begin{lemma}\label{lem: radial derivates} Suppose $k\in \N$. There exist non-negative real numbers  $b_{\ell,j}=b_{\ell,j}(k)$ and $d_{\ell,j}=d_{\ell,j}(k)$,  for $j=1,\dots, k$ and $\ell= 1, \dots, [k/2]+1$, such that:
\begin{itemize}
    \item[i)] if $k $ is odd, then
\begin{align*}
  \frac{\partial^k}{\partial r^k}&=\sum_{j=1}^{(k+1)/2} \bigg(\sum_{1\leq 2\ell+1 \le j} d_{\ell,j} \, p_{2\ell+1, j- 2\ell-1} \bigg)\Rs^{j}\\ & \qquad \qquad \qquad + \sum_{j =(k+3)/2}^{k} \!\!\! \bigg(\sum_{0\le 2\ell \le k-j} b_{\ell,j} \, p_{2j-k+2\ell, k-j-2\ell } \bigg) \Rs^{j};
\end{align*}
    \item[ii)] if $k$ is even, then
\begin{align*}
     \frac{\partial^k}{\partial r^k}&= \sum_{j=1}^{k/2} \bigg( \sum_{0\le 2\ell\le j} d_{\ell, j} \, p_{2\ell, j-2\ell} \bigg)\Rs^{j} + \sum_{j=\frac{k}{2}+1}^{k} \!\! \bigg(\sum_{0\le 2\ell \le k-j} b_{\ell,j} \, p_{2j-k+2\ell, k-j-2\ell} \bigg) \Rs^{j}.
\end{align*}
   
\end{itemize}
\end{lemma}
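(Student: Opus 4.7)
The plan is to argue by induction on $k$, using a single Leibniz-type propagation rule for symbols of the form $p_{a,b}\,\Rs^{j}$.

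First, I would establish the key commutation identity
\[
\frac{\partial}{\partial r}\Rs = p_{1,0}\,\Rs^{2},
\]
by direct computation: applying both sides to a test function $f$ gives $-\frac{\cosh r}{\sinh^{2} r}f'+\frac{1}{\sinh r}f''$. Composing on the right with $\Rs^{j-1}$ yields $\frac{\partial}{\partial r}\Rs^{j}=p_{1,0}\,\Rs^{j+1}$ for every $j\ge 1$. Combined with the elementary formula $\frac{\partial}{\partial r}p_{a,b}=a\,p_{a-1,b+1}+b\,p_{a+1,b-1}$, this yields the \emph{propagation rule}
\begin{equation}\label{proprule}
\frac{\partial}{\partial r}\big(p_{a,b}\,\Rs^{j}\big)=a\,p_{a-1,b+1}\,\Rs^{j}+b\,p_{a+1,b-1}\,\Rs^{j}+p_{a+1,b}\,\Rs^{j+1}.
\end{equation}
All three coefficients are non-negative, so if the statement holds at level $k$ with non-negative coefficients, the coefficients obtained at level $k+1$ will automatically be non-negative.

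The induction is based on two invariants that one checks are preserved by \eqref{proprule}:
\begin{itemize}
\item[(I1)] every term $p_{a,b}\,\Rs^{j}$ appearing in $\partial_{r}^{k}$ satisfies $a+b=j$;
\item[(I2)] the parity of $a$ equals the parity of $k$.
\end{itemize}
The base case $k=1$ is $\frac{\partial}{\partial r}=p_{1,0}\,\Rs$, which matches the odd formula with $d_{0,1}(1)=1$. For the inductive step one applies \eqref{proprule} to each summand of the formula at level $k$. The operation $(a,b,j)\mapsto (a-1,b+1,j),(a+1,b-1,j),(a+1,b,j+1)$ flips the parity of $a$, preserves $a+b=j$ in the first two cases and upgrades it to $a+b=j+1$ in the third, matching (I1)–(I2) for $k+1$.

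The heart of the argument – and what I expect to be the main obstacle – is the bookkeeping of the index ranges, i.e.\ verifying that contributions at level $k+1$ really land in the ``small $j$'' sum when $j\le \lceil (k+1)/2\rceil$ and in the ``large $j$'' sum otherwise. Writing $s_{k}=\lfloor k/2\rfloor+1$ for the smallest ``large'' index, one checks:
\begin{itemize}
\item in the small region $j\le s_{k}-1$ the formula lets $a$ range over $[\epsilon_{k},j]$ (with $\epsilon_{k}=0$ or $1$ according to parity), and \eqref{proprule} produces terms at level $j$ with $a'\in[\epsilon_{k+1},j]$ and at level $j+1$ with $a'\in[\epsilon_{k+1}+1,j+1]$;
\item in the large region $j\ge s_{k}$ one has $a\in[2j-k,j]$, and \eqref{proprule} produces terms at level $j$ with $a'\in[2j-k-1,j]=[2j-(k+1),j]$ and at level $j+1$ with $a'\in[2j-k+1,j+1]=[2(j+1)-(k+1)-1,j+1]$, fitting the large formula for $k+1$.
\end{itemize}
The only subtle point is the boundary level $j=s_{k+1}$: contributions arrive both from the old small region (via the $\Rs^{j+1}$-term with old $j=s_{k+1}-1$) and from the old large region (via the $\Rs^{j}$-terms with old $j=s_{k+1}$), and one has to check that their combined $a'$-range is exactly $[\epsilon_{k+1},s_{k+1}]$, i.e.\ the range prescribed by the small formula for $k+1$. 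Once this boundary match is verified, together with the straightforward interior check, the inductive step is complete and the lemma follows, with the coefficients $d_{\ell,j}(k),b_{\ell,j}(k)$ defined recursively through \eqref{proprule}.
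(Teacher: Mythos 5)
Your argument is essentially the same one the paper has in mind: the key observation $\frac{\partial}{\partial r}\Rs^{j}=\sinh r\,\Rs^{j+1}$ (your propagation rule is just this combined with $\frac{\partial}{\partial r}p_{a,b}=a\,p_{a-1,b+1}+b\,p_{a+1,b-1}$), the invariants $a+b=j$ and $a\equiv k\pmod 2$, and then induction; the paper itself declares this induction ``tedious, but elementary'' and omits the details. One small slip in your bookkeeping: your threshold should be $s_k=\lceil k/2\rceil+1$, not $\lfloor k/2\rfloor+1$ (for $k$ odd the small region runs up to $(k+1)/2$ and the large region starts at $(k+3)/2$, so $\lfloor k/2\rfloor+1=(k+1)/2$ is still a ``small'' index), and in the last displayed range the identity should read $[2j-k+1,\,j+1]=[2(j+1)-(k+1),\,j+1]$, without the extra $-1$; neither typo affects the validity of the approach.
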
 
\begin{proof}[Proof of Lemma \ref{lem: radial derivates}]
With the straightforward observation that $ \frac{\partial}{\partial r} \Rs^k= \sinh r \Rs^{k+1}$ for all integers $k\geq 0$, the cases $k=1,2$ can be easily checked. Thus $i)$ and $ii)$ hold in these cases. The remaining part is a tedious, but elementary, proof by induction, and we omit the details.
\end{proof} 

The key ingredient to prove Theorem~\ref{teo:main}~(1) is the following lemma.  We also note that combined with~\eqref{boundsht} it provides a direct proof of~\eqref{boundsgradht}.
\begin{lemma}\label{lem: heatderapprox}
Suppose $k \in \mathbb N$. Then
\[
    (-1)^k \Rs^k h_t(r) \approx\e^{-k r} \bigg(1+\frac{1+r}{t}\bigg)^k h_t(r), \qquad r,t>0.
\]
\end{lemma}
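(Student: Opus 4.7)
The plan is to use the explicit formulas~\eqref{heatkerneleven}--\eqref{heatkernelodd} to reduce the lemma to a two-sided estimate on iterated derivatives of the Gaussian $\psi(r,t):=\e^{-r^2/(4t)}$. In the even-$\nu$ case one has immediately
\[
(-1)^k\Rs^k h_t(r)=\mathfrak c_0 t^{-1/2}\e^{-Q^2 t/4}(-\Rs)^{k+\nu/2}(-\Rs_{1/2})^{\mu/2}\psi,
\]
where $\Rs_{1/2}:=(\sinh(r/2))^{-1}\partial_r$. For odd $\nu$ one pulls $\Rs^k$ through the outer integral in~\eqref{heatkernelodd} after a substitution (e.g.\ $u=\cosh s-\cosh r$) regularising the $(\cosh s-\cosh r)^{-1/2}$ singularity at $s=r$, reducing the task to an integrated version of the same kind of estimate on the integrand.

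The core claim is the structural asymptotic
\[
(-\Rs)^a(-\Rs_{1/2})^b\psi = P_{a,b}(r,t)\,\psi, \qquad P_{a,b}(r,t)\asymp \frac{1+r}{t}\Big(1+\frac{1+r}{t}\Big)^{a+b-1}\e^{-(a+b/2)r},
\]
for all integers $a,b\geq 0$ with $a+b\geq 1$ and all $r,t>0$. This is consistent with~\eqref{boundsht} in the special case $(a,b)=(\nu/2,\mu/2)$, and once established, the lemma follows at once by taking ratios:
\[
\frac{(-1)^k\Rs^k h_t(r)}{h_t(r)}=\frac{P_{k+\nu/2,\mu/2}(r,t)}{P_{\nu/2,\mu/2}(r,t)}\asymp \e^{-kr}\Big(1+\frac{1+r}{t}\Big)^k.
\]

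I would prove the structural claim by induction on $a+b$ using the Leibniz-type identity
\[
(-\Rs)(f\psi)=\Big(-\Rs f+\frac{r}{2t\sinh r}f\Big)\psi
\]
(and the analogue for $-\Rs_{1/2}$, which follows from $\partial_r\psi=-\tfrac{r}{2t}\psi$). The induction splits into two regimes: as $r\to 0^{+}$, $P_{a,b}(r,t)$ is a polynomial in $1/(2t)$ of degree $a+b$ whose coefficients are verified inductively to be non-negative, yielding the small-$r$ asymptotic; for $r>1$, each further application of $-\Rs$ (resp.\ $-\Rs_{1/2}$) contributes a factor of $(\sinh r)^{-1}\asymp\e^{-r}$ (resp.\ $\e^{-r/2}$) and a factor $\asymp(1+r/t)$ coming from the radial derivative of the Gaussian, accounting for both the exponential and the polynomial parts of the target asymptotic.

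The principal obstacle is the lower bound. Upper bounds follow straightforwardly from the Leibniz recursion, but the lower bound demands ruling out destructive cancellations between the two contributions on the right-hand side; concretely, one must establish non-negativity of the coefficients of $P_{a,b}(0,t)$ as a polynomial in $1/t$, and a uniform lower bound for the polynomial in $r/t$ at large $r$. The intermediate regime $r\asymp 1$ is then handled automatically, since both asymptotics are comparable there to constants, and the odd-$\nu$ case goes through by the same argument modulo routine integral estimates on~\eqref{heatkernelodd}.
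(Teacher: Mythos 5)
Your plan tracks the paper's proof: the ``core claim'' you isolate is precisely \eqref{Rpqest}, which is \cite[Proposition~5.22]{ADY}, and the paper simply cites it rather than re-deriving it, so your inductive sketch (verifying sign patterns of the coefficients and the large-$r$ contributions) is unnecessary work, though plausible. For even $\nu$ the deduction is then immediate and identical to the paper's. For odd $\nu$, your commutation step is a legitimate variant of what the paper does: after the change of variables $u=\cosh s-\cosh r$ the chain rule gives $\partial_r s(u,r)=\sinh r/\sinh s(u,r)$, so differentiating under the integral shows $-\Rs$ passes to $-\Rs$ applied to the integrand, i.e.\ $-\Rs f_0=f_1$; the paper reaches the same identity via the substitution $s=r+u$ followed by an integration by parts. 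The one place your write-up is genuinely loose is the final ``routine integral estimates'' for odd $\nu$: after substituting the two-sided Gaussian bound into the Abel-type integral $\int_r^\infty z_r(s)\,(\cdots)\,ds$, one still needs a two-sided estimate for that integral, and this is not routine --- it is \cite[Proposition~5.26]{ADY}, which the paper invokes. You should either cite it as the paper does or acknowledge it as a separate nontrivial ingredient rather than folding it into ``routine''.
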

\begin{proof}
We distinguish the cases when $\nu$ is even or odd. In the first case, the result follows by~\eqref{heatkerneleven} and~\cite[Proposition 5.22]{ADY}, according to which for $r>0$ and $p,q\in \N$
\begin{equation}\label{Rpqest}
(-\Rs)^{q} \bigg(-\frac{1}{\sinh r/2} \frac{\partial}{\partial r}\bigg)^{p}\e^{-\frac{r^2}{4t}} \approx \frac{1+r}{t} \bigg(1+ \frac{1+r}{t}\bigg)^{p+q-1}\e^{-(\frac{p}{2}+q)r -\frac{r^2}{4t}}.
\end{equation}
Suppose then that $\nu$ is odd. By~\eqref{heatkernelodd}
\begin{equation}\label{htf0}
\begin{split}
 h_t(r)&= \mathfrak{c}_{0} \pi^{-\frac{1}{2}} \, t^{-\frac{1}{2}}\e^{-\frac{Q^2}{4}t} \int_r^\infty z_r(s) g_0(s) \ ds
    \end{split}
    \end{equation}
where, for any $k \ge 0$, we denote the functions (the function $w_{r}$ will be of use later on)
    \begin{align*}  
       g_k(s)&= \bigg(-\frac{1}{\sinh s} \frac{\partial}{\partial s}\bigg)^{\frac{\nu+1}{2}+k}\bigg(-\frac{1}{\sinh s/2} \frac{\partial}{\partial s}\bigg)^{\frac{\mu}{2}}\e^{-\frac{s^2}{4t}}, \\ 
       z_r(s)&=\frac{\sinh s}{\sqrt{\cosh s-\cosh r}}, \qquad w_r(s)=\frac{1}{\sqrt{\cosh (s)-\cosh r}}.
    \end{align*}  
Given
\[
        f_k(r)= \int_{r}^{\infty} z_{r}(s) g_{k}(s) \, ds  =  \int_0^\infty z_r(r+u)g_k(r+u) \ du,
\]
we shall prove that
\begin{align}\label{rfh}
        -\Rs f_k(r)= f_{k+1}(r).
    \end{align}
Assuming~\eqref{rfh} for a moment, we complete the proof. Indeed, by~\eqref{htf0} and~\eqref{rfh}, one gets
\begin{align}\label{Rkht}
 (-1)^k \Rs^k h_t(r) =  \mathfrak{c}_{0} \pi^{-\frac{1}{2}} \, t^{-\frac{1}{2}}\e^{-\frac{Q^2}{4}t} (-1)^k \Rs^k f_{0}(r) =  \mathfrak{c}_{0} \pi^{-\frac{1}{2}} \, t^{-\frac{1}{2}}\e^{-\frac{Q^2}{4}t}  f_{k}(r).
\end{align}
By~\eqref{Rpqest} and\cite[Proposition 5.26]{ADY}, we obtain
    \begin{align*}
       f_{k}(r)&\approx  \int_r^\infty z_r(s)\frac{1+s}{t}\bigg(1+\frac{1+s}{t}\bigg)^{\frac{\nu+1}{2}+k+\frac{\mu}{2}-1}\e^{-(\frac{\mu}{4}+\frac{\nu+1}{2}+k) s- \frac{s^2}{4t}} \ ds \\ 
        &\approx \frac{(1+r)}{t} \bigg(1+ \frac{1+r}{t}\bigg)^{\frac{\nu+1}{2}+k+\frac{\mu}{2}-\frac{3}{2}}\e^{-(\frac{\mu}{4}+\frac{\nu+1}{2}+k-\frac{1}{2})r}\e^{-\frac{r^2}{4t}},
    \end{align*} so that     \begin{align*}
      t^{-\frac{1}{2}}\e^{-\frac{Q^2}{4}t}  f_{k}(r) \approx \e^{-kr} \bigg(1+ \frac{1+r}{t}\bigg)^k h_t(r),
    \end{align*}
   which completes the proof. We are then left with proving the claim~\eqref{rfh}.
 
 To show this, observe first that
 \[
  \frac{\partial}{\partial r} z_r(r+u)= \frac{\partial}{\partial u} z_r(r+u)  - \sinh r \frac{\partial}{\partial u}   w_r(r+u), \quad \frac{\partial}{\partial r}g_k(r+u)= \frac{\partial}{\partial u}g_k(r+u).
  \]
Then
 \begin{align*}
      \frac{\partial}{\partial r}  f_{k}(r)    
      & =\int_0^\infty \frac{\partial}{\partial r}\bigg( z_r(r+u) g_k(r+u)\bigg) \ du\\ 
      &=  \int_0^\infty \frac{\partial}{\partial r}\big( z_r(r+u)\big) g_k(r+u) \ du + \int_0^\infty z_r(r+u) \frac{\partial}{\partial u}g_k(r+u) \ du \\ 
         &=\int_0^\infty \frac{\partial}{\partial u} \bigg( z_r(r+u)-\sinh r \, w_r(r+u) \bigg) g_k(r+u) \ du \\
         & \qquad + \int_0^\infty z_r(r+u) \frac{\partial}{\partial u} g_k(r+u)\ du.
         \end{align*}
        We now integrate by parts in the first integral, and this gives
         \begin{align*}
  \frac{\partial}{\partial r}  f_{k}(r)           &=\sinh r \int_0^\infty w_r(r+u) \frac{\partial}{\partial u}g_k(r+u) \ du \\ 
         &=-\sinh r \int_0^\infty w_r(r+u)\sinh(r+u) g_{k+1}(r+u) \ du \\ 
         &=-\sinh r  \int_0^\infty z_r(r+u) g_{k+1}(r+u)\ du,
    \end{align*}
     as desired.
\end{proof}
We are now ready to prove Theorem~\ref{teo:main}~(1). Before that, we observe once and for all that for $t,r>0$
\begin{equation}\label{equivaAnk}
 1+ \frac{1}{\sqrt{t}} + \frac{r}{t}  \approx
  \begin{cases}
 1+ \frac{1+r}{t}     \qquad &\mbox{if } r>1,\\
   \max \Big[1, r^2\Big(1+ \frac{1+r}{t}\Big)\Big]^{\frac{1}{2}} \Big(1+\frac{1+r}{t}\Big)^{\frac{1}{2}} &\mbox{if } r\leq 1.
  \end{cases}
\end{equation}
This provides an equivalent form of $\Psi_{k}$ and $\tilde \Psi_{k}$ which, though more involved and apparently overcomplicated, will be more practical now and in the following sections. In particular
\[
\Psi_{k}(r,t)\approx \tilde \Psi_{k}(r,t)\approx \bigg(1+ \frac{1+r}{t}\bigg)^k  \qquad \text{if $r >1$},
\]
while, when $r\leq 1$,
\begin{align*}
\Psi_{k}(r,t)&\approx
 \begin{cases} 
 r \max \Big[1, r^2\Big(1+ \frac{1+r}{t}\Big)\Big]^{\frac{k-1}{2}} \Big(1+\frac{1+r}{t}\Big)^{\frac{k+1}{2}} &\text{if $k$ odd,}\\
 \max\Big[1, r^2\Big(1+\frac{1+r}{t}\Big)\Big]^{\frac{k}{2}}  \Big(1+ \frac{1+r}{t}\Big)^{\frac{k}{2}}  &\text{if $k$ even,}           
 \end{cases}
\end{align*}
and
\begin{align*}
\tilde \Psi_{k}(r,t)& \approx 
 \begin{cases} 
  \max \Big[1, r^2\Big(1+ \frac{1+r}{t}\Big)\Big]^{\frac{k-1}{2}} \Big(1+\frac{1+r}{t}\Big)^{\frac{k-1}{2}}   \Big[r\Big(1+\frac{1+r}{t}\Big)+1 \Big] &\text{if $k$ odd,}\\
 \max\Big[1, r^2\Big(1+\frac{1+r}{t}\Big)\Big]^{\frac{k}{2}}  \Big(1+ \frac{1+r}{t}\Big)^{\frac{k}{2}}  &\text{if $k$ even.}           
 \end{cases}
\end{align*}
The proof of~\eqref{equivaAnk} is elementary and left to the reader.
 \begin{proof}[Proof of Theorem \ref{teo:main}~(1)]
    If $k=1$, the estimate is nothing but~\eqref{boundder}. 
    
    Thus, let $k\geq 3$ be an odd number. By  combining Lemmas~\ref{lem: radial derivates} and~\ref{lem: heatderapprox}, we get
   \begin{equation}\label{h-deriv}
   \begin{split}
  \bigg|\frac{\partial^k}{\partial r^k}  h_t(r)\bigg|
  & \lesssim  h_t(r)  \Bigg[ \sum_{j=1}^{(k+1)/2}   \sum_{1\leq  2\ell+1 \le j} p_{2\ell+1, j- 2\ell-1}  \, \e^{-jr}\bigg(1+\frac{1+r}{t}\bigg)^{j}\\ &\qquad + \sum_{j =(k+3)/2}^{k}  \sum_{0\leq 2\ell \le k-j}  p_{2j-k+2\ell, k-j-2\ell} \, \e^{-jr}\bigg(1+\frac{1+r}{t}\bigg)^{j}  \Bigg].
   \end{split}
   \end{equation}

Suppose now $r >1.$ Then, since $\cosh r \approx \sinh r \asymp \e^r$ and hence $p_{a,b}(r) \approx \e^{(a+b)r}$ for all $a,b\in\N$, we obtain 
\begin{align*}
     \bigg|\frac{\partial^k}{\partial r^k}  h_t(r)\bigg|
     & \lesssim  h_t(r) \sum_{j=1}^{k}   \bigg(1+\frac{1+r}{t}\bigg)^{j}  \lesssim h_t(r)\bigg(1+\frac{1+r}{t}\bigg)^{k},
\end{align*}
as desired.

Suppose now $r<1.$ In this case $\cosh r \approx 1$ while $ \sinh r \asymp r$, whence $p_{a,b}(r) \approx r^{a}$. By \eqref{h-deriv}, we obtain 
\begin{align*}
     \bigg|\frac{\partial^k}{\partial r^k}  h_t(r)\bigg| \lesssim h_t(r) q_{k}(r)
     \end{align*}
     where $q_{k}(r)$ is given by
   \begin{align*}
\sum_{j=1}^{(k+1)/2}   \sum_{1\le 2\ell+1\le j} & r^{2\ell+1}  \bigg(1+\frac{1+r}{t}\bigg)^{j}+ \sum_{j =(k+3)/2}^{k}  \sum_{0\le 2\ell\le k-j}  r^{2j-k+2\ell} \bigg(1+\frac{1+r}{t}\bigg)^{j}  \\
& \lesssim \sum_{j=1}^{(k+1)/2}  r \bigg(1+\frac{1+r}{t}\bigg)^{j}+ \sum_{j =(k+3)/2}^{k}  r^{2j-k} \bigg(1+\frac{1+r}{t}\bigg)^{j},
   \end{align*}
    so that
\begin{equation} \label{qkr}
  q_{k}(r)  \lesssim r\bigg(1+\frac{1+r}{t}\bigg)^{\frac{k+1}{2}} + \begin{cases}
  r^{k}\Big(1+\frac{1+r}{t}\Big)^{k}  &\text{if $r^2\Big(1+\frac{1+r}{t}\Big) \ge 1$,} \\ 
    r^{3}\Big(1+\frac{1+r}{t}\Big)^{\frac{k+3}{2}} &\text{otherwise.}
    \end{cases}
\end{equation}
Depending now on $r^2(1+\frac{1+r}{t})$ being smaller or larger than $1$, there is always a leading term in the sum in~\eqref{qkr}. One gets 
 \begin{align*}
   q_{k}(r)& \lesssim \begin{cases}r^{k}\Big(1+\frac{1+r}{t}\Big)^{k} &\text{if $r^2\Big(1+\frac{1+r}{t}\Big) \ge 1$,} \\ 
    r\Big(1+\frac{1+r}{t}\Big)^{\frac{k+1}{2}} &\text{otherwise.}\end{cases}
    \end{align*}
This proves the statement when $k$ is odd.

The case when $k$ is even is similar, and we omit few details. Let $ k\geq 2$ be an even number. By combining Lemma \ref{lem: radial derivates} with Lemma \ref{lem: heatderapprox}, we get 
\begin{equation*}
\begin{split}
 \bigg|\frac{\partial^k}{\partial r^k} h_t(r)\bigg|& \lesssim   h_t(r) \Bigg[\sum_{j=1}^{k/2} \sum_{0\le 2\ell\le j} p_{2\ell, j-2\ell} \bigg(1+ \frac{1+r}{t}\bigg)^{j}\e^{-jr} \\
 & \qquad \qquad +  \sum_{j=\frac{k}{2}+1}^{k} \bigg(\sum_{0\le 2\ell\le k-j}   p_{2j-k+2\ell, k-j-2\ell} \bigg) \bigg(1+ \frac{1+r}{t}\bigg)^{j}\e^{-j r}\Bigg].
\end{split}
\end{equation*}
If $r > 1$, then
\begin{align*}
   \bigg|\frac{\partial^k}{\partial r^k} h_t(r)\bigg| &\lesssim \bigg( 1+ \frac{1+r}{t}\bigg)^{k}h_t(r),
\end{align*} as desired. If instead $r \le 1$, then by arguing as above one gets 
\begin{align*}
     \bigg|\frac{\partial^k}{\partial r^k} h_t(r)\bigg| &\lesssim h_t(r) \Bigg[\sum_{j=1}^{k/2} \bigg(1+ \frac{1+r}{t}\bigg)^{j} +  \sum_{j=\frac{k}{2}+1}^{k} r^{2j-k} \bigg(1+ \frac{1+r}{t}\bigg)^{j} \Bigg] \\ 
     &\lesssim h_t(r) \times  \begin{cases}
     r^k \Big(1+\frac{1+r}{t}\Big)^k  &\text{if $r^2\Big(1+ \frac{1+r}{t}\Big)> 1$,} \\ 
       \Big(1+ \frac{1+r}{t}\Big)^{k/2}   &\text{if $r^2\Big(1+ \frac{1+r}{t}\Big)\le 1$.}
     \end{cases}
\end{align*}
This concludes the proof.
   \end{proof}

    \subsection{Optimality}
   In this section we prove that in certain regions the estimates of Theorem~\ref{teo:main}~(1) are optimal. We begin by observing that a simplified version of Lemma \ref{lem: radial derivates} is that, for any $k \ge 1$, 
\begin{equation}\label{fjh1}
\frac{\partial^k}{\partial r^k}=\sum_{j=1}^k f_{j,k}(r) \Rs^j,
\end{equation}
    for certain functions $f_{j,k}$ satisfying 
\begin{equation}\label{fjh2}
f_{k,k}(r)=\sinh^k r, \qquad \frac{\partial^m}{\partial r^m} f_{j,k}(r) \geq 0, \qquad 0\le f_{j,k}\lesssim \e^{j} \mbox{ for } r\geq 1,
\end{equation}
   for all $1 \le j \le k$ and $m \in \mathbb N$. 

In order to detect the leading term among those appearing in the radial derivatives of $h_t$ by Lemma~\ref{lem: radial derivates}, especially when $r\leq 1$, we need to show that certain coefficients do not vanish. This leads us to the following proposition. We maintain the notation of~\eqref{fjh1}. 

\begin{proposition}\label{prop: sharp}
    Suppose $k\in \N$. Then there exist constants $c_{k}>0$ and $d_{\ell}=d_{\ell}(k) \ge 0$, $\ell = 1,\dots , \big[ \frac{k}{4}\big]$, such that
       \begin{itemize}
       \item[i)]  if $k$ is odd, then 
    \[
    f_{\frac{k+1}{2},k}(r)=c_{k}\sinh r \cosh^{\frac{k-1}{2}} r+ {\sum_{1\le 2\ell+1\le (k+1)/2} d_{\ell} p_{2\ell+1,\frac{k+1}{2}-2\ell-1}(r)};
    \] 
    \item[ii)] if $k$ is even, then 
    \[
    f_{\frac{k}{2},k}(r)=c_{k}\cosh^{\frac{k}{2}}r {+\sum_{0\le  2\ell \le k/2} d_{\ell} p_{2\ell,\frac{k}{2}-2\ell}(r)}.
    \]
        \end{itemize}
     \end{proposition}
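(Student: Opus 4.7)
The plan is to read off the structural form asserted in $i)$ and $ii)$ directly from Lemma~\ref{lem: radial derivates} applied at the middle index $j = \lceil k/2 \rceil$, the non-negativity of the coefficients being built into that lemma. The actual content of the proposition is the strict positivity of the coefficient $c_k$ of the extremal monomial $p_{1, (k-1)/2}$ (odd $k$) or $p_{0, k/2}$ (even $k$) inside $f_{\lceil k/2 \rceil, k}$, which I will establish by induction on $k$. The inductive engine is the recursion
\[
f_{j, k+1} = \partial_r f_{j, k} + \sinh r \cdot f_{j-1, k},
\]
which follows from~\eqref{fjh1} together with $\partial_r \mathcal{R}^j = \sinh r \, \mathcal{R}^{j+1}$.

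The base cases $k = 1, 2$ are immediate: direct computation gives $\partial/\partial r = \sinh r \cdot \mathcal{R}$ and $\partial^2/\partial r^2 = \cosh r \cdot \mathcal{R} + \sinh^2 r \cdot \mathcal{R}^2$, so $c_1 = c_2 = 1$. For the inductive step, two parity cases must be handled separately. When $k$ is odd, the middle index at level $k+1$ equals that at $k$, namely $(k+1)/2$, and I track the coefficient of $p_{0, (k+1)/2}$ in $f_{(k+1)/2, k+1}$: the leading term $c_k\, p_{1, (k-1)/2}$ of $f_{(k+1)/2, k}$ differentiates to $c_k\, p_{0, (k+1)/2} + \tfrac{k-1}{2}\, c_k\, p_{2, (k-3)/2}$, contributing exactly $c_k$; every other monomial $p_{2\ell+1, \cdot}$ appearing in $f_{(k+1)/2, k}$ has $\ell \geq 1$ and thus differentiates into combinations of $p_{2\ell, \cdot}$ and $p_{2\ell+2, \cdot}$ with first index $\geq 2$; finally, $\sinh r \cdot f_{(k-1)/2, k}$ consists solely of $p_{2\ell+2, \cdot}$ terms, again with first index $\geq 2$. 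Hence $c_{k+1} = c_k$.

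When $k$ is even the middle index rises from $k/2$ to $k/2 + 1$, and I track the coefficient of $p_{1, k/2}$ in $f_{k/2+1, k+1}$. The product $\sinh r \cdot c_k\, p_{0, k/2}$ contributes exactly $c_k$, while all other terms $p_{2\ell, \cdot}$ ($\ell \geq 1$) of $f_{k/2, k}$ produce $p_{2\ell+1, \cdot}$ with first index $\geq 3$; separately, Lemma~\ref{lem: radial derivates} (second sum, at $j = k/2+1$) writes $f_{k/2+1, k}$ as a non-negative combination of $p_{2\ell+2, k/2-1-2\ell}$, whose $\ell = 0$ term $b_{0, k/2+1}(k)\, p_{2, k/2-1}$ differentiates to contribute $2\, b_{0, k/2+1}(k) \geq 0$ to $p_{1, k/2}$, the remaining ($\ell \geq 1$) terms again producing monomials with first index $\geq 3$. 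Thus $c_{k+1} = c_k + 2\, b_{0, k/2+1}(k) \geq c_k$. Chaining the two cases yields $c_k \geq c_{k-1} \geq \cdots \geq c_1 = 1 > 0$, completing the induction.

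The only real obstacle is the bookkeeping of which monomials $p_{a, b}$ can land on the targeted extremal term, but this is completely controlled by parity of the first index combined with the rules $\partial_r p_{a, b} = a\, p_{a-1, b+1} + b\, p_{a+1, b-1}$ and $\sinh r \cdot p_{a, b} = p_{a+1, b}$, which shift $a$ by at most one unit. Crucially, since every coefficient in Lemma~\ref{lem: radial derivates} is non-negative, no cancellations can occur and the argument is purely additive, so the strict positivity established at the base case persists indefinitely.
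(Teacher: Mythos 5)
Your proof is correct and follows essentially the same route as the paper's: both arguments proceed by induction on $k$ via the recursion $f_{j,k+1}=\partial_{r} f_{j,k}+\sinh r\, f_{j-1,k}$ (derived from $\partial_r \mathcal{R}^j = \sinh r\,\mathcal{R}^{j+1}$) together with Lemma~\ref{lem: radial derivates} to control which $p_{a,b}$ can arise, using non-negativity to rule out cancellation of the extremal term. The only cosmetic difference is that you single out and track the coefficient of the extremal monomial, whereas the paper writes out the full expansion of $f_{\lceil (k+1)/2\rceil, k+1}$ and reads off the structure; the substance is identical.
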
 
   
   \begin{proof}
    We proceed by induction. The case $k=1$ is obvious; since 
    \[
    \frac{\partial^2}{\partial r^2}=\cosh r \Rs+\sinh^2 \Rs^2, \qquad \frac{\partial^3}{\partial r^3}=\sinh r+3\sinh r\cosh r \Rs^2+\sinh^3 \Rs^3,
    \] 
one has $f_{1,2}(r)=\cosh r$ and $f_{2,3}(r)=3\sinh r\cosh r$ which satisfy $ii)$ and $i)$. Now assume that 
    $ii)$ holds for some even integer $k \in \mathbb N$. By differentiating
    \begin{align*}
        \frac{\partial}{\partial r}&\sum_{j=1}^k f_{j,k}(r) \Rs^j
        =\sum_{j=1}^{k} \bigg(\frac{d}{d r} f_{j,k}(r) \bigg) \Rs^j + \sum_{j=1}^k f_{j,k}(r)\sinh r \, \Rs^{j+1} \\ 
        &= \bigg(\frac{d }{d r}f_{1,k} (r) \bigg) \Rs+\sum_{j=2}^{k} \bigg( \frac{d }{d r}f_{j,k}(r)+\sinh r f_{j-1,k}(r)\bigg) \Rs^j +\sinh^{k+1} r \, \Rs^{k+1}.
    \end{align*}
    Then, by  Lemma \ref{lem: radial derivates} $ii)$,
\begin{align*}
    f_{\frac{k}{2}+1,k+1}(r) 
    &= \frac{d}{d r}f_{\frac{k}{2}+1,k}(r)+ \sinh r f_{\frac{k}{2},k}(r)  \\ 
    &=\sum_{1\leq 2\ell+1\le \frac{k}{2}+1} \tilde{d}_{\ell} p_{2\ell+1,\frac{k}{2}-2\ell}(r)+\sum_{3\leq 2\ell+3\le \frac{k}{2}+1} \tilde{d}_{\ell} p_{2\ell+3,\frac{k}{2}-2\ell-2}(r)\\&\qquad +c_{k}\sinh r\cosh^{\frac{k}{2}}r+ \sum_{0 \leq 2\ell+1 \le \frac{k}{2}+1} d_{\ell} p_{2\ell+1,\frac{k}{2}-2\ell}(r),
\end{align*}
    from which the conclusion follows. The proof of $ii)$ follows the same reasoning; if $i)$ holds for some odd integer $k$, then by Lemma~\ref{lem: radial derivates} $i)$
    \begin{align*}
    f_{\frac{k+1}{2},k+1}(r)
   &=\frac{d}{d r}f_{\frac{k+1}{2},k}(r)+ \sinh r f_{\frac{k-1}{2},k}(r)\\ 
    &=c_{k} \cosh ^{\frac{k+1}{2}}(r)+c_{k}\frac{k-1}{2}\sinh^2 r\cosh^{\frac{k-3}{2}}r\\& \qquad + \sum_{2 \leq 2\ell \le \frac{k-1}{2}} d_{\ell} p_{2\ell,\frac{k+1}{2}-2\ell}(r)+ \sum_{0\le 2\ell \le \frac{k-3}{2}} d_{\ell} p_{2(\ell+1),\frac{k+1}{2}-2(\ell-1)}(r)\\ & \qquad +\sum_{2 \leq 2(\ell+1)\le \frac{k+1}{2}}d_\ell p_{2(\ell+1),(k+1)/2-2(\ell+1)}(r),
    \end{align*} 
    as desired.
    \end{proof}

We are now ready to prove that, in certain regimes, Theorem~\ref{teo:main}~(1) is optimal.

 \begin{proposition}\label{prop: sharpness} Suppose $k\in \N$ and $\alpha \in [1,2)$. There exists a constant $\gamma_{k,\alpha}>0$ such that, if $(1+r)/t \ge \gamma_{k,\alpha}$, then
\begin{equation}\label{stimabasso}
  \bigg| \frac{\partial^k}{\partial r^k} h_t(r)\bigg| \gtrsim \Psi_{k}(r,t) h_{t}(r)
\end{equation}
 when either  $ r>1$ or $r^{\alpha}\big(1+\frac{1+r}{t}\big)<1$.
  \end{proposition}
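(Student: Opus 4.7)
The plan is to leverage the decomposition \eqref{fjh1}, $\frac{\partial^k}{\partial r^k}= \sum_{j=1}^k f_{j,k}(r)\, \Rs^j$, together with Lemma~\ref{lem: heatderapprox}, which provides the two-sided bound $(-1)^j \Rs^j h_t(r) \approx \e^{-jr}(1+(1+r)/t)^j\, h_t(r)$ and, in particular, guarantees $(-1)^j \Rs^j h_t(r)>0$. Since the coefficients $f_{j,k}$ are non-negative by Lemma~\ref{lem: radial derivates}, the sum expressing $\frac{\partial^k}{\partial r^k} h_t(r)$ is alternating in sign, and the triangle inequality yields, for any chosen $j^{\star}\in\{1,\dots,k\}$,
\[
\bigg|\frac{\partial^k}{\partial r^k} h_t(r)\bigg| \geq f_{j^{\star},k}(r)\, \big|\Rs^{j^{\star}} h_t(r)\big| - \sum_{j\neq j^{\star}} f_{j,k}(r)\, \big|\Rs^{j} h_t(r)\big|.
\]
The strategy will be to select $j^{\star}$ so that the first quantity is comparable to $\Psi_k(r,t)\, h_t(r)$ and to show that, provided $\gamma_{k,\alpha}$ is large enough, the total subleading contribution is absorbed by half of the leading one.

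When $r>1$, the natural choice is $j^{\star}=k$. From \eqref{fjh2} one has $f_{k,k}(r)=\sinh^k r \asymp \e^{kr}$ while $f_{j,k}(r)\lesssim \e^{jr}$ for $j<k$, and combining with Lemma~\ref{lem: heatderapprox} gives $f_{k,k}(r)\, |\Rs^{k} h_t(r)| \approx (1+(1+r)/t)^k\, h_t(r)\approx \Psi_k(r,t)\, h_t(r)$, whereas each subleading term is $\lesssim(1+(1+r)/t)^j\, h_t(r)$, smaller than the leading one by a factor of at least $1+(1+r)/t\geq 1+\gamma_{k,\alpha}$. A sufficiently large $\gamma_{k,\alpha}$ therefore closes this case.

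When $r\leq 1$ and $r^\alpha(1+(1+r)/t)<1$, the hypothesis $(1+r)/t\geq \gamma_{k,\alpha}$ forces $r^\alpha < 1/(1+\gamma_{k,\alpha})$, so that both $r$ and $r^2(1+(1+r)/t)\leq r^{2-\alpha}\leq \gamma_{k,\alpha}^{-(2-\alpha)/\alpha}$ are arbitrarily small for $\gamma_{k,\alpha}$ large, a mechanism that breaks down precisely when $\alpha=2$. I will take $j^{\star}=(k+1)/2$ if $k$ is odd and $j^{\star}=k/2$ if $k$ is even. Proposition~\ref{prop: sharp} then supplies the lower bounds $f_{j^{\star},k}(r)\geq c_k \sinh r \cosh^{(k-1)/2}r \gtrsim r$ (odd $k$) and $f_{j^{\star},k}(r)\geq c_k \cosh^{k/2}r\gtrsim 1$ (even $k$), non-cancellable because all other contributions to $f_{j^\star,k}$ are non-negative. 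Consequently the $j^{\star}$-th term is comparable to $r(1+(1+r)/t)^{(k+1)/2}\, h_t(r)$, resp.\ $(1+(1+r)/t)^{k/2}\, h_t(r)$, which matches $\Psi_k(r,t)\, h_t(r)$ via the equivalent forms recorded after \eqref{equivaAnk}. For the remaining indices, Lemma~\ref{lem: radial derivates} combined with $p_{a,b}(r)\asymp r^a$ for $r\leq 1$ yields $f_{j,k}(r)\lesssim r$ (resp.\ $\lesssim 1$) when $j<j^{\star}$, and $f_{j,k}(r)\lesssim r^{2j-k}$ when $j>j^{\star}$. The ratio of each subleading contribution to the leading one is then bounded either by $(1+(1+r)/t)^{-(j^{\star}-j)}\leq \gamma_{k,\alpha}^{-1}$ or by $[r^2(1+(1+r)/t)]^{j-j^{\star}}\leq \gamma_{k,\alpha}^{-(2-\alpha)/\alpha}$, both of which can be made arbitrarily small.

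The main obstacle will be the bookkeeping in the small-$r$ regime: one must ensure that the dominant coefficient $f_{j^{\star},k}(r)$ does not vanish to a higher order in $r$ than expected, which is precisely what Proposition~\ref{prop: sharp} provides by isolating a strictly positive monomial among the non-negative terms comprising $f_{j^{\star},k}$. All remaining steps reduce to comparing powers of $1+(1+r)/t$ and $r$, a task made tractable by the uniform smallness of $r$ and $r^2(1+(1+r)/t)$ under the hypothesis. Summing finitely many subleading terms, each bounded by a small multiple of the leading one, produces the desired lower bound \eqref{stimabasso}.
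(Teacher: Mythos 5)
Your proposal is correct and follows essentially the same route as the paper: the decomposition $\frac{\partial^k}{\partial r^k}=\sum_{j=1}^k f_{j,k}\Rs^j$ from~\eqref{fjh1}, the choice $j^{\star}=k$ for $r>1$ and $j^{\star}=[\tfrac{k+1}{2}]$ in the small-$r$ regime, Lemma~\ref{lem: heatderapprox} for the two-sided control of $\Rs^j h_t$, Proposition~\ref{prop: sharp} to prevent cancellation in the dominant coefficient, and the upper bounds on the remaining $f_{j,k}$ from Lemma~\ref{lem: radial derivates}. The observation that the terms alternate in sign (from $f_{j,k}\geq 0$ and $(-1)^j\Rs^j h_t>0$) is correct but not actually used, since you then invoke only the reverse triangle inequality exactly as the paper does; the rest of the bookkeeping — comparing the ratio of subleading to leading term against powers of $(1+(1+r)/t)^{-1}$ and $r^2(1+(1+r)/t)\leq r^{2-\alpha}$, and noting this mechanism is exactly where $\alpha<2$ is needed — matches the paper's argument.
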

 \begin{proof}
In the proof we shall make heavy use of Lemma~\ref{lem: heatderapprox} without further mention. We first prove~\eqref{stimabasso} when $r>1$. By~\eqref{fjh1} and~\eqref{fjh2}
    \begin{align*} 
        \bigg| \frac{\partial^k}{\partial r^k} h_t(r)\bigg| 
        &\ge \sinh^k r |\Rs^k h_t(r)|-\sum_{j=1}^{k-1} |f_{j,k} \Rs^j h_t(r)| \\
        &\ge c\bigg(1 + \frac{1+r}{t}\bigg)^k h_t(r)-\sum_{j=1}^{k-1} |f_{j,k} \Rs^j h_t(r)|
    \end{align*}
    for a certain $c>0$.
Since moreover
\[
|f_{j,k}(r)\Rs^j h_t(r)| \lesssim \bigg(1 + \frac{1+r}{t}\bigg)^j h_t(r), \qquad 1 \le j \le k-1
\]
by~\eqref{fjh2},~\eqref{stimabasso} follows if $\frac{r+1}{t}$ is large enough.

Assume now that $r^{\alpha}\big(1+ \frac{1+r}{t}\big)<1$ for $\alpha \in [1,2)$. Since in particular $r<1$, one gets $r^{2}\big(1+ \frac{1+r}{t}\big)<1$, whence
\begin{align*}
   \Psi_{k}(r,t) \approx h_t(r) \times \begin{cases} r\big(1 + \frac{1+r}{t}\big)^{\frac{k+1}{2}} &\text{if $k$ is odd}, \\  \big(1 + \frac{1+r}{t}\big)^{\frac{k}{2}} &\text{if $k$ is even}.
        \end{cases}
\end{align*}
If $k$ is even, then by Proposition \ref{prop: sharp} one has that
\begin{align*}
    |f_{\frac{k}{2},k}(r)\Rs^{\frac{k}{2}} h_t(r)| \approx \bigg(1 + \frac{1+r}{t}\bigg)^{\frac{k}{2}} h_t(r),
\end{align*} while, by Lemma~\ref{lem: radial derivates} $ii)$ 
     \begin{align*}
         |f_{j,k}(r)\Rs^{j} h_t(r)| &\lesssim  r^{2j-k} |\Rs^{j} h_t(r)| \approx r^{2j-k} \bigg(1 + \frac{1+r}{t}\bigg)^{j} h_{t}(r)  &&\qquad \tfrac{k}{2}+1\le j \le k,
              \end{align*} 
         and
         \begin{align*}    
         |f_{j,k}(r)\Rs^{j} h_t(r)| &\lesssim   |\Rs^{j} h_t(r)| \approx \bigg(1 + \frac{1+r}{t}\bigg)^{j}h_{t}(r) \ \ \ \ &&\qquad  1\le j \leq \tfrac{k}{2}-1.
     \end{align*} 
     Therefore, for $1 \le j \le k$ with $j \ne k/2$,
     \begin{align*}
           |f_{j,k}(r)\Rs^{j} h_t(r)| &\lesssim \bigg(1 + \frac{1+r}{t}\bigg)^{\frac{k}{2}-\frac{2-\alpha}{\alpha}}h_{t}(r),
     \end{align*}
     from which the desired estimate follows again provided $\frac{r+1}{t}$ is large enough.
     
The case $k$ is odd is completely analogous. By Proposition \ref{prop: sharp}
        \begin{align*}
         |f_{\frac{k+1}{2},k}(r)\Rs^{\frac{k+1}{2}} h_t(r)| \approx r\bigg (1 + \frac{1+r}{t}\bigg)^{\frac{k+1}{2}} h_t(r).
     \end{align*}  
     For any $1 \le j < (k+1)/2$, by Lemma \ref{lem: radial derivates} $i)$ we get 
     \begin{align*}
          |f_{j,k}(r)\Rs^{j} h_t(r)| \lesssim r\bigg (1 + \frac{1+r}{t}\bigg)^{\frac{k-1}{2}} h_t(r),
     \end{align*}  while for any $(k+1)/2 < j \le k $
     \begin{align*}
           |f_{j,k}(r)\Rs^{j} h_t(r)|
           &\lesssim r^{2j-k}\bigg(1+\frac{1+r}{t}\bigg)^{j}h_{t}(r)\le r\bigg (1 + \frac{1+r}{t}\bigg)^{\frac{k+1}{2}-\frac{2-\alpha}{\alpha}} h_t(r),
     \end{align*} 
which imply the desired estimate also in the odd case.
     \end{proof}

\section{Derivatives of the Riemannian distance}\label{sec:distance}

It is well known, cf.~\cite[(2.18)]{ADY}, that
\[
\cosh ^2\bigg(\frac{r(x,z,a)}{2}\bigg)=\bigg(\frac{a^{1/2}+a^{-1/2}}{2}+\frac{1}{8}\,a^{-1/2}|x|^2\bigg)^2+\frac{1}{4}\,a^{-1}|z|^2,
\]
or equivalently (we often do not stress the dependence on $(x,z,a)$)
\begin{equation}\label{coshd}
\cosh r =-1+ 2\bigg(\frac{a^{1/2}+a^{-1/2}}{2}+\frac{1}{8}\,a^{-1/2}|x|^2\bigg)^2+\frac{1}{2}\,a^{-1}|z|^2.
\end{equation}
At a local scale, the Riemannian distance $r$ behaves like the Euclidean distance. This is shown in the following simple lemma.
\begin{lemma}
If $r\leq 1$, then $r\approx |a-1| +|x| + |z|$.
\end{lemma}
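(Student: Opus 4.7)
The plan is to compute $\cosh r - 1$ explicitly from~\eqref{coshd}, bound the variables using the hypothesis $r\leq 1$, and then match everything against $r^2$.

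First, I would expand~\eqref{coshd}. Using $\left(\tfrac{a^{1/2}+a^{-1/2}}{2}\right)^{2}=1+\tfrac{(a-1)^{2}}{4a}$ and the binomial expansion of the squared bracket, a direct computation yields
\[
\cosh r - 1 \;=\; \frac{(a-1)^{2}}{2a} \;+\; \frac{1+a^{-1}}{4}|x|^{2} \;+\; \frac{1}{32 a}|x|^{4} \;+\; \frac{1}{2a}|z|^{2}.
\]
All four summands on the right are non-negative, so each of them individually is bounded by $\cosh r - 1 \leq \cosh 1 - 1$.

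Next, from $\frac{(a-1)^{2}}{2a}\leq \cosh 1 - 1$ one deduces that $a$ is bounded above and below by positive constants, i.e.\ $a\asymp 1$. Consequently $\frac{(a-1)^{2}}{2a}\asymp (a-1)^{2}$, $\frac{1+a^{-1}}{4}\asymp 1$ and $\frac{1}{2a}\asymp 1$. Moreover the bound on the $|x|^{2}$ term shows $|x|\lesssim 1$, so the quartic contribution satisfies $\frac{|x|^{4}}{32 a}\lesssim |x|^{2}$ and can be absorbed. Putting this together,
\[
\cosh r - 1 \;\asymp\; (a-1)^{2}+|x|^{2}+|z|^{2}.
\]

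Finally, since $r\leq 1$ one has $\cosh r - 1 \asymp r^{2}$ by the Taylor expansion of $\cosh$, so
\[
r^{2} \;\asymp\; (a-1)^{2}+|x|^{2}+|z|^{2} \;\asymp\; \bigl(|a-1|+|x|+|z|\bigr)^{2},
\]
and taking square roots gives the claim. There is no real obstacle: the only mild care needed is in the quartic term $|x|^{4}$, but the a priori bound $|x|\lesssim 1$ obtained from $r\leq 1$ makes it harmless.
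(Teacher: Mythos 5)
Your proof is correct and follows essentially the same route as the paper: both compute $\cosh r - 1$ explicitly from~\eqref{coshd}, observe that $a\approx 1$ and $|x|\lesssim 1$ when $r\leq 1$, and compare with $\cosh r - 1 \approx r^2$. You simply spell out the absorption of the $|x|^4$ term a bit more explicitly than the paper does, but the substance is identical.
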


\begin{proof}
An elementary computation shows that
\[
\cosh r-1 = \frac{1}{2a} \bigg( (a-1)^2 + \frac{1}{16}|x|^4 +  \frac{a}{2}(1+a^{-1})|x|^{2} + |z|^2 \bigg).
\]
Since $\cosh r -1 \approx  \frac{r^2}{2}$ for $r\leq 1$, while $a \approx 1$ and $|x| \lesssim 1$  if $r\leq 1$, the conclusion follows.
\end{proof}

The aim of this section is to prove Theorem~\ref{teo:derd}. The case $k=1$ is well known, since $|\nabla r| \leq 1$. In order to obtain the other cases, the first ingredients will be the classical and a multivariate versions of Faà di Bruno's formula.

In its classical form, given a vector field $\bX \in \mathfrak{s}$, an integer $\ell\in\N$, and two compatible functions $g\colon [0,\infty) \to \R$ and $w\colon S \to [0,\infty)$, such formula states that
\begin{align}\label{Faa}
    \bX^{\ell} (g\circ w) = \ell! \sum_{j=1}^\ell  \frac{1}{j!}\bigg(\frac{\partial^{j}}{\partial r^{j}} g\bigg)(w) \sum_{m_1+\cdots +m_j=\ell} \frac{\bX^{m_1}w}{m_{1}!} \cdots \frac{\bX^{m_j}w}{m_j!},
\end{align} 
where the inner sum is meant to run over all possible integers $m_1, \dots, m_j \geq 1$ whose sum is $\ell$ (this condition will always be implicit in the above notation).  A multivariate version turns out to be much more involved, but the weaker form which we now prove will be enough to us.

Here and in the following, it will be convenient to identify a multi-index $J$ with the ordered set of its entries (where repeated elements are considered to be different). Given two multi-indices $J,I$, we write $J\subseteq I$ if there exists an injective ``order preserving'' map $\sigma \colon J \to I$, i.e.\ such that, if $j_{1} \leq j_{2}$ and $i_{1},i_{2}$ are such that $\sigma(J_{j_{1}} ) = I_{i_{1}}$ and $\sigma(J_{j_{2}} ) = I_{i_{2}}$, then $i_{1}\leq i_{2}$.

\begin{lemma}\label{Faamisto}
Suppose $b\in \N$ and $J \in \{ 0,\dots, n-1\}^{b}$. There exist functions $\sigma_{j,J}$ such that, for all smooth radial functions $g$ on $S$,
\begin{equation}\label{sigmajJ}
\bX_J g = \sum_{j=1}^{b} g^{(j)}  \sigma_{j,J}.
\end{equation}
In particular, $\sigma_{1,J} = \bX_{J}r$ and $\sigma_{b,J} = \prod_{j=1}^{b}\bX_{J_{j}}r$.
\end{lemma}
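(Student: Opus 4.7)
The plan is to prove the lemma by induction on $b = |J|$, using only Leibniz's rule for the first-order vector fields $\bX_{i}$ together with the one-variable chain rule $\bX_{i}(g(r)) = g'(r)\, \bX_{i} r$, which holds for any smooth $g$ (and is the content of the $b=1$ case of~\eqref{Faa}). No new analytic input beyond these two identities is needed.

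For the base case $b=1$, the chain rule gives $\bX_{J} g = g'(r)\, \bX_{J_{1}} r$, so the representation holds with $\sigma_{1,J} = \bX_{J}r$, which trivially matches both endpoint identifications.

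For the inductive step, let $|J| = b \geq 2$ and write $J' = (J_{2}, \dots, J_{b})$. By the induction hypothesis,
\[
\bX_{J'} g = \sum_{j=1}^{b-1} g^{(j)}\, \sigma_{j,J'},
\]
with $\sigma_{1,J'} = \bX_{J'}r$ and $\sigma_{b-1,J'} = \prod_{i=2}^{b} \bX_{J_{i}} r$. Apply $\bX_{J_{1}}$ to both sides. Since each $g^{(j)}$ is itself a smooth radial function, Leibniz's rule yields
\[
\bX_{J_{1}}\bigl(g^{(j)}\, \sigma_{j,J'}\bigr) = g^{(j+1)} (\bX_{J_{1}}r)\, \sigma_{j,J'} + g^{(j)}\, \bX_{J_{1}} \sigma_{j,J'}.
\]
Collecting terms by the order of derivative of $g$ gives~\eqref{sigmajJ} with the recursion
\[
\sigma_{j,J} = (\bX_{J_{1}}r)\, \sigma_{j-1,J'} + \bX_{J_{1}} \sigma_{j,J'}, \qquad 1 \leq j \leq b,
\]
adopting the conventions $\sigma_{0,J'} = \sigma_{b,J'} = 0$. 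These functions manifestly depend only on $J$ and on iterated $\bX$-derivatives of $r$, not on $g$.

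The two special identifications then follow at once: $\sigma_{1,J} = \bX_{J_{1}} \sigma_{1,J'} = \bX_{J_{1}}(\bX_{J'} r) = \bX_{J} r$, while $\sigma_{b,J} = (\bX_{J_{1}}r)\, \sigma_{b-1,J'} = \prod_{i=1}^{b} \bX_{J_{i}} r$. There is no real obstacle in this argument; the lemma asserts only existence of the $\sigma_{j,J}$ plus these two boundary cases, and the recursion delivers both. (A full multivariate Faà di Bruno formula, expressing each $\sigma_{j,J}$ as a sum over set partitions of $J$ of products of $\bX$-derivatives of $r$, is also available but will not be needed in the sequel.)
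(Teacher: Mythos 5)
Your inductive argument is correct and proves the lemma as stated: the recursion $\sigma_{j,J} = (\bX_{J_{1}}r)\,\sigma_{j-1,J'} + \bX_{J_{1}}\sigma_{j,J'}$ (with the conventions $\sigma_{0,J'}=\sigma_{b,J'}=0$) follows cleanly from Leibniz's rule and the one-variable chain rule, and both boundary identifications drop out. The route is essentially the same as the paper's (induction on $b$); the only difference is the level of explicitness at which the coefficient functions are recorded.

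That difference matters for the final parenthesis, which is the one place your write-up goes astray. The paper's proof does not merely assert existence: it records the explicit partition-sum expression~\eqref{sigmajJformula}, namely that $\sigma_{j,J}$ is a sum over partitions $\{J^{(1)},\dots,J^{(j)}\}$ of $J$ of products $\prod_\ell \epsilon_{\ell,\vec J}\,\bX_{J^{(\ell)}}r$ with $\epsilon_{\ell,\vec J}\in\{0,1\}$, and this refined form \emph{is} needed in the sequel: it is cited directly in the proof of Lemma~\ref{weakFaa} and is the whole content of the Corollary establishing~\eqref{estimatesigmajJ}. Your recursion by itself does not give~\eqref{estimatesigmajJ} straightforwardly, because bounding $\sigma_{j,J}$ requires controlling $\bX_{J_1}\sigma_{j,J'}$, not just $\sigma_{j,J'}$; the partition form handles this because Theorem~\ref{teo:derd} bounds each factor $\bX_{J^{(\ell)}}r$ individually. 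Fortunately, unrolling your recursion recovers~\eqref{sigmajJformula}: at each application of $\bX_{J_1}$, the term $(\bX_{J_1}r)\sigma_{j-1,J'}$ creates a new singleton block while $\bX_{J_1}\sigma_{j,J'}$ distributes $J_1$ into one of the existing blocks via Leibniz. So the argument extends, but the partition-sum structure should be stated rather than dismissed as unnecessary.
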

\begin{proof}
Let $\mathcal{J}(J)$ be the family of all partitions of $J$, namely the set of vectors of multi-indices $ \vec{J}=\{ J^{(1)}, \dots, J^{(m)} \}$ for some $m\in \{1,\dots, b\}$ with the property that $J^{(\ell)} \subseteq J$ for all $\ell=1,\dots, m$ and $J^{(\ell)}\cap J^{(h)}= \emptyset$ for all $h\neq \ell$, and that $\sum_{\ell=1}^{m}|J^{(\ell)}|=|J| $. By induction on $b\in \N$, one proves that
\[
\bX_J g = \sum_{j=1}^{b} g^{(j)}  \sigma_{j,J}
\]
where
\begin{equation}\label{sigmajJformula}
\sigma_{j,J}(r) = \sum_{\vec{J}=\{ J^{(1)}, \dots, J^{(j)} \} \in \mathcal{J}(J)} \prod_{\ell=1}^{j} \epsilon_{\ell, \vec{J}}\,  \bX_{J^{(\ell)}}r
\end{equation}
for certains constants $ \epsilon_{\ell, \vec{J}}\in \{0,1\}$, and in particular $\epsilon_{1, \vec{J}} = \epsilon_{b, \vec{J}}=1$.  
\end{proof}

If we apply Lemma~\ref{Faamisto}  to $g(r)=\cosh r$, we get the following. 
\begin{lemma}\label{weakFaa}
For all $m\in \N$, $m\geq 2$, and $J \in \{0,\dots, n-1\}^{m}$,
\begin{equation}\label{generale}
   \bX_{J} (\cosh r)   =   (\bX_{J}r ) \sinh{r} + f_{J}(r)\cosh r+g_{J}(r)\sinh{r},
\end{equation}
where $f_{J}$ is a linear combination of functions of the form  
\[ \bX_{I_{1}}r\cdot \bX_{I_{2}}r \cdots  \bX_{I_{p}}r, \qquad \mbox{ $p$ even}, \qquad  |I_{1}| + \dots +|I_{p}| \le m-1,
\]
and $g_J$ is a linear combination of 
\[
\bX_{\tilde I_{1}}r \cdot \bX_{\tilde I_{2}}r \cdots \bX_{\tilde I_{p}}r, \qquad \mbox{ $p$ odd}, \qquad |\tilde I_{1}| + \dots +|\tilde I_{p}| \le m-2
\]
($g_{J}=0$ if $m=2$). Moreover, $I_{{k}}, \tilde I_{k}\subseteq J$ for all $k$.
\end{lemma}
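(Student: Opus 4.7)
The plan is a direct application of Lemma~\ref{Faamisto} with the choice $g(r) = \cosh r$, followed by bookkeeping by parity. First I would observe that $g^{(j)}(r) = \sinh r$ when $j$ is odd and $g^{(j)}(r) = \cosh r$ when $j$ is even, so the expansion
\[
\bX_J (\cosh r) = \sum_{j=1}^{m} g^{(j)}(r) \, \sigma_{j,J}(r)
\]
provided by Lemma~\ref{Faamisto} splits cleanly into a $\sinh r$ piece (odd $j$) and a $\cosh r$ piece (even $j$).

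Next I would isolate the $j = 1$ contribution. Since Lemma~\ref{Faamisto} gives $\sigma_{1,J} = \bX_J r$, this term equals $(\bX_J r)\sinh r$, which is exactly the distinguished leading term in \eqref{generale}. The remaining terms (those with $j \geq 2$) are to be repackaged as $f_J(r) \cosh r + g_J(r) \sinh r$, with $f_J$ collecting the contributions from even values $j \geq 2$ (yielding $\cosh r$) and $g_J$ collecting the contributions from odd values $j \geq 3$ (yielding $\sinh r$).

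Then, using the explicit formula \eqref{sigmajJformula}, each $\sigma_{j,J}$ is a linear combination of products $\prod_{\ell=1}^{j} \bX_{J^{(\ell)}} r$ ranging over partitions $\vec J = \{J^{(1)}, \ldots, J^{(j)}\}$ of $J$ into $j$ nonempty sub-multi-indices, each contained in $J$. Setting $p = j$, the parity of $p$ matches the claim for $f_J$ and $g_J$, and the inclusion $I_k, \tilde I_k \subseteq J$ is automatic from the partition structure. The length constraints on each factor follow by a pigeonhole observation: in a partition of $J$ into at least two nonempty parts (as occurs for $f_J$, where $p \geq 2$), no single part can have length greater than $m-1$; in a partition into at least three nonempty parts (as for $g_J$, where $p \geq 3$), no single part has length greater than $m-2$.

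Finally, I would verify the degenerate case $m = 2$: no odd integer $j$ satisfies $3 \leq j \leq 2$, so the $g_J$ sum is empty, consistent with the statement. I do not anticipate any substantive obstacle, since the result is essentially a specialization of the abstract formula in Lemma~\ref{Faamisto} to $g = \cosh$, and the entire argument reduces to grouping terms by the parity of $j$ and reading off the partition data from \eqref{sigmajJformula}.
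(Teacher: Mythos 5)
Your proof is correct and follows exactly the same route as the paper's one-line argument: specialize Lemma~\ref{Faamisto} to $g=\cosh$, note that $g^{(j)}$ alternates between $\sinh r$ and $\cosh r$, isolate the $j=1$ term $(\bX_J r)\sinh r$, and collect the remaining $j\ge 2$ terms by parity, reading the partition data off \eqref{sigmajJformula}.

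One thing worth flagging: your argument actually produces products $\bX_{I_1}r\cdots\bX_{I_p}r$ in which $I_1,\dots,I_p$ \emph{partition} $J$, so $|I_1|+\cdots+|I_p|=m$, not $\le m-1$ (resp.\ $\le m-2$ for $g_J$) as the lemma literally states. The literal statement is in fact false already for $m=2$: one computes directly $\bX_{j_1}\bX_{j_2}\cosh r=(\bX_J r)\sinh r+(\bX_{j_1}r)(\bX_{j_2}r)\cosh r$, so $f_J=(\bX_{j_1}r)(\bX_{j_2}r)$ has $|I_1|+|I_2|=2=m$, not $\le 1$. What is true — and what you deduce by pigeonhole — is the per-factor estimate: each $|I_k|\le m-1$ for $f_J$ (since $p\ge 2$) and each $|\tilde I_k|\le m-2$ for $g_J$ (since $p\ge 3$). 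That reading, together with $\sum_\ell|I_\ell|=m$, is exactly what the inductive step in the proof of Theorem~\ref{teo:derd} uses: a product over a $p$-part partition of $J$ with $p\ge 2$ (resp.\ $p\ge 3$) gives $\prod_\ell\bigl(\tfrac{1+r}{r}\bigr)^{|I_\ell|-1}=\bigl(\tfrac{1+r}{r}\bigr)^{m-p}\lesssim\bigl(\tfrac{1+r}{r}\bigr)^{m-2}$ (resp.\ $\lesssim\bigl(\tfrac{1+r}{r}\bigr)^{m-3}$). So your proposal is sound, and you have in effect silently corrected what appears to be a misprint in the length constraint as printed; it would be better to make that discrepancy explicit rather than reproduce the printed bound.
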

\begin{proof}
We recall that radial derivatives of odd (even) order of $\cosh r$ are equal to $\sinh r$ ($\cosh r$). Then, \eqref{generale} immediately follows by \eqref{sigmajJ} and \eqref{sigmajJformula}.
\end{proof}

The key to prove Theorem~\ref{teo:derd} is the following lemma, whose proof we postpone for a moment. Right after the statement, we show that Theorem~\ref{teo:derd} follows at once.
\begin{lemma}\label{lem: dercosh}
For all $m\in \N$ and $J \in \{0,\dots, n-1\}^{m}$
\[
   |\bX_{J} (\cosh r)|  \lesssim \cosh r.
\]
\end{lemma}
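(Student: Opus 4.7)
The starting point is the explicit formula~\eqref{coshd}. Setting
\[
A_{\pm} := a^{\pm 1/2}, \qquad X_i := a^{-1/2}x_i \ (1\le i\le \mu), \qquad Z_k := a^{-1/2}z_k \ (1\le k\le \nu),
\]
it exhibits $\cosh r$ as the polynomial
\[
\cosh r = \tfrac{1}{2}(A_+^2 + A_-^2) + \tfrac{1}{4}(A_+^2 + 1)|X|^2 + \tfrac{1}{32}A_+^2|X|^4 + \tfrac{1}{2}|Z|^2
\]
in the new variables. A direct computation from the formulas for $\bX_0,\bX_\ell,\bX_{\mu+k}$ in Section~\ref{sec: 2}'s introduction shows that each $\bX_j$ sends polynomials in $A_\pm, X, Z$ to polynomials in $A_\pm, X, Z$: $\bX_0$ is diagonal with each of $A_\pm,X_i,Z_k$ an eigenfunction (of eigenvalue $\tfrac12$, $-\tfrac12$, $-\tfrac12$, $-\tfrac12$ respectively); $\bX_\ell$ ($1\le\ell\le\mu$) annihilates $A_\pm$, sends $X_h\mapsto\delta_{\ell h}$, and sends $Z_k\mapsto \tfrac{A_+}{2}(J_{u_k}X, e_\ell)$; and $\bX_{\mu+k}$ annihilates $A_\pm, X_i$ and sends $Z_j\mapsto A_+\delta_{jk}$. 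Hence $\bX_J(\cosh r)$ is a polynomial $P_J$ in $A_+^{\pm 1}, X, Z$ for every multi-index $J$ (using $A_+A_-=1$ to absorb $A_-$).

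The plan is to prove that every monomial $M=A_+^a X^\beta Z^\gamma$ appearing in any $P_J$ satisfies $|M|\lesssim \cosh r$, with constant depending only on $|J|$. By the Leibniz rule the action of $\bX_j$ on such a monomial transforms the exponent triple $(a,|\beta|,|\gamma|)$ via one of the moves $(a,|\beta|-1,|\gamma|)$ (from $\bX_\ell$ hitting an $X$-factor), $(a+1,|\beta|+1,|\gamma|-1)$ (from $\bX_\ell$ hitting a $Z$-factor), or $(a+1,|\beta|,|\gamma|-1)$ (from $\bX_{\mu+k}$), besides the trivial rescaling by $\bX_0$. The combination $a+|\gamma|$ is preserved by each move, $a$ is non-decreasing, and $|\beta|+|\gamma|$ is non-increasing. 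Since the monomials in the above expansion of $\cosh r$ already satisfy $a+|\gamma|\in\{-2,0,2\}$, $a\leq 2$, $|\gamma|\leq 2$, and $|\beta|+|\gamma|\leq 4$, the reachable exponent triples form a \emph{finite} set.

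To conclude one checks monomial by monomial that each reachable $M=A_+^a X^\beta Z^\gamma$ is $\lesssim \cosh r$. This follows by combining the basic estimates $A_\pm^2,|X|^2,A_+^2|X|^2,A_+^2|X|^4,|Z|^2\lesssim\cosh r$ (all immediate from the above expansion) with elementary Cauchy--Schwarz, e.g.\ $|A_+^2 X_i X_j|\leq A_+^2|X|^2\lesssim \cosh r$, $(A_+ X_i Z_k)^2\leq A_+^2|X|^2\cdot|Z|^2\lesssim (\cosh r)^2$, and $(A_+ Z_k)^2\leq A_+^2\cdot|Z|^2\lesssim (\cosh r)^2$. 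I expect the main (only) obstacle to be establishing the finiteness of the reachable set: each time $\bX_\ell$ hits a $Z$-factor or $\bX_{\mu+k}$ is applied the power of $A_+$ increases by one, which a priori could let $a$ grow without bound under iteration. The saving invariant $a+|\gamma|\in\{-2,0,2\}$, together with $|\gamma|\geq 0$, is exactly what caps $a\leq 2$ and ensures both the termination of the enumeration and the validity of the Cauchy--Schwarz bounds.
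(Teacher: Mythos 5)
Your argument is correct, and it takes a genuinely different route from the paper's. The authors' proof splits into three cases governed by the Lie algebra structure: they first handle $J\in\{1,\dots,n-1\}^m$ by noting $\cosh r$ is a polynomial of degree $4$ in $x$ and $2$ in $z$ (so high-order derivatives vanish and low-order ones are checked directly); then they handle $J=\{0\}^m$ via the periodicity $\bX_0^2(\cosh r)=\cosh r-\tfrac{|x|^2}{4}$; and finally they treat mixed multi-indices by commuting all $\bX_0$'s to the right using $[\bX_0,\bX_j]=c_j\bX_j$ and reducing to the two previous cases. Your proof instead linearizes the problem in a single stroke: by passing to $A_{\pm}=a^{\pm1/2}$, $X=A_-x$, $Z=A_-z$, each $\bX_j$ becomes a derivation whose action on the monomial alphabet is explicit and finite, and the conserved quantity $a+|\gamma|$ together with the monotonicities of $a$ and $|\beta|+|\gamma|$ caps the exponents, so the whole iterated-derivative calculation lives inside a fixed finite set of monomial types, each of which is bounded by $\cosh r$ via Cauchy--Schwarz and the initial expansion. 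Your route trades the paper's three-case commutator bookkeeping for a single invariant, which arguably makes the finiteness more transparent and would scale better to variants of the problem, while the paper's route avoids the (short but necessary) enumeration of reachable monomial classes. One small presentational point: when you write that $a\leq 2$ follows from $a+|\gamma|\in\{-2,0,2\}$ and $|\gamma|\geq0$, it is cleaner to also note explicitly that $|\gamma|$ is monotone non-increasing (so $|\gamma|\leq2$ throughout), which combined with $a$ non-decreasing from initial values $\geq-2$ pins down $a\in\{-2,\dots,2\}$, $|\gamma|\in\{0,1,2\}$, $|\beta|\leq4$ without any further case analysis; this also keeps the reader from worrying that $|\gamma|$ could momentarily grow via some cross-term.
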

\begin{proof}[Proof of Theorem~\ref{teo:derd}]
By Lemma \ref{weakFaa}, for $J \in \{0,\dots, n-1\}^k$, $k \ge 2$
\[
    |\bX_J r| \lesssim  \frac{1}{\sinh r} ( |\bX_J(\cosh r)|+|f_J(r)|\cosh r +|g_J(r)| \sinh r),
\]
whence by Lemma~\ref{lem: dercosh} 
\begin{align}\label{dercosh}
     |\bX_J r | \lesssim \coth r(1+|f_J(r)|)+|g_J(r)|.
\end{align} 
If $k=2$, then $f_J(r)$ is a product of first derivatives of the distance, whence $|f_J(r)| \lesssim 1$, while $g_{J}=0$. Then by~\eqref{dercosh} 
\begin{align*}
    |\bX_J(r)| \lesssim \coth r \approx \frac{1+r}{r}.
\end{align*} 
Assume now that for $k\geq 2$, $j=1,\dots, k$ and  $\tilde{J} \in \{0,\dots, n-1\}^{j}$
\[
|\bX_{\tilde J} r | \lesssim  \left(\frac{1+r}{r}\right)^{j-1},
\]
and pick $J \in \{0,\dots, n-1\}^{k+1}$. By~\eqref{dercosh}, one gets 
\begin{align*}
     |\bX_J(r)| &\lesssim \coth r (1+|f_J(r)|)+|g_J(r)| \\
     & \lesssim  \coth r \bigg(1+ \left(\frac{1+r}{r}\right)^{k-1}\bigg)+ \left(\frac{1+r}{r}\right)^{k-2} \lesssim \left( \frac{1+r}{r}\right)^{k},
\end{align*} as desired.
\end{proof}
We are then left with proving Lemma \ref{lem: dercosh}. 
\begin{proof}[Proof of Lemma \ref{lem: dercosh}] 
We begin by observing that the statement holds when $m=1$, as for $\ell=0, \dots, n-1$
\[
|\bX_{\ell} \cosh r | = |\sinh r \bX_{\ell} r| \leq \sinh r \leq \cosh r.
\]
Moreover, the statement also holds when $m$ is arbitrary and $0\notin J$, namely $J\in \{ 1, \dots, n-1\}^{m}$. Indeed, $\cosh r$ is a polynomial of degree $4$ in $x$ and degree $2$ in $z$, and one sees that
\begin{align*}
    \bX_J \cosh r=0 \quad \mbox{for all } J \in \{ 1, \dots, n-1\}^{m}, \qquad m\geq 5. 
    \end{align*}
The fact that $|\bX_J \cosh r|\lesssim \cosh r$ for $0\notin J$ and $2\leq m\leq 4$ can be checked via elementary (though tedious) computations.

Then, we consider the case when all the derivatives are along $\bX_{0}$, i.e.\ $J=\{0\}^{m}$. The key observation in this case is that by~\eqref{coshd}  
\[
\bX_0^2(\cosh r)= \cosh r-\frac{|x|^2}{4}.
\]
Therefore, for all $k\in\N$, $k\geq 1$,
\begin{align}\label{derivative0coshd}
 \bX_0^{2k} \cosh r= \cosh r- \frac{|x|^2}{4}, \qquad   \bX_0^{2k +1} \cosh r=\bX_0 \cosh r.
\end{align}
Since $|x|^2 \lesssim \cosh r$ and
\begin{align}\label{X0cosh}
    \bX_0(\cosh r)=\frac{1}{2} a-2\bigg(\frac{a^{-1/2}}{2}+\frac{1}{8}\,a^{-1/2}|x|^2\bigg)^2-\frac{1}{2}\,a^{-1}|z|^2,
\end{align} 
 we easily get that 
\[
| \bX_0^{2k} \cosh r| \lesssim \cosh r, \qquad  |\bX_0^{2k+1} \cosh r| \lesssim \cosh r.
\]
We are then left with proving that the desired bound holds for arbitrary mixed derivatives. We observe that by the commutation relations~\eqref{commcampi}, for $1\leq j\leq n-1$,
\[
[\bX_0, \bX_j] = c_j\bX_j, \quad \mbox{equivalently} \quad \bX_0 \bX_j = \bX_j \bX_0 + c_j\bX_j,
\]
with $c_j \in \{1/2,1\}$ (precisely, $c_j = 1/2$ if $j=1,\dots,\mu$, while $c_j = 1$ if $j=\mu+1, \dots, n-1$). Therefore, for all $J \in \{0,\dots, n-1\}^m$
\begin{align}\label{commutation2}
\bX_J = \sum_{|I'|<m, \, 0\notin I'} c_{I'}\bX_{I'} + \bX_I \bX_0^p
\end{align}
for some $0\leq p\leq m$, $I \in \{1,\dots, n-1\}^{m-p}$ and coefficients $c_{I'}$ (which might well be zero). In particular, $p$ is the number of occurrences of $0$ in $J$, and we can assume $0<p<m$; otherwise, the statement follows by previous cases. 

Suppose $J \in \{0,\dots, n-1\}^{m}$ and $0<p<m$. By~\eqref{commutation2} and the preceding cases
\begin{align*}
    |\bX_J(\cosh r)|&
     \lesssim \cosh r + |\bX_I \bX_0^p \cosh r|
     \\  &=\cosh r+ \begin{cases} \big|\bX_I\big(\cosh r-\frac{|x|^2}{4}\big)\big| &\text{if $p$ is even,} \\
    |\bX_I\bX_0 \cosh r| &\text{if $p$ is odd.}
    \end{cases}
\end{align*} 
It remains to realize now that for all $I \in \{1,\dots, n-1\}^{j}$, $j\in\N$,
\[
\bigg|\bX_I\bigg(\cosh r-\frac{|x|^2}{4}\bigg)\bigg| \lesssim \cosh r +|\bX_I (|x|^{2})| \lesssim \cosh r,
\]
as well as, by~\eqref{X0cosh}, that
\begin{align*}
     |\bX_I \bX_0(\cosh r)| = 0 \quad \mbox{if} \quad |I|\geq 5,  \qquad    |\bX_I \bX_0(\cosh r)| \lesssim \cosh r \quad \mbox{if } 1\leq |I|\leq 4.
\end{align*} 
The proof is then complete.
\end{proof}

We conclude this section by stating a corollary of Theorem~\ref{teo:derd} and Lemma~\ref{Faamisto}, whose notation we maintain, and by observing that the estimates at infinity of the derivatives of the distance of Theorem~\ref{teo:derd} are sharp; i.e., the derivatives of $r$ can admit a non-zero limit at infinity. 
\begin{corollary}
Suppose $b\in \N$ and $J \in \{0,\dots, n-1\}^b$. Then
\begin{align}\label{estimatesigmajJ}
|\sigma_{j,J}(r)| \lesssim \bigg( \frac{1+r}{r}\bigg)^{b-j}, \qquad r>0, \; j=1,\dots, b.
\end{align}  
\end{corollary}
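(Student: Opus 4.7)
The plan is to read off the bound directly from the explicit formula for $\sigma_{j,J}$ given in the proof of Lemma~\ref{Faamisto}, combined with the distance derivative bound of Theorem~\ref{teo:derd}.

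Recall from \eqref{sigmajJformula} that
\[
\sigma_{j,J}(r) = \sum_{\vec{J}=\{J^{(1)},\dots,J^{(j)}\}\in\mathcal{J}(J)} \prod_{\ell=1}^{j} \epsilon_{\ell,\vec{J}}\,\bX_{J^{(\ell)}} r,
\]
where the outer sum runs over partitions of $J$ into $j$ nonempty sub-multi-indices. Since $\mathcal{J}(J)$ is a finite set whose cardinality depends only on $b$, and the coefficients $\epsilon_{\ell,\vec J}$ lie in $\{0,1\}$, it suffices to estimate each product on the right-hand side uniformly.

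Fix a partition $\vec J = \{J^{(1)},\dots,J^{(j)}\}$ and set $b_\ell = |J^{(\ell)}|\ge 1$, so that $b_1 + \cdots + b_j = b$. Applying Theorem~\ref{teo:derd} to each factor yields
\[
\prod_{\ell=1}^{j} |\bX_{J^{(\ell)}} r| \;\lesssim\; \prod_{\ell=1}^{j} \bigg(\frac{1+r}{r}\bigg)^{b_\ell - 1} \;=\; \bigg(\frac{1+r}{r}\bigg)^{\sum_\ell (b_\ell - 1)} \;=\; \bigg(\frac{1+r}{r}\bigg)^{b-j},
\]
with an implicit constant depending only on $b_1,\dots,b_j$, hence only on $b$. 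Summing over the finitely many partitions in $\mathcal{J}(J)$ yields \eqref{estimatesigmajJ}.

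No obstacle is anticipated: the identity $\sum_\ell b_\ell = b$ is precisely what makes the exponents telescope to $b-j$, and the $j=1$ case (where $\sigma_{1,J} = \bX_J r$) and the $j=b$ case (where $\sigma_{b,J} = \prod_{j=1}^b \bX_{J_j} r$ is bounded since first-order derivatives of $r$ are uniformly bounded) are both automatically consistent with the general estimate.
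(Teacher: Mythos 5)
Your argument is correct and follows exactly the paper's own proof: combine the explicit formula \eqref{sigmajJformula} for $\sigma_{j,J}$ with the bound $|\bX_{I}r| \lesssim \big(\tfrac{1+r}{r}\big)^{|I|-1}$ from Theorem~\ref{teo:derd} and observe that the exponents telescope to $b-j$ since $\sum_\ell |J^{(\ell)}| = b$. Nothing to change.
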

\begin{proof}
Just combine~\eqref{sigmajJformula} with Theorem~\ref{teo:derd} to get
\begin{align*}
    |\sigma_{j,J}(r)| \lesssim \sum_{\vec{J}=\{ J^{(1)}, \dots, J^{(j)} \} \in \mathcal{J}(J)} \bigg( \frac{1+r}{r}\bigg)^{ \sum_{\ell=1}^{j} (|J^{(\ell)}| -1)} \lesssim \bigg( \frac{1+r}{r}\bigg)^{b-j},
\end{align*} as desired.
\end{proof}

\begin{proposition}\label{sharpinf}
For all $k\in \N$ there exists a smooth curve $\gamma_{k} \colon (0,\infty) \to S$ and a constant $b_{k}\neq 0$ such that $\lim_{a\to \infty} r(\gamma_{k}(a))=+\infty$ and $ \lim_{a\to \infty}(\bX_{0}^{k}r)(\gamma_{k}(a)) = b_{k}$.
\end{proposition}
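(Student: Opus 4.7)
The plan is to deform the curve $a\mapsto(0,0,a)$, along which $r=\log a$ forces $\bX_0 r\equiv 1$ and $\bX_0^m r\equiv 0$ for $m\ge 2$, by attaching a $z$-component that grows linearly in $a$. For a constant $c>0$ to be chosen in terms of $k$, I would set
\[
\gamma_k(a) = (0,\, c a\, u_1,\, a),\qquad a\in(0,\infty),
\]
which is smooth, and from~\eqref{coshd} satisfies $\cosh r(\gamma_k(a)) = \tfrac12\bigl((1+c^2)a + a^{-1}\bigr)$, so in particular $r(\gamma_k(a))\to\infty$.

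To compute $\bX_0^k r$ at $\gamma_k(a_0)$, I would use that $\bX_0$ is left-invariant and generates the one-parameter subgroup $\{(0,0,e^t)\}$; the group law gives $(x,z,a)\cdot(0,0,e^t)=(x,z,ae^t)$, whence
\[
(\bX_0^k r)(x_0,z_0,a_0) = \frac{d^k}{dv^k}\,r(x_0,z_0,e^v)\Big|_{v=\log a_0}
\]
with $(x_0,z_0)$ frozen. Applied at $\gamma_k(a_0)$, formula~\eqref{coshd} yields
\[
\cosh r(0,c a_0 u_1,e^v) = \tfrac12 e^v + \tfrac{1+c^2 a_0^2}{2}\, e^{-v} = C\,\cosh(v-\tau),
\]
with $C=C(a_0):=\sqrt{1+c^2 a_0^2}$ and $\tau=\tau(a_0):=\tfrac12\log(1+c^2 a_0^2)$. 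Setting $w:=v-\tau$ and $F(w;C):=\cosh^{-1}(C\cosh w)$, this rewrites as
\[
(\bX_0^k r)(\gamma_k(a_0)) = \partial_w^k F(w;C(a_0))\big|_{w=w_0(a_0)},\qquad w_0(a_0):=\log a_0-\tau(a_0),
\]
and, as $a_0\to\infty$, $w_0(a_0)\to s_0:=-\log c$ while $C(a_0)\to\infty$.

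The technical heart is the uniform-on-compacta expansion
\[
F(w;C) = \log(2C) + \log\cosh w + O(C^{-2}), \qquad C\to\infty,
\]
obtained from $\cosh^{-1}(y)=\log(y+\sqrt{y^2-1})$ together with $\sqrt{1-(C\cosh w)^{-2}}=1-\tfrac{1}{2(C\cosh w)^2}+\cdots$. Its remainder $-\tfrac{1}{4C^2\cosh^2 w}+O(C^{-4})$ is smooth in $w$ with all its $w$-derivatives still $O(C^{-2})$ on compacta (a routine chain-rule / Fa\`a di Bruno argument applied to the small parameter $(C\cosh w)^{-2}$). Since $\log(2C)$ is $w$-independent, $\partial_w^k F(\cdot\,;C)\to\tfrac{d^k}{dw^k}\log\cosh w$ uniformly on compacta, and combining with $w_0(a_0)\to s_0$ yields
\[
(\bX_0^k r)(\gamma_k(a_0)) \;\longrightarrow\; b_k \;:=\; \frac{d^k}{ds^k}\log\cosh s\,\Big|_{s=s_0}.
\]

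Finally I would choose $c$ so that $b_k\neq 0$. For even $k$ the choice $c=1$ (so $s_0=0$) works: the Taylor expansion $\log\cosh s=\tfrac{s^2}{2}-\tfrac{s^4}{12}+\tfrac{s^6}{45}-\cdots$ has a nonzero coefficient at every positive even order. For odd $k$, $\tfrac{d^k}{ds^k}\log\cosh s$ is an odd real-analytic function and not identically zero (otherwise $\log\cosh$ would be a polynomial), so its zeros form a discrete set and I pick any $s_0\neq 0$ outside it. The only non-routine step I anticipate is justifying that \emph{all} $w$-derivatives of the error in the expansion of $F(w;C)$ remain $O(C^{-2})$ uniformly on compacta; the rest is direct algebra with~\eqref{coshd} and the group law.
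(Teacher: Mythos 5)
Your argument is correct and reaches the same conclusion as the paper, but by a genuinely different route. You and the paper use essentially the same one-parameter family of curves (your $c$ is the paper's $\sqrt{(1-y)/(1+y)}$, and indeed $\tanh(-\log c)=y$, so your $b_1=\tanh s_0$ agrees with the paper's $\lim(\bX_0 r)=y$). The difference is in how the higher-order limit is extracted. The paper stays intrinsic on $S$: it uses the Faà di Bruno decomposition of $\bX_0^k r$ in terms of $\bX_0^j\cosh r$, shows by induction that the limit $P_k(y)$ is a polynomial of degree $k$ in $y$, and establishes that its leading coefficient is $(-1)^{k-1}(k-1)!$ via the combinatorial identity $M_k=(e^{\ln x})^{(k)}(1)=0$. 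You instead observe that $\bX_0$ flows the curve $a\mapsto ae^t$, reduce to the scalar function $v\mapsto\cosh^{-1}\bigl(C\cosh(v-\log C)\bigr)$, expand $F(w;C)=\log 2C+\log\cosh w + G\bigl((C\cosh w)^{-2}\bigr)$ for large $C$ (with $G$ analytic and $G(0)=0$, so that all $w$-derivatives of the remainder are $O(C^{-2})$ on compacta by the chain rule), and read off $b_k=\frac{d^k}{ds^k}\log\cosh s\big|_{s=-\log c}$. Your approach buys an explicit closed form for $b_k$ that the paper does not state, and avoids the combinatorial lemma entirely; the paper's approach is more in line with the Faà di Bruno machinery developed elsewhere in the text and isolates the leading Taylor coefficient of $P_k$, which would not be immediate from the expression $\frac{d^k}{ds^k}\log\cosh$. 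One can check the two are consistent: since $(\log\cosh)'=\tanh$ and $\tanh'=1-\tanh^2$, one gets $\frac{d^k}{ds^k}\log\cosh s=P_k(\tanh s)$ with leading coefficient satisfying $a_{k+1}=-k\,a_k$, hence $a_k=(-1)^{k-1}(k-1)!$, matching the paper. The only step you flag as ``non-routine'' — the uniform $O(C^{-2})$ control on all $w$-derivatives of the error — is indeed routine, precisely as you describe, since every $w$-derivative of $(C\cosh w)^{-2}$ is a bounded-coefficient polynomial in $\tanh w$ times $(C\cosh w)^{-2}$.
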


\begin{proof}
Suppose $y\in (-1,1)$. Then, the curve
\[
\eta_y (a) = \bigg(0, \sqrt{\frac{1-y}{1+y}}a u_{1}, a\bigg), \qquad a>0,
\]
satisfies $\lim_{a\to \infty} r( \eta_{y}(a))=+\infty$ (observe that $\lim_{a\to \infty} \cosh r(\eta_{y}(a)) = + \infty$) and 
\[
(\bX_{0}r)(  \eta_{y}(a)) = \frac{(\bX_{0}\cosh r)( \eta_{y}(a))}{\sinh r( \eta_{y}(a))} \to y, \qquad a\to \infty.
\]
Therefore, the statement holds for $k=1$. We now claim that for all $k\geq 2$ the limit
\begin{equation}\label{pollimit}
P_{k}(y) \coloneqq \lim_{a\to \infty} (\bX_{0}^{k}r)(\eta_{y}(a)), \qquad y\in (-1,1)
\end{equation}
exists and is a non-null polynomial of degree $k$ in $y$. Therefore, for all $k\in \N$ there is $y_{k}\in (-1,1)$ such that $P_{k}(y_{k}) \neq 0$, and the choice $\gamma_{k}= \eta_{y_{k}}$ gives the desired curve.

We proceed by induction, and  show that~\eqref{pollimit} exists and that the coefficient of its highest degree term is $(-1)^{k-1}(k-1)!$. For $k=1$ this has just been proved. Assume that this holds up to $k-1$ for some $k\geq 2$. From the Faà di Bruno formula~\eqref{Faa},
\[
\bX_{0}^{k}r = \frac{\bX_{0}^{k}\cosh r}{\sinh r} - k! \sum_{j=2}^k  \frac{1}{j!} \frac{(\cosh r)^{(j)}}{\sinh r}  \sum_{m_1+\cdots +m_j=k} \frac{\bX_{0}^{m_1} r}{m_{1}!} \cdots \frac{\bX_{0}^{m_j}r}{m_j!}.
\]
Since the outer sum runs over $j\geq 2$, all the $m_{\ell}$'s in the inner sum are $\leq k-1$, and the inductive assumption ensures that the limit~\eqref{pollimit} exists. By~\eqref{derivative0coshd}, moreover,
\[
\lim_{a\to \infty}   \frac{\bX_{0}^{k}\cosh r}{\sinh r} (\eta_{y}(a)) = y \quad \mbox{if $k$ odd},\qquad \lim_{a\to \infty}   \frac{\bX_{0}^{k}\cosh r}{\sinh r} (\eta_{y}(a)) = 1 \quad \mbox{if $k$ even}.
\] 
By this and the inductive assumption, $P_{k}$ is a polynomial of degree $k$ and the coefficient of its term of highest degree is
\[
- k! \sum_{j=2}^k  \frac{1}{j!} \sum_{m_1+\cdots +m_j=k} \frac{(-1)^{m_{1}}(m_{1}-1)!}{m_{1}!} \cdots \frac{(-1)^{m_{j}}(m_{j}-1)!}{m_{j}!} .
\]
It remains to prove that the above equals $(-1)^{k}(k-1)!$, but this is the same as proving that
\[
M_{k}\coloneqq k!\sum_{j=1}^k  \frac{1}{j!} \sum_{m_1+\cdots +m_j=k} \frac{(-1)^{m_{1}}(m_{1}-1)!}{m_{1}!} \cdots \frac{(-1)^{m_{j}}(m_{j}-1)!}{m_{j}!} =0.
\]
This immediately follows by Faà di Bruno's formula, as $M_{k}=(\e^{\ln x})^{(k)}(1)=0$.
\end{proof}

\section{Space and time derivatives of $h_{t}$}\label{sec: 4}
The aim of this section is to prove Theorem~\ref{teo:main}~(2). As a first step, we shall prove the estimates for the space derivatives, namely~(2) when $m=0$. The case $m>0$ will be treated afterwards. 
\subsection{Space derivatives}
We begin by stating the aforementioned special case of Theorem~\ref{teo:main}~(2), which reads as follows.
\begin{proposition}\label{prop:spaceder}
Suppose $k\in \N$ and $J\in \{0,\dots, n-1\}^{k}$. Then
\[
| \bX_{J} h_{t}(r)| \lesssim \tilde \Psi_{k}(r,t) h_{t}(r), \qquad \forall \, t,r>0,
\]
and in the particular case when $J = \{j\}^{k}$ for some $j\in\{0,\dots, n-1\}$, 
\[
|  \bX_j^{k} h_{t}(r)| \lesssim  \Psi_{k}(r,t) h_{t}(r), \qquad \forall \, t,r>0.
\]
\end{proposition}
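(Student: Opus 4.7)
The plan is to combine the Faà di Bruno formula from Lemma~\ref{Faamisto} with the radial derivative bounds of Theorem~\ref{teo:main}~(1) and the sharp distance bounds of Theorem~\ref{teo:derd} (in the form~\eqref{estimatesigmajJ}). Writing
\[
\bX_J h_t(r) = \sum_{j=1}^{k} h_t^{(j)}(r)\,\sigma_{j,J}(r),
\]
the naive term-by-term estimate gives $|h_t^{(j)}(r)\,\sigma_{j,J}(r)| \lesssim \Psi_j(r,t)\bigl((1+r)/r\bigr)^{k-j} h_t(r)$. For $r>1$, where $(1+r)/r \asymp 1$ and $\Psi_j(r,t) \lesssim \Psi_k(r,t) \asymp \tilde\Psi_k(r,t) \asymp \bigl(1+(1+r)/t\bigr)^k$, summation over $j$ is immediate and yields the desired bound without further effort.

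The genuine difficulty lies in the regime $r \leq 1$, where $(1+r)/r \asymp 1/r$ is singular and the naive bound $\Psi_j(r,t) r^{j-k} h_t$ exceeds $\tilde\Psi_k(r,t) h_t$ in several sub-regimes. Since $\bX_J h_t$ must be smooth (as $h_t$ is a smooth function on $S$), the singular contributions in the individual $\sigma_{j,J}$ must cancel out across the sum. To exploit this I would use the smoothness of $r^2$ at the identity (while $r$ itself is not smooth there) and write $h_t(r) = H_t(r^2)$ for a smooth function $H_t$, so that reapplying Lemma~\ref{Faamisto} gives an alternative expression
\[
\bX_J h_t = \sum_{j=1}^{k} H_t^{(j)}(r^2)\,\tau_{j,J},
\]
where each $\tau_{j,J}$ is a polynomial in the derivatives $\bX_I(r^2)$, all of which are bounded on $\{r\leq 1\}$. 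One then relates $H_t^{(j)}(r^2)$ to the radial derivatives $h_t^{(i)}(r)$ via the identities obtained by successively differentiating $h_t(r)=H_t(r^2)$, the point being that the natural cancellation of the odd-order radial derivatives of $h_t$ at the origin (which is precisely the source of the extra factor of $r$ in $\Psi_k$ for odd $k$) absorbs the $1/r$ factors. After this rewriting, Theorem~\ref{teo:main}~(1) delivers the bound $\tilde\Psi_k(r,t) h_t(r)$. The algorithmic part of the proof—presumably Proposition~\ref{propodd}—consists in managing the combinatorics of this rewriting uniformly in $J$.

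Finally, for the single-direction case $J=\{j\}^k$, the improvement from $\tilde\Psi_k$ to $\Psi_k$ can be obtained by induction on $k$, using $\bX_j^k h_t = \bX_j(\bX_j^{k-1} h_t)$ together with the commutation relations~\eqref{commcampi} and the fact that $\sigma_{i,\{j\}^k}$ reduces to sums of products $\prod_l \bX_j^{m_l} r$ with $\sum_l m_l = k$, each of whose odd-order factors vanishes at the identity. The main obstacle I anticipate is the small-$r$ bookkeeping for odd-order \emph{mixed} derivatives, where the distinction between $\Psi_k$ and $\tilde\Psi_k$ first appears and where only a genuine cancellation argument—rather than term-by-term estimation—yields the sharp bound; this is also why the statement carefully carves out the single-direction case as the one for which the sharper $\Psi_k$ still holds.
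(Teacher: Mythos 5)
Your idea of rewriting $h_t(r)=H_t(r^2)$ and applying Fa\`a di Bruno to the smooth function $r^2$ is in the same spirit as the paper's construction (the key identity inside the paper's Lemma~\ref{xismooth} is precisely that the $\Xi_{2j,J}$ are, up to triangular corrections, $\bX_J(r^{2j})$). The case $r>1$ is fine. However, the small-$r$ part of your argument has two genuine gaps. First, once you write $\bX_J h_t=\sum_{j=1}^k H_t^{(j)}(r^2)\tau_{j,J}$ with $\tau_{j,J}$ built from $\bX_{I}(r^2)$, you estimate the $\tau_{j,J}$ only as ``bounded.'' That gives at best $|\bX_J h_t|\lesssim \big(1+\tfrac{1+r}{t}\big)^k h_t$ on $r\le 1$, which is strictly larger than $\tilde\Psi_k(r,t)h_t$: writing $\Gamma=1+\tfrac{1+r}{t}$ and using~\eqref{equivaAnk}, when $r^2\Gamma\le 1$ one has $\tilde\Psi_k\approx\Gamma^{k/2}$ (for $k$ even), so you are off by $\Gamma^{k/2}$, and when $r^2\Gamma\ge 1$ one has $\tilde\Psi_k\approx r^k\Gamma^k$, so you are off by $r^{-k}$. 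To recover $\tilde\Psi_k$ with your decomposition you would need the sharper statement that $\tau_{j,J}=O\big(r^{\max(0,\,2j-k)}\big)$ near the identity, coming from the fact that any partition of $J$ into $j$ blocks contains at least $2j-k$ singleton blocks and each $\bX_\ell(r^2)=2r\bX_\ell r=O(r)$. That observation is nowhere in your sketch.

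Second, your stated route to estimating $H_t^{(j)}(r^2)$ --- ``relate $H_t^{(j)}(r^2)$ to $h_t^{(i)}(r)$ by differentiating $h_t(r)=H_t(r^2)$ and apply Theorem~\ref{teo:main}~(1)'' --- does not close. Inverting the triangular system gives, e.g., $H_t''(r^2)=\big(h_t''(r)-h_t'(r)/r\big)/(4r^2)$; applying Theorem~\ref{teo:main}~(1) \emph{termwise} to $h_t''$ and $h_t'$ only yields $|H_t''(r^2)|\lesssim \Gamma h_t/r^2$, which blows up as $r\to0^+$. The cancellation $h_t''(r)-h_t'(r)/r=O(r^2)$ is real but invisible at the level of the individual bounds of Theorem~\ref{teo:main}~(1); one must instead pass through $\Rs^j h_t$ and Lemma~\ref{lem: heatderapprox}, or---as the paper does---build Taylor remainders $\Phi_{2b,2q,t}$ (Lemma~\ref{lemmaphi}) so that the vanishing is encoded as an integral $\int_0^r s^{2b-1}h_t^{(2b+2q+1)}(s)\,ds$ where each application of Theorem~\ref{teo:main}~(1) sees the extra factor of $s$. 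A related confusion appears in the single-direction claim: the factors $\bX_j^{m_\ell}r$ of odd order $m_\ell>1$ do not vanish at the identity (the distance $r$ is not smooth there, and Theorem~\ref{teo:derd} merely bounds them by $(1+r)^{m_\ell-1}/r^{m_\ell-1}$); what the paper actually proves and uses in this case is that $\bX_j^{2k+1}(r^{2\ell})=O(r)$ near the identity (Proposition~\ref{prop:extrar}), a statement about the smooth function $r^{2\ell}$, not about $r$.
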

As already mentioned, one cannot replace $\tilde \Psi$ by $\Psi$ in the first formula above. We shall go into details later on.

The estimates in Proposition~\ref{prop:spaceder} for large radii are easy to get as they immediately follow from the Faà di Bruno formula, in particular Lemma~\ref{Faamisto} and~\eqref{estimatesigmajJ}.
\begin{proposition}\label{prop:dermag1}
Suppose $k\in \N$ and $J\in \{0,\dots, n-1\}^{k}$. Then 
\[
| \bX_{J} h_{t}(r)| \lesssim   \left(1+ \frac{1+r}{t}\right)^{k}h_{t}(r) , \qquad r\geq 1,\,  t>0.
\]
\end{proposition}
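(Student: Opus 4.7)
The plan is to combine the multivariate Faà di Bruno formula (Lemma~\ref{Faamisto}) with the radial derivative bound of Theorem~\ref{teo:main}~(1) and the estimate~\eqref{estimatesigmajJ} for the coefficients $\sigma_{j,J}$. Since $h_t$ is radial on $S$, Lemma~\ref{Faamisto} applies and gives the expansion
\[
\bX_J h_t(r) \;=\; \sum_{j=1}^{k} h_t^{(j)}(r)\, \sigma_{j,J}(r),
\]
where $h_t^{(j)}$ denotes the $j$-th radial derivative and the $\sigma_{j,J}$ are built from lower-order derivatives of the distance as in~\eqref{sigmajJformula}.

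Next I would plug in the two known bounds. By Theorem~\ref{teo:main}~(1), $|h_t^{(j)}(r)| \lesssim \Psi_j(r,t)\, h_t(r)$, and since we are in the regime $r \geq 1$, the formula $\Psi_j(r,t) \asymp (1+(1+r)/t)^j$ from the equivalence table after~\eqref{equivaAnk} applies. Meanwhile, $|\sigma_{j,J}(r)| \lesssim ((1+r)/r)^{k-j}$ by~\eqref{estimatesigmajJ}, and for $r \geq 1$ the factor $(1+r)/r$ is bounded above by $2$, so $|\sigma_{j,J}(r)| \lesssim 1$ uniformly in $r \geq 1$.

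Combining these two facts gives
\[
|\bX_J h_t(r)| \;\lesssim\; h_t(r)\, \sum_{j=1}^{k} \left(1 + \frac{1+r}{t}\right)^{\!j} \;\lesssim\; \left(1 + \frac{1+r}{t}\right)^{\!k}\! h_t(r),
\]
since $1 + (1+r)/t \geq 1$ forces the $j=k$ term to dominate the geometric-type sum. There is no real obstacle here; the argument is essentially a bookkeeping exercise combining the three previously established tools. The only point that requires a moment of care is checking that the factor $((1+r)/r)^{k-j}$ coming from $\sigma_{j,J}$ is harmless precisely because we have restricted to $r \geq 1$ — this is exactly why the small-$r$ regime (where $(1+r)/r$ blows up) must be treated separately later and gives rise to the more intricate distinction between $\Psi_k$ and $\tilde\Psi_k$ in Proposition~\ref{prop:spaceder}.
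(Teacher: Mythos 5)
Your argument is exactly the one the paper gives: decompose via Lemma~\ref{Faamisto}, bound the radial derivatives $h_t^{(j)}$ by Theorem~\ref{teo:main}~(1) using $\Psi_j(r,t)\asymp(1+(1+r)/t)^j$ for $r\geq 1$, bound $\sigma_{j,J}$ by~\eqref{estimatesigmajJ} and note that $(1+r)/r$ is bounded for $r\geq 1$, then observe that the $j=k$ term dominates the sum. Correct and essentially verbatim the paper's proof.
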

\begin{proof}
By combining Lemma~\ref{Faamisto}, Theorem~\ref{teo:main}~(1) and~\eqref{estimatesigmajJ} one gets, for $r\geq 1$,
\[
|\bX_J h_{t}(r)| \lesssim \sum_{j=1}^{k} |h_{t}^{(j)}|  |\sigma_{j,J}| \lesssim h_{t}(r) \sum_{j=1}^{k}\left(1+ \frac{1+r}{t}\right)^{j} \lesssim  \left(1+ \frac{1+r}{t}\right)^{k}h_{t}(r)
\]
which is the claimed estimate.
\end{proof}
The simple argument contained in Proposition~\ref{prop:dermag1} above does not work when $r\leq 1$. Since the derivatives of the distance are singular at $0$, indeed, the same reasoning leads to 
\[
|\bX_J h_{t}(r)| \lesssim h_{t}(r) \sum_{j=1}^{k} \Psi_{j}(r,t) r^{j-k},
\]
namely to an estimate which is in turn singular at $0$. Since $h_{t}$ is smooth, such a bound is too rough; and it suggests that some cancellation should occur among a number of terms. This is what we show in Proposition~\ref{propodd} below, before which we need a certain amount of preliminaries.

We begin by introducing some functions which will be used throughout the remainder of the section. For $b, q\in \N$, write
\[
\Phi_{0,2q,t}= h_{t}^{(2q+1)}, \qquad \Phi_{2b,2q,t} (r)=  \sum_{\ell=0}^{2b-1} \frac{(-1)^{\ell} r^{\ell}}{\ell!}h_{t}^{(2q+\ell+1)}(r), \qquad b\geq 1. 
\]

\begin{lemma}\label{lemmaphi}
 Suppose $b,q\in \N$. Then for all $t>0$ and $r\leq 1$
 \[
|\Phi_{2b,2q,t}(r)|\lesssim r^{2b} \Psi_{2b+2q+1}(r,t) h_{t}(r).
\]
\end{lemma}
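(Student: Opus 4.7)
The plan is to begin with the telescoping identity
\[
\Phi_{2b,2q,t}'(r)=-\frac{r^{2b-1}}{(2b-1)!}\,h_t^{(2q+2b+1)}(r),
\]
obtained by differentiating the sum defining $\Phi_{2b,2q,t}$ term-by-term: each derivative contributes two pieces, and all but the last cancel in pairs. Since $h_t$ is smooth and radial on the Damek--Ricci space $S$, its expansion at the origin involves only even powers of $r$, so $h_t^{(2q+1)}(0)=0$ and hence $\Phi_{2b,2q,t}(0)=0$. Integrating the derivative identity from $0$ to $r$ yields the representation
\[
\Phi_{2b,2q,t}(r)=-\frac{1}{(2b-1)!}\int_0^r s^{2b-1}\,h_t^{(2q+2b+1)}(s)\,ds.
\]

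Next I would apply Theorem~\ref{teo:main}(1): for $s\le r\le 1$ and the odd index $2q+2b+1$, one has $|h_t^{(2q+2b+1)}(s)|\lesssim s\bigl(1+1/\sqrt{t}+s/t\bigr)^{2q+2b}(1+1/t)\,h_t(s)$. The factor of $s$ extracted from $\Psi_{2q+2b+1}(s,t)$ combines with $s^{2b-1}$ in the integrand to give $s^{2b}$, and monotonicity of $(1+1/\sqrt{t}+s/t)^{2q+2b}$ in $s$ allows factoring out its value at $s=r$. The estimate thus reduces to proving $\int_0^r s^{2b}\,h_t(s)\,ds \lesssim r^{2b+1}\,h_t(r)$.

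The main obstacle is precisely this integral inequality in the regime $t\le 1$, $r>\sqrt{t}$: here $h_t(s)\approx h_t(r)\,e^{(r^2-s^2)/(4t)}$ for $s\le r$, so the integrand concentrates near $s\sim\sqrt{t}$ where $h_t$ is essentially unsuppressed, whereas the right-hand side carries the sharp Gaussian decay $e^{-r^2/(4t)}$. A naive triangle-inequality bound therefore loses an exponential factor, and the cancellation in the alternating sum defining $\Phi_{2b,2q,t}$ must be exploited. To this end, I would expand $h_t(r)=\sum_{k\ge 0} c_k(t)\,r^{2k}$ (only even powers, by smoothness and radiality) and substitute into the definition of $\Phi_{2b,2q,t}$; the partial binomial identity $\sum_{\ell=0}^{2b-1}(-1)^\ell\binom{N}{\ell}=-\binom{N-1}{2b-1}$ forces the low-order terms to collapse, giving
\[
\Phi_{2b,2q,t}(r)=-\frac{r^{2b+1}}{(2b-1)!}\sum_{m\ge 0}\frac{(2b+2q+2+2m)!}{(2b+2q+1+2m)(2m+1)!}\,c_{b+q+1+m}(t)\,r^{2m},
\]
which vanishes to order $2b+1$ at the origin, matching the target power of $r$. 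Bounding each coefficient by $|c_k(t)|\lesssim (1+1/t)^k h_t(0)/(2^k k!)$---obtained from Lemma~\ref{lem: heatderapprox} at $r=0$, using $\mathcal{R}^k h_t(0)=2^k k!\,c_k(t)$---and resumming the resulting series while identifying the implicit Gaussian factor (so that $h_t(0)$ is replaced by $h_t(r)$ up to bounded corrections) yields the claimed bound $r^{2b}\Psi_{2b+2q+1}(r,t)h_t(r)$.
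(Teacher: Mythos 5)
Your derivation of the integral representation
\[
\Phi_{2b,2q,t}(r)=-\frac{1}{(2b-1)!}\int_0^r s^{2b-1}\,h_t^{(2q+2b+1)}(s)\,ds
\]
via telescoping and $\Phi_{2b,2q,t}(0)=h_t^{(2q+1)}(0)=0$ is correct and matches the paper's formula. You also correctly observe that bounding $\Psi_{2q+2b+1}(s,t)$ by its value at $s=r$ and invoking $\int_0^r s^{2b}h_t(s)\,ds\lesssim r^{2b+1}h_t(r)$ works only when $h_t$ is essentially constant on $[0,r]$, that is, when $r^2\bigl(1+\tfrac{1+r}{t}\bigr)\le 1$; this is precisely the paper's first case.

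Where the proposal goes wrong is the second regime (roughly $t\le 1$, $r>\sqrt{t}$). You jump to the conclusion that one must \emph{exploit the cancellation} in the alternating sum, and propose to do this by Taylor-expanding $h_t(r)=\sum_k c_k(t)r^{2k}$, using the partial binomial identity to collapse the low-order terms, and then resumming. This has two serious problems. First, the only coefficient control you have is $|c_k(t)|\asymp (1+1/t)^k h_t(0)/(2^k k!)$ from Lemma~\ref{lem: heatderapprox}, but the implicit constants there are $C_k$-dependent with no stated growth control; using just an upper bound of this form and summing termwise produces a factor on the order of $e^{cr^2/t}$ (the wrong sign) rather than $e^{-r^2/(4t)}$, and recovering the Gaussian decay from the alternating signs would require far more quantitative information about the $c_k(t)$'s than you have. ``Identifying the implicit Gaussian factor'' is where the argument actually lives, and as written it is not an argument.

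Second — and this is the decisive point — the cancellation is not needed in this regime. The paper's proof of the second case makes no use of the integral representation or of any cancellation: it simply applies Theorem~\ref{teo:main}~(1) to each of the $2b$ terms in the definition of $\Phi_{2b,2q,t}$. In the region $r\le 1$, $r^2\bigl(1+\tfrac{1+r}{t}\bigr)>1$, one has $\Psi_k(r,t)\approx r^k\bigl(1+\tfrac{1+r}{t}\bigr)^k$ for every $k$, so
\[
|\Phi_{2b,2q,t}(r)|\lesssim \sum_{\ell=0}^{2b-1} r^{\ell}\,r^{2q+\ell+1}\bigl(1+\tfrac{1+r}{t}\bigr)^{2q+\ell+1}h_t(r)
\approx r^{2q+4b-1}\bigl(1+\tfrac{1+r}{t}\bigr)^{2q+2b}h_t(r),
\]
the largest contribution coming from $\ell=2b-1$. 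Multiplying and dividing by $r^2\bigl(1+\tfrac{1+r}{t}\bigr)>1$ then gives $r^{2q+4b+1}\bigl(1+\tfrac{1+r}{t}\bigr)^{2q+2b+1}h_t(r)=r^{2b}\Psi_{2q+2b+1}(r,t)h_t(r)$. So the subtle-looking alternating structure of $\Phi$ is used only in the first case (through the integral representation, which gains the extra factor of $r$); in the second case a crude triangle inequality is already sharp, because the Gaussian decay is already baked into $h_t(r)$ on the right-hand side.
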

\begin{proof}
The case $b=0$ is part of Theorem~\ref{teo:main}~(1). If $b \geq 1$, one proves by induction that 
\begin{align}\label{per parti}
     \Phi_{2b,2q,t}(r)=-\frac{1}{(2b-1)!}\int_0^r s^{2b-1}h_t^{(2b+2q+1)}(s) \, \dd s.
\end{align} 
Suppose $t>0$ and $r\leq 1$. We distinguish two cases, either \begin{align}\label{caso 1}
    r^2\bigg(1 + \frac{1+r}{t}\bigg)\le 1
\end{align} or 
\begin{align}\label{caso 2}
    r^2\bigg(1 + \frac{1+r}{t}\bigg)>1. 
\end{align}
Assume~\eqref{caso 1}, and observe that this is equivalent to $\frac{r^2}{t} \le 1-r$. Therefore, in this case $ \e^{-\frac{r^2}{4t}}  \approx 1$, and by~\eqref{boundsht}, for $r\leq 1$
\begin{align*}
     h_t(0) \approx h_t(r).
\end{align*} 
By \eqref{per parti} and Theorem~\ref{teo:main}~(1)
\begin{align*}
    |\Phi_{2b,2q,t}(r)|&\lesssim \int_0^r s^{2b-1}|h_t^{(2b+2q+1)}(s)| \ ds \\ 
    &\lesssim \int_0^r s^{2b-1}s \bigg(1+\frac{1+s}{t}\bigg)^{b+q+1}h_t(s) \ ds \\ 
    &\lesssim  r^{2b+1}\bigg(1+\frac{1+r}{t}\bigg)^{b+q+1}h_t(r),
\end{align*} 
as desired. Now assume \eqref{caso 2}. By Theorem~\ref{teo:main}~(1)
\begin{align*}
   |\Phi_{2b,2q,t}(r)| &\lesssim \sum_{\ell=0}^{2b-1} r^{\ell+2q+\ell+1}\bigg(1+ \frac{1+r}{t}\bigg)^{2q+\ell+1}h_t(r)\\ 
   &=r^{2q+1}\bigg(1+ \frac{1+r}{t}\bigg)^{2q+1} h_t(r)\sum_{\ell=0}^{2b-1} r^{2\ell}\bigg(1+ \frac{1+r}{t}\bigg)^{\ell}\\ 
   &\approx r^{2q+4b-1}\bigg(1+ \frac{1+r}{t}\bigg)^{2q+2b} h_t(r) \\ 
   &<r^{2q+4b+1}\bigg(1+ \frac{1+r}{t}\bigg)^{2q+2b+1} h_t(r),
\end{align*} as desired. In the  last inequality we have used \eqref{caso 2}. This concludes the proof.
\end{proof}

For $k\in \N$, $k\geq 1$, and $J\in \{ 0, \dots, n-1\}^{k}$, define now the functions
\[
\Upsilon_{2j+1,J}= \sigma_{2j+1,J}  - \sum_{\ell=0}^{j-1} \frac{r^{2j-2\ell} }{(2j-2\ell)!} \Upsilon_{2\ell+1,J}, \qquad j =0, \dots, \big[\tfrac{k-1}{2}\big]-1
\]
($\sigma_{j,J}$ are those of~\eqref{sigmajJ}), as well as
\[
\Upsilon_{k-1,J}= \sigma_{k-1,J} \qquad \mbox{if $k$ is even}, \qquad \Upsilon_{k,J}= \sigma_{k,J} \qquad \mbox{if $k$ is odd}.
\]
Then, define also
\[
\Xi_{2j,J} = \sigma_{2j,J} + \sum_{\ell=0}^{j-1}\frac{r^{2j-2\ell-1}}{(2j-2\ell-1)!}\Upsilon_{2\ell+1,J}, \qquad j= 1, \dots,  \big[\tfrac{k-1}{2}\big].
\]
Observe that all the functions $\Upsilon_{2j+1,J}$'s and $\Xi_{2j,J}$'s implicitly depend on $k$, since so does $J$. Therefore, their definitions above are well posed.

\begin{lemma}\label{xismooth}
 Suppose $k\in \N$  and $J\in \{ 0, \dots, n-1\}^{k}$. Then the functions $\Xi_{2j,J}$, $j=1,\dots, \big[\tfrac{k-1}{2}\big]$, are smooth.
\end{lemma}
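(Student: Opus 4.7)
The plan is to probe the $\Xi_{2j,J}$'s by applying $\bX_{J}$ to the smooth radial test functions $g_{j}\coloneqq r^{2j}/(2j)!$, for $j\in\N$, where $r$ is viewed as a function on $S$. First I would observe that each $g_{j}$ is smooth on $S$, which rests on the smoothness of $r^{2}$ on all of $S$. The latter follows from the identity $r^{2} = (\cosh r - 1)\,G(\cosh r - 1)$, where $G(s) = [\cosh^{-1}(1+s)]^{2}/s$ extends to a smooth positive function on $[0,\infty)$ (because $\cosh^{-1}(1+s) = \sqrt{2s}\,\psi(s)$ for an analytic $\psi$ with $\psi(0)=1$), together with the smoothness of $\cosh r$ on $S$ guaranteed by~\eqref{coshd}. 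In particular $\bX_{J}g_{j}$ is smooth on $S$ for every $j$.

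The heart of the argument will be the algebraic identity
\begin{equation}\label{xi-plan-id}
\bX_{J}g_{j} \;=\; \sum_{\ell=1}^{j}\frac{r^{2(j-\ell)}}{(2(j-\ell))!}\,\Xi_{2\ell,J}, \qquad 1 \leq j \leq \big[\tfrac{k-1}{2}\big].
\end{equation}
Granted~\eqref{xi-plan-id}, the lemma follows by induction on $j$: at $j=1$ it reduces to $\Xi_{2,J} = \bX_{J}g_{1}$, which is smooth; and the inductive step solves~\eqref{xi-plan-id} for $\Xi_{2j,J}$, expressing it as $\bX_{J}g_{j}$ minus a sum of smooth functions $r^{2(j-\ell)}/(2(j-\ell))!$ times $\Xi_{2\ell,J}$, smooth by the inductive hypothesis. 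Here smoothness of all powers of $r^{2}$ on $S$ is crucial.

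To establish~\eqref{xi-plan-id}, I would apply Lemma~\ref{Faamisto} to $g_{j}$; since $g_{j}^{(\ell)} = r^{2j-\ell}/(2j-\ell)!$ for $\ell \leq 2j$, and $2j \leq 2\big[\tfrac{k-1}{2}\big] \leq k-1 < k$, this gives $\bX_{J}g_{j} = \sum_{\ell=1}^{2j}\frac{r^{2j-\ell}}{(2j-\ell)!}\sigma_{\ell,J}$. I would then substitute the definition of $\Xi_{2\ell,J}$ and unfold the recursion for $\Upsilon_{2h+1,J}$ on the right hand side of~\eqref{xi-plan-id}, and verify coefficient-wise in $\sigma_{m,J}$ that the two sides agree. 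The even-index match is transparent; the odd-index match reduces, after reindexing, to the binomial identity $\sum_{q=0}^{N}\binom{2N+1}{2q+1} = 2^{2N}$, where $N = j-h-1$ is tied to the coefficient of $\sigma_{2h+1,J}$. This combinatorial bookkeeping is the main obstacle, but is routine. A favorable circumstance is that $\Xi_{2j,J}$ with $j\leq\big[\tfrac{k-1}{2}\big]$ only invokes $\Upsilon_{2\ell+1,J}$ for $\ell\leq j-1$, strictly below the top odd index at which the separate definition $\Upsilon_{k,J}=\sigma_{k,J}$ (resp.\ $\Upsilon_{k-1,J}=\sigma_{k-1,J}$) applies; hence only the recursive formula for $\Upsilon$ is used throughout the argument.
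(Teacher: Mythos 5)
Your proposal is correct and follows essentially the same route as the paper: your identity~\eqref{xi-plan-id} is precisely the paper's~\eqref{eqXi}, and the induction on $j$ starting from $\Xi_{2,J}=\tfrac12\bX_J(r^2)$ is identical. One small bookkeeping remark: the clean binomial identity $\sum_{q=0}^{N}\binom{2N+1}{2q+1}=2^{2N}$ you invoke drops out directly if one substitutes $\sigma_{2p+1,J}=\sum_{q\le p}\tfrac{r^{2p-2q}}{(2p-2q)!}\Upsilon_{2q+1,J}$ (the paper's~\eqref{sigmaUps}) into the Faà di Bruno expansion of $\bX_J g_j$ and matches coefficients of the $\Upsilon_{2q+1,J}$'s — which is what the paper does, phrased as $\sum_m\gamma_m=0$ — rather than inverting the $\Upsilon$ recursion to expand the $\Upsilon$'s back into $\sigma$'s, where the coefficients are less transparent.
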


\begin{proof}
We claim that for $j=1,\dots, \big[\tfrac{k-1}{2}\big]$
\begin{equation}\label{eqXi0}
\Xi_{2j,J} = \frac{1}{(2j)!}\bX_{J}(r^{2j}) - \sum_{\ell=1}^{j-1} \frac{1}{(2j-2\ell)!} r^{2j-2\ell}\Xi_{2\ell,J},
\end{equation}
which amounts to
\begin{equation}\label{eqXi}
 \frac{1}{(2j)!}\bX_{J}(r^{2j}) = \sum_{\ell=1}^{j} \frac{ r^{2j-2\ell}}{(2j-2\ell)!}\Xi_{2\ell,J}.
\end{equation}
Since by~\eqref{sigmajJ}
\[
\Xi_{2,J} =  \sigma_{2,J} + r \sigma_{1,J} = \frac{1}{2} \bX_J(r^{2}),
\]
whence $\Xi_{2,J}$ is smooth by the smoothness of $r^{2}$, the smoothness of $\Xi_{2j,J}$ for $j\geq 2$ will follow by~\eqref{eqXi0}.

We now prove~\eqref{eqXi}. First, we observe that by definition of $\Upsilon_{2j+1,J}$
\begin{equation}\label{sigmaUps}
 \sigma_{2\ell+1,J} =  \sum_{p=0}^{\ell} \frac{ r^{2\ell-2p}}{(2\ell-2p)!} \Upsilon_{2p+1,J},
\end{equation}
so that, since $k>2j$ and by Lemma \ref{Faamisto},
\begin{align*}
\frac{1}{(2j)!}\bX_{J}(r^{2j}) 
&= \sum_{\ell=1}^{2j} \frac{r^{2j-\ell} }{(2j-\ell)!} \sigma_{\ell,J}\\
& = \sum_{\ell=1}^{j} \frac{r^{2j-2\ell}}{(2j-2\ell)!}  \sigma_{2\ell,J}  + \sum_{\ell=1}^{j} \frac{r^{2j-2\ell+1}}{(2j-2\ell+1)!}  \sigma_{2\ell-1,J} \end{align*}
where we have just split the sum into odd and even indices respectively. We now make use of~\eqref{sigmaUps} to get
\begin{align*}
\frac{1}{(2j)!}\bX_{J}(r^{2j}) 
& = \sum_{\ell=1}^{j} \frac{r^{2j-2\ell}}{(2j-2\ell)!}  \sigma_{2\ell,J}  + \sum_{\ell=1}^{j} \frac{r^{2j-2\ell}}{(2j-2\ell+1)!}\sum_{p=0}^{\ell-1} \frac{r^{2\ell-2p-1} }{(2\ell-2p-2)!} \Upsilon_{2p+1,J}\\
& = \sum_{\ell=1}^{j} \frac{r^{2j-2\ell}}{(2j-2\ell)!} \bigg\{  \sigma_{2\ell,J}  + \frac{1}{(2j-2\ell+1)}\sum_{p=0}^{\ell-1} \frac{r^{2\ell-2p-1} }{(2\ell-2p-2)!} \Upsilon_{2p+1,J} \bigg\}.
\end{align*}
We now consider the term in brackets, which equals
\begin{align*}
& \sigma_{2\ell,J} + \sum_{p=0}^{\ell-1}\frac{r^{2\ell-2p-1}}{(2\ell-2p-1)!}\Upsilon_{2p+1,J} \\
 &\quad \qquad  - \sum_{p=0}^{\ell-1}r^{2\ell-2p-1}\Upsilon_{2p+1,J} \bigg(\frac{1}{(2\ell-2p-1)!} - \frac{1}{(2j-2\ell+1)(2\ell-2p-2)!} \bigg) \\
 & \;\; = \Xi_{2\ell,J} - \sum_{p=0}^{\ell-1}r^{2\ell-2p-1}\Upsilon_{2p+1,J} \bigg(\frac{1}{(2\ell-2p-1)!} - \frac{1}{(2j-2\ell+1)(2\ell-2p-2)!} \bigg).
 \end{align*}
We are then left with showing that the sum
\[
\sum_{\ell=1}^{j} \frac{r^{2j-2\ell}}{(2j-2\ell)!}\sum_{p=0}^{\ell-1}r^{2\ell-2p-1}\Upsilon_{2p+1,J} \bigg(\frac{1}{(2\ell-2p-1)!} - \frac{1}{(2j-2\ell+1)(2\ell-2p-2)!} \bigg)
\]
vanishes. After switching the orders of summation, one gets
 \begin{align*}
  \sum_{p=0}^{j-1} r^{2j-2p-1}\Upsilon_{2p+1,J}  \sum_{\ell=p+1}^{j} \beta_{\ell}(p,j), 
   \end{align*} 
   where
\[
\beta_{\ell}(p,j) = \frac{1}{(2j-2\ell)!(2\ell-2p-1)!} - \frac{1}{(2j-2\ell+1)!(2\ell-2p-2)!}.
\]
It remains to realize that the inner sum over the $\beta_{\ell}$'s is zero. We first rescale it so that
   \begin{align}\label{sommagamma}
&    \sum_{\ell=p+1}^{j}\beta_\ell(p,j)  = \sum_{m=1}^{N} \gamma_{m}, 
   \end{align}
   where  $N=j-p$ and (we do not stress the dependence of $\gamma_{m}$ on $N$)
   \[
\gamma_{m} =   \frac{1}{(2N-2m)!(2m-1)!} - \frac{1}{(2N-2m+1)!(2m-2)!}.
   \]
 If $N=2h+1$ is odd, then the sum in~\eqref{sommagamma} vanishes because $\gamma_{m} = - \gamma_{2h+2-m}$ for all $m=1,\dots, h $ while $\gamma_{h+1}=0$. If $N=2h$ is even, then the sum in~\eqref{sommagamma} vanishes because $\gamma_{m} = - \gamma_{2h+1-m}$ for all $m=1,\dots, h $. 
\end{proof}

We are now ready to prove the key result which will allow us to get the estimates in Proposition~\ref{prop:spaceder} for small radii. Roughly speaking, we ``smoothen'' the singularities coming from the derivatives of $r$ in the Faà di Bruno formula by introducing the functions $\Phi$'s above, and then highlight the cancellations occurring in what is left by means of the functions $\Xi$'s.

\begin{proposition}\label{propodd}
Suppose $k\in\N$  and $J\in \{ 0,\dots, n-1\}^{k}$. Then
\begin{itemize}
\item[i)] $|\Upsilon_{2j+1,J}(r)|\lesssim r^{-k+2j+1}$ for $r\leq 1$ and $j=0, \dots, \big[\frac{k-1}{2}\big]$;
\item[ii)] if $k$ is odd, then
\begin{align}\label{keydecdisp}
\bX_{J} h_{t} &=  \sum_{j=0}^{(k-1)/2} \Phi_{k-2j-1,2j,t} \Upsilon_{2j+1,J} +\sum_{j=1}^{(k-1)/2} h_{t}^{(2j)} \Xi_{2j,J};
\end{align}
\item[iii)] if $k$ is even, then
\begin{align}\label{keydecpari}
\bX_{J} h_{t} &=  \sum_{j=0}^{k/2-1} \Phi_{k-2j-2,2j,t} \Upsilon_{2j+1,J} +\sum_{j=1}^{k/2-1} h_{t}^{(2j)} \Xi_{2j,J} + h_{t}^{(k)} \sigma_{k,J}.
\end{align} 
\end{itemize}
\end{proposition}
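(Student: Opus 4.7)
The plan is to establish (i) by induction on $j$ and then derive the decompositions (ii) and (iii) starting from the Faà di Bruno expansion $\bX_J h_t = \sum_{j=1}^{k} h_t^{(j)}\sigma_{j,J}$ of Lemma~\ref{Faamisto}, substituting strategically in terms of the $\Upsilon_{2j+1,J}$'s and $\Xi_{2j,J}$'s.

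For (i), when $j=0$ the recursive sum is empty, so $\Upsilon_{1,J}=\sigma_{1,J}$ and the bound $|\Upsilon_{1,J}(r)|\lesssim r^{1-k}$ is immediate from \eqref{estimatesigmajJ}. For $1\leq j\leq [(k-1)/2]-1$, the recursive definition, the bound $|\sigma_{2j+1,J}(r)|\lesssim r^{2j+1-k}$ from \eqref{estimatesigmajJ}, and the inductive hypothesis $|\Upsilon_{2\ell+1,J}(r)|\lesssim r^{2\ell+1-k}$ combine to give, for $r\leq 1$,
\[
|\Upsilon_{2j+1,J}(r)|\lesssim r^{2j+1-k} + \sum_{\ell=0}^{j-1} r^{2j-2\ell}\,r^{2\ell+1-k} \lesssim r^{2j+1-k}.
\]
The boundary case $j=[(k-1)/2]$ is handled directly from the separate definitions $\Upsilon_{k,J}=\sigma_{k,J}$ (odd $k$) or $\Upsilon_{k-1,J}=\sigma_{k-1,J}$ (even $k$), together with \eqref{estimatesigmajJ}.

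For (ii) and (iii), I would split the Faà di Bruno sum into odd- and even-index contributions. On odd indices $j\leq [(k-1)/2]-1$ I substitute the inversion identity~\eqref{sigmaUps}, namely $\sigma_{2j+1,J}=\sum_{p=0}^{j}\frac{r^{2j-2p}}{(2j-2p)!}\Upsilon_{2p+1,J}$, obtained in the proof of Lemma~\ref{xismooth}. On even indices $j\leq [(k-1)/2]$ I would use the rearrangement $\sigma_{2j,J}=\Xi_{2j,J}-\sum_{\ell=0}^{j-1}\frac{r^{2j-2\ell-1}}{(2j-2\ell-1)!}\Upsilon_{2\ell+1,J}$ of the definition of $\Xi_{2j,J}$. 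The remaining highest-index terms must be held out of the substitution: for $k$ odd, the contribution $h_t^{(k)}\sigma_{k,J}$ is written as $h_t^{(k)}\Upsilon_{k,J}$ via the boundary definition; for $k$ even, the top odd-index term becomes $h_t^{(k-1)}\Upsilon_{k-1,J}$ and the top even-index term $h_t^{(k)}\sigma_{k,J}$ is left as is. After switching the order of summation, for each $p$ the coefficient of $\Upsilon_{2p+1,J}$ splits as a difference of two sums, over even and odd offsets $\ell$ respectively, which interleave into a single alternating series
\[
\sum_{\ell=0}^{L_p}\frac{(-1)^{\ell} r^{\ell}}{\ell!}h_t^{(\ell+2p+1)};
\]
by the definition of $\Phi_{2b,2q,t}$ this equals exactly $\Phi_{k-2p-1,2p,t}$ (odd $k$) or $\Phi_{k-2p-2,2p,t}$ (even $k$), yielding \eqref{keydecdisp}--\eqref{keydecpari}.

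The main obstacle will be the combinatorial bookkeeping in this last step: the upper limit $L_p$ has to stop precisely at $k-2p-2$ for $k$ odd and $k-2p-3$ for $k$ even, so as to match the truncation at $\ell=2b-1$ in the definition of $\Phi_{2b,2q,t}$. Had one naively applied the inversion identity to the top-index $\sigma$, the series would acquire an extra tail term $\frac{r^{k-2p-1}}{(k-2p-1)!}h_t^{(k)}$ that overshoots $\Phi$; reserving the top-index $\sigma$-term outside the substitution via the non-recursive boundary definition of $\Upsilon_{k,J}$ (or $\Upsilon_{k-1,J}$) is exactly what enforces the correct truncation, and is the structural reason behind this boundary case in the definition of $\Upsilon$.
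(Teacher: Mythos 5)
Your proof is correct and follows essentially the same strategy as the paper: part~(i) is the same one-line induction via \eqref{estimatesigmajJ}, and parts~(ii)--(iii) rest on the inversion identity~\eqref{sigmaUps} and the defining relation for $\Xi_{2j,J}$, with the top-index term(s) kept outside the substitution via the non-recursive boundary definition of $\Upsilon$, exactly as in the paper. The paper organises the cancellation as an iterative add-and-subtract scheme whereas you substitute all at once and then swap the order of summation, but the coefficients that emerge are identical, so this is a notational repackaging rather than a different argument.
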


\begin{proof}
Since $|\sigma_{j,J}| \lesssim r^{-(k -j)}$ when $r\leq 1$ by~\eqref{estimatesigmajJ}, the estimate $|\Upsilon_{2j+1,J}|\lesssim r^{-k+2j+1}$ follows and (i) is proved.

We first consider the case when $k$ is odd. We assume $k\geq 3$, otherwise the statement is trivial. By Lemma~\ref{Faamisto},
\begin{align*}
\bX_{J} h_{t} 
&= \sum_{j=1}^{k} h_{t}^{(j)}  \sigma_{j,J}  \nonumber \\
&= \sum_{j=1}^{k-1}h_{t}^{(j)}  \sigma_{j,J}   + \Phi_{0,k-1,t} \sigma_{k,J} = R_{1} + \Phi_{0,k-1,t}\Upsilon_{k,J},
\end{align*}
where $R_{1}$ is the above sum over $j=1,\dots, k-1$. We now proceed as follows. We start from $\sigma_{1,J}$, namely the most singular term; we add and subtract the quantity
\[
\sigma_{1,J} \left( \sum_{\ell=1}^{k-2} \frac{(-1)^{\ell} r^{\ell}}{\ell!}h_{t}^{(\ell+1)} \right).
\]
The part that we add, together with the term corresponding to $j=1$ in $R_{1}$, gives $\sigma_{1,J}\Phi_{k-1,0,t}$. Then we distribute each term of the part that we subtract to the term with same order of derivative in $h_{t}$ in the sum over the remaining $j\geq 2$. To be more explicit, we get
\begin{align*}
R_{1} 
&= \sigma_{1,J}\Phi_{k-1,0,t} + h_{t}^{(2)}\big( \sigma_{2,J} + r \sigma_{1,J}\big) + \sum_{j=3}^{k-1} h_{t}^{(j)} \bigg( \sigma_{j,J} - \frac{(-1)^{j-1}}{(j-1)!} r^{j-1} \sigma_{1,J}\bigg)\\
& =  \Upsilon_{1,J}\Phi_{k-1,0,t} + h_{t}^{(2)} \Xi_{2,J} + R_{3},
\end{align*}
where $R_{3}$ is the above sum over $j=3,\dots, k-1$. We proceed recursively. We will now add and subtract the quantity
 \[
\bigg( \sigma_{3,J}   - \frac{1}{2!} r^{2}\Upsilon_{1,J}\bigg) \left( \sum_{\ell=1}^{k-4} \frac{(-1)^{\ell} r^{\ell}}{\ell!}h_{t}^{(\ell+3)} \right)
\]
so to get
\begin{align*}
R_{3} 
&= \Phi_{k-3,2,t}\bigg( \sigma_{3,J}  - \frac{1}{2!} r^{2} \Upsilon_{1,J}\bigg)  + h_{t}^{(4)}\bigg[ \sigma_{4,J}  + \frac{1}{3!} r^{3} \Upsilon_{1,J} + r \bigg( \sigma_{3,J}  - \frac{1}{2!} r^{2} \sigma_{1,J} \bigg)\bigg] \\
& \quad + \sum_{j=5}^{k-1} h_{t}^{(j)} \bigg[ \sigma_{j,J}   - \frac{(-1)^{j-1}}{(j-1)!} r^{j-1} \sigma_{1,J} - \frac{(-1)^{j-3}}{(j-3)!} r^{j-3}\bigg( \sigma_{3,J}  - \frac{1}{2!} r^{2} \sigma_{1,J}\bigg) \bigg]\\
& =  \Phi_{k-3,2,t}\Upsilon_{3,J} + h_{t}^{(4)} \Xi_{4,J} + R_{5},
\end{align*}
where 
\[
 R_{5}=\sum_{j=5}^{k-1} h_{t}^{(j)} \bigg[ \sigma_{j,J} - \frac{(-1)^{j-1}}{(j-1)!} r^{j-1} \Upsilon_{1,J} - \frac{(-1)^{j-3}}{(j-3)!} r^{j-3}\Upsilon_{3,J} \bigg].
\]
If we do this $(k-1)/2$ times, we obtain precisely~\eqref{keydecdisp}.
   \smallskip
   
 We now consider the case when $k$ is even. The proof is similar and we omit some details.   We assume $k\geq 4$, since when $|J|=2$
\[
\bX_{J} h_{t}=h_{t}^{(1)}\sigma_{1,J} + h_{t}^{(2)}\sigma_{2,J} = \Phi_{0,0,t}\sigma_{1,J} + h_{t}^{(2)}\sigma_{2,J}
\]
and there is nothing to prove.
 
By Lemma \ref{Faamisto},
\begin{align*}
\bX_{J} h_{t} 
&= \sum_{j=1}^{k} h_{t}^{(j)}  \sigma_{j,J}  \nonumber \\
&= \sum_{j=1}^{k-2}h_{t}^{(j)}  \sigma_{j,J}   + h_{t}^{(k-1)} \sigma_{k-1,J}+ h_{t}^{(k)}  \sigma_{k,J}  \nonumber \\
&= P_{1} + h_{t}^{(k-1)} \Upsilon_{k-1,J} +  h_{t}^{(k)}  \sigma_{k,J}.
\end{align*}
We now proceed as in the odd case. We first add and subtract the quantity
\[
\sigma_{1,J} \left( \sum_{\ell=1}^{k-3} \frac{(-1)^{\ell} r^{\ell}}{\ell!}h_{t}^{(\ell+1)} \right).
\]
The part that we add, together with the term corresponding to $j=1$ in $R_{1}$, gives $\sigma_{1,J}\Phi_{k-2,0,t}$. Then we distribute each term and proceed recursively as before; at the second step we add and subtract the quantity
 \[
\bigg( \sigma_{3,J}   - \frac{1}{2!} r^{2}\Upsilon_{1,J}\bigg) \left( \sum_{\ell=1}^{k-5} \frac{(-1)^{\ell} r^{\ell}}{\ell!}h_{t}^{(\ell+3)} \right),
\]
and so on; if we do this $k/2-1$ times, we obtain precisely~\eqref{keydecpari}. The proof is then complete.
\end{proof}

\begin{corollary}\label{corodd}
Suppose $k\in\N$ and $J \in \{ 0,\dots, n-1\}^{k}$. Then
\[
|\bX_{J} h_{t}| \lesssim  \tilde \Psi_{k}(r,t) h_{t}(r), \qquad t>0, \, r\leq 1.
\]
\end{corollary}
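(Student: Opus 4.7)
My plan is to plug the decompositions \eqref{keydecdisp} and \eqref{keydecpari} from Proposition~\ref{propodd} into the left-hand side and estimate each summand separately. All the tools are in place: Lemma~\ref{lemmaphi} controls the $\Phi$-factors, Proposition~\ref{propodd}~(i) the $\Upsilon$-factors, Theorem~\ref{teo:main}~(1) the standalone radial derivatives $h_t^{(j)}$, and Lemma~\ref{xismooth} (together with the compactness of the closed ball $\{r\leq 1\}$) gives a uniform bound $|\Xi_{2j,J}|\lesssim 1$ for $r\leq 1$. The whole point of the careful bookkeeping in Proposition~\ref{propodd} is that the singularities of $\Upsilon_{2j+1,J}$ at $r=0$ are matched by the vanishing of the corresponding $\Phi$-factors.

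First I would handle a generic summand of the form $\Phi_{k-2j-\epsilon,2j,t}\Upsilon_{2j+1,J}$, where $\epsilon=1$ if $k$ is odd and $\epsilon=2$ if $k$ is even. Lemma~\ref{lemmaphi} (including the case $b=0$, which reduces to Theorem~\ref{teo:main}~(1)) gives $|\Phi_{k-2j-\epsilon,2j,t}(r)|\lesssim r^{k-2j-\epsilon}\Psi_{k-\epsilon+1}(r,t)h_t(r)$, while Proposition~\ref{propodd}~(i) gives $|\Upsilon_{2j+1,J}(r)|\lesssim r^{-(k-2j-1)}$. The powers of $r$ collapse to $r^{1-\epsilon}$, leaving a uniform bound $r^{1-\epsilon}\Psi_{k-\epsilon+1}(r,t)h_t(r)$. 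The terms $h_t^{(2j)}\Xi_{2j,J}$ I would dispatch by applying Theorem~\ref{teo:main}~(1) together with the boundedness of $\Xi_{2j,J}$, getting $|h_t^{(2j)}\Xi_{2j,J}|\lesssim \Psi_{2j}(r,t)h_t(r)$. The boundary term $h_t^{(k)}\sigma_{k,J}$, present only in the even case, is similar, using $|\sigma_{k,J}|\lesssim 1$ from Theorem~\ref{teo:derd}.

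Then I would verify that every bound just produced is absorbed into $\tilde\Psi_k(r,t)h_t(r)$, via the explicit descriptions of $\Psi_k$ and $\tilde\Psi_k$ for $r\leq 1$ collected right after \eqref{equivaAnk}. The odd case is relatively painless: the first sum contributes $\Psi_k h_t\leq\tilde\Psi_k h_t$ (using $r(1+1/t)\leq 1+r/t$ for $r\leq 1$), and the second sum is dominated by $\Psi_{k-1}h_t\leq\tilde\Psi_k h_t$. In the even case the first sum gives $r^{-1}\Psi_{k-1}(r,t)h_t(r)$, and the crucial observation is that $k-1$ is then odd, so the explicit factor of $r$ baked into $\Psi_{k-1}$ cancels the $r^{-1}$ singularity to yield $(1+1/\sqrt t+r/t)^{k-2}(1+1/t)h_t(r)$; this is in turn dominated by $(1+1/\sqrt t+r/t)^k h_t(r)=\tilde\Psi_k(r,t) h_t(r)$ using $1+1/t\lesssim(1+1/\sqrt t+r/t)^2$. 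The second sum and the boundary term stay safely inside $\tilde\Psi_k h_t$.

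The principal technical point---and the reason why Proposition~\ref{propodd} was engineered so carefully---is precisely this last cancellation: individually, the bounds on $\Phi$ and $\Upsilon$ yield an $r^{-1}$ blowup in the even case, and only the parity-driven factor of $r$ hidden in $\Psi_{k-1}$ (odd index) rescues the estimate. Keeping the parities aligned at every stage is the main bookkeeping obstacle I expect; beyond that, the proof is an exercise in assembling the preceding results.
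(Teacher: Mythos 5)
Your proof is correct and follows the same strategy as the paper's: plug the decompositions~\eqref{keydecdisp}/\eqref{keydecpari} from Proposition~\ref{propodd} into $\bX_J h_t$, control the $\Phi$-factors via Lemma~\ref{lemmaphi}, the $\Upsilon$-factors via Proposition~\ref{propodd}~(i), the $\Xi$-factors via Lemma~\ref{xismooth}, and the leftover radial derivatives via Theorem~\ref{teo:main}~(1). The paper spells out only the odd case and dismisses the even one as ``similar''; you have correctly identified and carried out the one extra step there, namely that the $r^{-1}$ left over after cancelling the powers of $r$ in $\Phi$ and $\Upsilon$ is absorbed by the explicit factor $r$ in $\Psi_{k-1}$ (with $k-1$ odd), using also $1+\tfrac1t\lesssim\bigl(1+\tfrac1{\sqrt t}+\tfrac rt\bigr)^2$.
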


\begin{proof}
Observe that $| \Xi_{2j,J}| \leq C$ for some $C=C(k)$, all $j=1, \dots, \big[\frac{k-1}{2}\big]$ and $r\leq 1$, as they are smooth by Lemma~\ref{xismooth}. Then one needs to combine Proposition~\ref{propodd} with Lemma~\ref{lemmaphi} and Theorem~\ref{teo:main}~(1).  Suppose, e.g., that $k$ is odd. Then, for $r\leq 1$,
\begin{align}\label{derpsitilde}
|\bX_{J} h_{t}(r)|
&\lesssim \sum_{j=0}^{(k-1)/2} |\Phi_{k-2j-1,2j,t}(r)| |\Upsilon_{2j+1,J}(r)| +\sum_{j=1}^{(k-1)/2} |h_{t}^{(2j)}(r)| \nonumber \\
& \lesssim h_{t}(r) \bigg(\sum_{j=0}^{(k-1)/2} \Psi_{k}(r,t)+\sum_{j=1}^{(k-1)/2} \Psi_{2j}(r,t)\bigg)\\
& \lesssim h_{t}(r) \tilde \Psi_{k}(r,t).\nonumber
\end{align}
The case when $k$ even is similar, and can be proved in the exact same manner.
\end{proof}
Corollary~\ref{corodd} above and Proposition~\ref{prop:dermag1} prove the first part of Proposition~\ref{prop:spaceder}, as we shall explicitly observe below. 

It remains to show the second part, namely that when the derivatives are not mixed one gets a better estimate; namely $\tilde \Psi_{k}$ can be replaced by $\Psi_{k}$. This is unfortunately not possible, in general, if the derivatives are mixed, as we shall see in Remark~\ref{rem:nor} below.

\begin{proposition}\label{prop:extrar}
Suppose $k,\ell \in \mathbb N$, and $j\in \{0,\dots, n-1\}$. Then there exists a bounded positive function $\psi_{k,\ell}$ on $[0,1]$ such that
\[
     |\bX_{j}^{2k+1}({r^{2\ell}})|\lesssim r \, {\psi_{k,\ell}}(r), \qquad r \leq 1.
\]
\end{proposition}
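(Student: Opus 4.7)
My plan is to prove that $F := \bX_j^{2k+1}(r^{2\ell})$ is smooth in a neighborhood of $e$ and satisfies $F(e)=0$; this forces $|F(p)| \le C\, r(p)$ for $r(p) \le 1$ by the mean value theorem applied in any chart around $e$, and then the claim follows on taking $\psi_{k,\ell} \equiv C$.

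To establish smoothness, I would use that $\alpha := \cosh r - 1$ is smooth on $S$ by~\eqref{coshd} and vanishes at $e$, while the function $\mathrm{arccosh}(1+\alpha)^2$ admits the convergent Taylor expansion $2\alpha - \alpha^2/3 + O(\alpha^3)$ around $0$ and hence extends to a smooth function through $\alpha=0$. Consequently $r^2$, $r^{2\ell}$, and $F$ are all smooth near $e$.

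The core step is to prove $F(e)=0$, which I would do by a case analysis on $j$. For $1 \le j \le \mu$, I plan to use the involution $\tau_1(x,z,a) = (-x, z, a)$: since $\cosh r$ depends on $x$ only through $|x|^2$, it is preserved by $\tau_1$; combined with the skew-symmetry of $J_{u_k}$ (which makes $(J_{u_k}x, e_j)$ an odd linear function of $x$), a direct computation in the formula for $\bX_j$ yields $(\tau_1)_* \bX_j = -\bX_j$ as vector fields on $S$. Naturality then gives $F \circ \tau_1 = (-1)^{2k+1} F = -F$, and since $\tau_1(e) = e$, this forces $F(e)=0$. The case $\mu+1 \le j \le n-1$ is analogous using $\tau_2(x,z,a)=(x,-z,a)$, for which $(\tau_2)_*\bX_{\mu+k} = -\bX_{\mu+k}$ is immediate from $\bX_{\mu+k} = a\partial_{z_k}$. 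For $j = 0$ no such global reflection is convenient, but I would instead use that $\bX_0 = a\partial_a$ is the infinitesimal generator of the one-parameter subgroup $\{(0,0,e^s) : s \in \R\}$ through $e$, on which $r(0,0,e^s) = |s|$; thus $F(e)$ equals the $(2k+1)$-th derivative in $s$ at $s=0$ of the smooth function $s^{2\ell}$, which vanishes by parity since $2k+1$ is odd and $s^{2\ell}$ is an even power.

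The main point requiring care will be the identity $(\tau_1)_*\bX_j = -\bX_j$ as a global equality of vector fields, which rests on the fact that $(J_{u_k})_{j\ell}$ vanishes for $\ell=j$ and more generally that $J_{u_k}$ is linear in $x$; once this bookkeeping is done, the rest of the argument is a standard combination of smoothness and Taylor's theorem.
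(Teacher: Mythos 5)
Your proof is correct, and it takes a genuinely different and more conceptual route than the paper's. The paper reduces to $\ell=1$ via Faà di Bruno (by noting that in any partition $m_1+\cdots+m_q=2k+1$ at least one $m_h$ is odd), then writes $r^2=(\cosh r-1)g(r^2)$ with $g$ smooth and performs a Leibniz-rule induction on $k$, leaning on the explicit formulas~\eqref{dercoshm} for iterated derivatives of $\cosh r$. You sidestep all of that bookkeeping with a parity-at-the-identity argument: after establishing that $r^2$ (hence $F:=\bX_j^{2k+1}(r^{2\ell})$) is smooth near $e$ -- which your $\alpha=\cosh r-1$ local-inversion argument handles cleanly, and which also follows from the algebraic formula~\eqref{coshd} together with the smooth extension of $(\operatorname{arccosh}(1+\alpha))^2$ through $\alpha=0$ -- you show $F(e)=0$ using either a reflection isometry $\tau$ fixing $e$ with $\tau_*\bX_j=-\bX_j$ (for $j\neq 0$), or the restriction to the one-parameter subgroup $s\mapsto(0,0,e^s)$ along which $r=|s|$ (for $j=0$). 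Combined with $\tau^*\circ\bX_j=-\bX_j\circ\tau^*$ this gives $F\circ\tau=-F$ and hence $F(e)=0$; and for $j=0$ the identity $\bX_0^m f(e)=\frac{d^m}{ds^m}\big|_{s=0}f(0,0,e^s)$ valid for any left-invariant one-parameter flow reduces the claim to the vanishing of odd-order derivatives of the even function $s^{2\ell}$ at $0$. The mean value theorem in a chart (plus boundedness on $\{r\le 1\}$) then closes the argument. This is shorter and more robust than the paper's computation, since it would apply verbatim to any smooth function invariant under the relevant reflection. One minor cosmetic point: the parenthetical appeal to $(J_{u_k})_{jj}=0$ is a distraction -- only the linearity of $x\mapsto(J_{u_k}x,e_j)$ matters for the pushforward identity $(\tau_1)_*\bX_j=-\bX_j$, not the vanishing of its diagonal -- but this does not affect the correctness of your argument.
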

\begin{proof}
We begin by observing that 
\begin{align*}
    \cosh r -1 = r^2\bigg(\frac12 + \sum_{n=2}^\infty \frac{r^{2(n-1)}}{(2n)!}\bigg), \qquad r>0.
\end{align*}
If we write 
\[
g(r)= \frac{1}{\frac{1}{2}+\sum_{n=2}^\infty \frac{r^{n-1}}{(2n)!}},
\] 
then $g \in C^{\infty}([0,\infty))$ and $r^2= (\cosh r-1)g(r^2)$. 
By the Faà di Bruno formula~\eqref{Faa}
\[
\bX_{j}^{2k+1}( r^{2\ell}) =(2k+1)! \sum_{q=1}^{2k+1}\frac{1}{q!}  \bigg(\frac{\partial^{q} }{\partial s^{q}} s^{\ell} \bigg)(r^{2})  \sum_{m_{1}+\cdots + m_{q}= 2k+1}  \frac{\bX_{j}^{m_{1}}(r^{2})}{m_{1}!} \cdots \frac{\bX_{j}^{m_{q}}(r^{2})}{m_{q}!}.
\]
Since for all $q=1,\dots, 2k+1$ at least one $m_{h}$ in the inner sum, $h=1,\dots, q$, is odd, it is enough to prove that the statement holds when $\ell=1$.

If $k=0$  then $\bX_{j}(r^2)=2r \bX_{j} r$, and since $\bX_{j}r$ is bounded by Theorem~\ref{teo:derd} the statement follows. Assume that for some $k \ge 0$ the result holds. Then, 
\begin{equation}\label{XX}
\begin{split}
      \bX_{j}^{2k+3}(r^2)&=  \bX_{j}^{2k+3}((\cosh r-1)g(r^2))\\ 
      &= \sum_{h=0}^{2k+3} \binom{2k+3}{h} \bX_{j}^{2k+3-h}(\cosh r-1)\bX_{j}^{h}(g(r^2)).
      \end{split}
\end{equation}
Now we recall that  for every $m \in \mathbb N$ (see~\eqref{derivative0coshd} and the subsequent discussion)
\begin{equation}\label{dercoshm}
\begin{split}
    &\bX_0^{2m+1} \cosh r= \bX_0\cosh r, \\ 
    &\bX_0^{2(m+1)} \cosh r= \cosh r-|x|^2/4 \\ 
    &\bX_j^{5+m} \cosh r=0,    \qquad j \ne 0, \\ 
    &\bX_j^{3+m} \cosh r=0 \qquad j=\mu+1,...,n-1,\\
    & | \bX_j^3 \cosh r|\lesssim \sqrt{a}|x|  \lesssim r, \qquad j=1,...,\mu.
\end{split}
\end{equation} 
Suppose  $j=0$.  Then, by splitting the cases when $m$ is odd or even in~\eqref{XX} and by~\eqref{dercoshm}
\begin{align*}
     \bX_0^{2k+3}(r^2)&=(\cosh r-1)\bX_0^{2k+3}(g(r^2)) + \sum_{h=0}^{k+1}  \binom{2k+3}{2h}(\bX_0 \cosh r )\bX_0^{2h}(g(r^2))\\ 
     & \qquad +\sum_{h=0}^{k} \binom{2k+3}{2h+1}\bigg(\cosh r-\frac{|x|^2}{4}\bigg)\bX_0^{2h+1}(g(r^2)).
     \end{align*} 
Observe now that, if $r\leq 1$, then $|\bX_0 \cosh r|=|\sinh r| |\bX_0 r| \approx r |\bX_0 r|$ and $| \cosh r -1| \approx r^2$; while for $h=0, \dots, k$, by the Faà di Bruno formula
\begin{align}\label{oddg}
    \bX_0^{2h+1} g(r^2)= (2h+1)! \sum_{j=1}^{2h+1} \frac{1}{j!}g^{(j)}(r^2) \!\!\! \sum_{m_1+\cdots +m_j=2h+1} \frac{\bX^{m_1}(r^2)}{m_{1}!} \cdots \frac{\bX^{m_j}(r^2)}{m_j!},
\end{align} 
and since $2h+1$ is odd, at least one among the $m_q$'s  ($q=1, \dots, j$) is odd. Moreover, $m_q \leq 2k+1$ for all $q$'s, whence the statement follows by the inductive assumption.

Suppose now $j \ne 0$. By~\eqref{XX} and~\eqref{dercoshm}
\begin{align*}
\bX_j^{2k+3}(r^2)=   & (\cosh r-1) \bX_j^{2k+3}(g(r^2)) +  \sum_{h=1}^{4} \binom{2k+3}{h}(\bX_j^{h}\cosh r )\bX_j^{2k+3-h}(g(r^2)).
   \end{align*} 
We conclude by observing that $|\bX_j \cosh r|\approx r |\bX_j r|$ and $|\bX_j^3 \cosh r|\lesssim r$ by~\eqref{dercoshm}, while the terms corresponding to $h=2$ and $h=4$ can be estimated by means of~\eqref{oddg} as above.
\end{proof} 

\begin{proof}[Proof of Proposition~\ref{prop:spaceder}]
To get the first estimate, just combine Corollary~\ref{corodd} above and Proposition~\ref{prop:dermag1}. To get the second estimate, it is enough to observe that by Proposition~\ref{prop:extrar} and~\eqref{eqXi0}, if $J=\{j\}^{k}$ then $ |\Xi_{2\ell,J}(r)| \leq Cr$ for some $C=C(k)$ and all $\ell= 1, \dots , \big[\frac{k-1}{2}\big]$; thus, e.g.\ when $k$ is odd, an extra $r$ appears in the second sum in~\eqref{derpsitilde}, and this gives $\Psi$ in place of $\tilde \Psi$. The case $k$ even is analogous.
\end{proof}

We finally prove that in Proposition~\ref{prop:spaceder}, equivalently in Theorem~\ref{teo:main} (2), the function $\tilde \Psi$ cannot be replaced by $\Psi$ if one considers compositions of an odd amount of different vector fields. 
\begin{remark}\label{rem:nor}
Pick two integers $1\le \ell \le \mu$ and $\mu+1 \le m\le n-1$. Then
\[
\begin{split}
  \bX_\ell^{2} \bX_m  (r^2)
  &= (\bX_\ell^{2} \bX_m  \cosh r)g(r^2)+2 (\bX_\ell \bX_m \cosh r)\bX_\ell g(r^2) +(\bX_m \cosh r)\bX_\ell^{2}g(r^2)\\  & \qquad+(\bX_\ell^{2} \cosh r)\bX_m  g(r^2)+(\bX_\ell \cosh r)\bX_\ell \bX_m  g(r^2)\\ &\qquad  +(\bX_\ell \cosh r)\bX_\ell \bX_m  g(r^2)+(\cosh r-1)\bX_\ell^{2}\bX_m  g(r^2).
\end{split}
\]
Observe that all the terms in the right hand side above tend to zero as $r \to 0^+$, except $(\bX_\ell^{2} \bX_m \cosh r)g(r^2)$ for which the estimate 
\[
 |(\bX_\ell^{2}\bX_m \cosh r)g(r^2)|\approx  a \approx 1, \qquad r \leq 1
 \] 
 holds. Therefore, $| \bX_\ell^{2} \bX_m  (r^2)|\approx 1$ for $r \leq 1$.

Write then $J=(\ell, \ell, m)$ where $\ell$ and $m$ are as above and suppose $r \big(1+\frac{1+r}{t}\big) \leq 1$. In particular, $r \leq 1$ and  $r^{2}\big(1+\frac{1+r}{t}\big) \leq 1$. By Proposition~\ref{propodd} and~\eqref{eqXi}
\[
\bX_{J} h_{t} =  \sum_{j=0}^{1} \Phi_{3-2j-1,2j,t} \Upsilon_{2j+1,J} + \frac{1}{2}h_{t}^{(2)} \bX_{J}(r^{2})
\]
where, by Lemma~\ref{lemmaphi},
\[
\Bigg|\sum_{j=0}^{1} \Phi_{3-2j-1,2j,t} \Upsilon_{2j+1,J}  \Bigg| \lesssim  r \bigg(1+\frac{1+r}{t}\bigg)^{2}  h_{t}(r)
\]
while, since $| \bX_{J}  (r^2)|\approx 1$, if $(1+r)/t $ is large enough then by Proposition~\ref{prop: sharpness}
\[
\bigg|\frac{1}{2}h_{t}^{(2)} \bX_{J}(r^{2})\bigg| \gtrsim  \bigg(1+ \frac{1+r}{t}\bigg)h_{t}(r).
\] 
Therefore, e.g.\ in the regime when $t=\sqrt{r}$ and $r\to 0^{+}$, one has $(1+r)/t \to \infty$ and
\[
|\bX_{J} h_{t}(r) | \gtrsim \bigg(1+ \frac{1+r}{t}\bigg) h_{t}(r) \approx \tilde \Psi_{3}(r,t) h_{t}(r),
\]
whence the estimate $|\bX_{J} h_{t}(r) | \lesssim \Psi_{3}(r,t) h_{t}(r)$ does not hold.
\end{remark}
\subsection{Time derivatives}
We begin with the simple observation that since $\partial_{t}\e^{-t\Ls} = - \Ls \e^{-t\Ls} $ for all $t>0$, one has
\begin{equation}\label{timederL}
\partial_{t}^{k}h_{t} = (-1)^{k} \Ls^{k} h_{t}, \qquad t>0.
\end{equation}
Moreover, $\partial_{t}^{k}\bX h_{t} =\bX \partial_{t}^{k} h_{t}$ for any vector field $\bX$ in $\mathfrak{s}$. By~\eqref{timederL}, Proposition~\ref{prop:spaceder}, and the simple observation that $\tilde \Psi_{\ell} \lesssim \tilde \Psi_{h}$ if $\ell \leq h$,
\begin{align*}
|\partial_{t}^{k}h_{t} (r)|= | \Ls^{k} h_{t}(r)| \leq \sum_{ k \leq |J|\leq 2k}| \bX_J h_{t}(r)| \lesssim \tilde \Psi_{2k}(r,t) h_{t}(r).
\end{align*}
Analogously, we prove Theorem~\ref{teo:main}~(2).

\begin{proof}[Proof of Theorem~\ref{teo:main}~(2)]
Just observe that, for $k,m\in \N$ and $J\in \{0,\dots, n-1\}^{k}$
\[
|\partial_{t}^{m}\bX_{J} h_{t} (r)| = |\bX_{J} \Ls^{m} h_{t} (r)| \leq \sum_{ m+k \leq |I|\leq 2m+k}| \bX_I h_{t}(r)| \lesssim \tilde \Psi_{2k+m}(r,t)h_{t}(r)
\]
by Proposition~\ref{prop:spaceder}. The second  half of the statement is part of Proposition~\ref{prop:spaceder} itself.
\end{proof}

We conclude this section by showing that some of our results can be ``transferred'' to analogous estimates of the heat kernel of the so-called \emph{distinguished Laplacian} 
\[
\Delta =-\sum_{j=0}^{n-1} \bX_j^2,
\]
which is essentially self-adjoint on $L^{2}(\rho)$ and is related to the Laplace--Beltrami operator $\Ls$ by the identity, see e.g.~\cite[Proposition~2]{Ast},
\[
\delta^{-1/2}{\Delta}\,\delta^{1/2}f=(\Ls-{Q^2}/{4}) f
\]
for all (sufficiently regular) radial functions $f$ on $S$. In particular, if $h_{t}^{\Delta}$ denotes the heat kernel of $\Delta$, i.e.\ the convolution kernel of the heat semigroup $\e^{-t\Delta}$ generated by $\Delta$, then
\begin{equation}\label{relazionenuclei}
 h_{t}^{\Delta} = \delta^{1/2} \e^{t \frac{Q^{2}}{4}} h_{t}.
\end{equation}
Since the modular function $\delta$ is not radial, neither is $h_{t}^{\Delta}$. We have the following.

\begin{corollary}
Suppose $m,k\in \N$. There exists $C_{m,k}>0$ such that for all $J\in \{0,\dots, n-1\}^{k}$
\[
\bigg|  \frac{\partial^{m} }{\partial t^{m}}  \bX_{J} h_{t}^{\Delta}(\x)\bigg| \leq C_{m,k} \tilde \Psi_{2m+k}(r,t) \, h_{t}^{\Delta}(\x), \qquad \forall \, \x\in S,\, t>0.
\]
\end{corollary}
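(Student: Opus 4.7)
The plan is to reduce everything to Theorem~\ref{teo:main}~(2) applied to $h_t$ itself, exploiting the very explicit relationship $h_{t}^{\Delta} = \delta^{1/2}\e^{tQ^{2}/4} h_{t}$ from~\eqref{relazionenuclei}. The key structural observation is that the modular function $\delta(x,z,a)=a^{-Q}$ depends only on $a$, so $\delta^{1/2}$ is annihilated by every $\bX_{j}$ with $j\neq 0$, while $\bX_{0}\delta^{1/2} = -\tfrac{Q}{2}\,\delta^{1/2}$ since $\bX_{0}=a\partial_{a}$. Hence, setting $c_{0}=-Q/2$ and $c_{j}=0$ for $j\geq 1$, one has the operator identity $\bX_{j}\circ M_{\delta^{1/2}} = M_{\delta^{1/2}}\circ(\bX_{j}+c_{j})$, where $M_{\delta^{1/2}}$ denotes multiplication by $\delta^{1/2}$; the constants $c_{j}$ commute with everything in sight.

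Iterating this for $J\in\{0,\dots,n-1\}^{k}$ and expanding, I would obtain
\[
\bX_{J}(\delta^{1/2} g) \;=\; \delta^{1/2}\!\sum_{I\subseteq\{1,\dots,k\}} \bigg(\prod_{i\in I} c_{J_{i}}\bigg)\, \bX_{J|_{I^{c}}} g,
\]
where $\bX_{J|_{I^{c}}}$ is the composition, in the original order, of the $\bX_{J_{i}}$ for $i\notin I$. Since the time derivative and the space derivatives commute, and since $\partial_{t}^{m}(\e^{tQ^{2}/4}\,h_{t}) = \e^{tQ^{2}/4}\sum_{j=0}^{m}\binom{m}{j}(Q^{2}/4)^{m-j}\partial_{t}^{j}h_{t}$, applying $\partial_{t}^{m}\bX_{J}$ to~\eqref{relazionenuclei} gives
\[
\partial_{t}^{m}\bX_{J} h_{t}^{\Delta} \;=\; \delta^{1/2}\,\e^{tQ^{2}/4}\!\!\sum_{I,\,j} \alpha_{I,j}\, \partial_{t}^{j}\bX_{J|_{I^{c}}} h_{t},
\]
a finite linear combination with $0\leq j\leq m$, $|J|-|I|\leq k$, and coefficients $\alpha_{I,j}$ depending only on $m$, $k$, and $Q$.

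Theorem~\ref{teo:main}~(2) bounds each summand by $C\,\tilde\Psi_{2j+k-|I|}(r,t)\, h_{t}(r)$. Since $2j+k-|I|\leq 2m+k$, the desired estimate follows at once from the monotonicity $\tilde\Psi_{\ell}(r,t)\lesssim \tilde\Psi_{\ell'}(r,t)$ for $\ell\leq \ell'$, together with the identity $\delta^{1/2}\e^{tQ^{2}/4} h_{t} = h_{t}^{\Delta}$.

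The only step requiring a small verification is the monotonicity of $\tilde\Psi_{\ell}$ in $\ell$, which is where different parities interact. For $r>1$ (or $r\leq 1$ with $\ell,\ell'$ of equal parity), both functions are powers of the base $1+1/\sqrt{t}+r/t\geq 1$, so monotonicity is immediate. The only mildly delicate case is $r\leq 1$ with $\ell$ odd and $\ell'=\ell+1$ even, where a one-line check gives $\tilde\Psi_{\ell'}/\tilde\Psi_{\ell} = (1+1/\sqrt{t}+r/t)^{2}/(1+r/t)\geq 1$; the remaining transitions follow by chaining. This is really the only point beyond routine bookkeeping, and it causes no trouble.
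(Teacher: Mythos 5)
Your proof is correct and follows essentially the same route as the paper's: the crucial point in both is that $\delta^{1/2}$ is a character along the $\bX_j$'s (so $\bX_j\delta^{1/2}=c_j\delta^{1/2}$), which reduces the bound to Theorem~\ref{teo:main}~(2) for $h_t$ plus the elementary monotonicity $\tilde\Psi_\ell\lesssim\tilde\Psi_{\ell'}$ for $\ell\leq\ell'$. The only cosmetic difference is that you handle $\partial_t^m$ by Leibniz on $\e^{tQ^2/4}h_t$, while the paper first converts $\partial_t^m$ into $\Delta^m$ acting on $h_t^\Delta$; both bookkeepings yield the same terms once Theorem~\ref{teo:main}~(2) is invoked.
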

\begin{proof}
Observe first that since the modular function $\delta$ is a character of $S$,
\[
\bX_{j} \delta^{1/2} = c_{j} \delta^{1/2}, \qquad c_{j} = (\bX_{j}\delta^{1/2})(e), \qquad j\in \{0,\dots, n-1\}.
\]
Therefore, for all $k\in \N$ and $J\in \{0,\dots, n-1\}^{k}$
\[
\bX_{J} \delta^{1/2} = c_{J} \delta^{1/2}, \qquad c_{J} =  \prod_{j=1}^{k}(\bX_{J_{j}}\delta^{1/2})(e).
\]
Thus, by Proposition~\ref{prop:spaceder} and~\eqref{relazionenuclei}
\begin{align*}
|\partial_{t}^{m}\bX_{J} h_{t}^{\Delta} (\x)|  = | \bX_{J} \Delta^{m} h_{t}^{\Delta} (\x)|
&  \leq \e^{\frac{Q^{2}}{4}t}\sum_{|I| = 2m+k}| \bX_I ( \delta^{1/2}(x) h_{t}(r))|\\
& \lesssim \e^{\frac{Q^{2}}{4}t}  \delta^{1/2} (\x) \sum_{|I| \leq 2m+k}| \bX_{I}h_{t}(r)|\\
& \lesssim \tilde \Psi_{2m+k}(r,t )\, \e^{\frac{Q^{2}}{4}t}  \delta^{1/2} (\x)  h_{t}(r)\\
& =  \tilde \Psi_{2m+k}(r,t)   h_{t}^{\Delta}(\x).
\end{align*}
The proof is complete.
\end{proof}

\section{Asymptotics}\label{sec: 5}
   In this section we shall find the asymptotic behavior of all the radial derivatives of $h_{t}$ when $(1+r)/t$ is large, and in the particular case when $t$ is fixed and $r\to\infty$, also of the time derivatives. In other words, we will prove Theorem~\ref{teo:main}~(3). The proof of the first part is inspired by~\cite{GM}; we shall observe, however, that if one is interested only in the estimates for $r\to \infty$ and fixed $t>0$ then the proof can be slightly simplified, see Remark~\ref{GMsimplified} below.
   
   For notational convenience, we shall denote
  \[
\Rs_{p,q} =   (-\mathcal{R})^q \left(- \frac{1}{\sinh \frac r2}\frac{\partial}{\partial r}\right)^p, \qquad  \eta_{p,q}(r)=\Rs_{p,q} (-r^2).
    \]
 By simple computations, see e.g.~\cite[Eq. (5.23)]{ADY},
\begin{align}\label{asymgauss}
   \Rs_{p,q}  \e^{-\frac{r^2}{4t}}=\sum_{j=1}^{p+q} a_j(r) t^{-j} \e^{-\frac{r^2}{4t}},
\end{align}  
where (recall~\eqref{thetapq})
\begin{align*}
a_{p+q}(r)=4^{-(p+q)}\eta_{1,0}^p(r)\eta_{0,1}^q(r) 
&= 2^{-(p+q)} \bigg(\frac{r}{\sinh \frac{r}{2}} \bigg)^{p} \bigg( \frac{r}{\sinh r}\bigg)^{q}\\
&=(1+r)^{p}\big(\tfrac{1}{2} +r\big)^{q} \e^{-(\frac{p}{2}+q)r} \Theta_{p,q}(r),
\end{align*}
while for $j=1,...,p+q-1$,
\[
|a_j(r)|\lesssim (1+r)^j \e^{-(\frac{p}{2}+q)r}.
\]
Thus
\[
|a_j(r)|t^{-j}\lesssim \bigg(\frac{1+r}{t}\bigg)^j \e^{-(\frac{p}{2}+q)r}, \qquad j=1,...,p+q-1.
\]
By \eqref{asymgauss} we obtain, for $(1+r)/t \geq 1$,
\begin{align}\label{asintoticogauss}
   \Rs_{p,q} \e^{-\frac{r^{2}}{4t}}  =  \e^{-\frac{r^{2}}{4t}} \e^{-(q+\frac{p}{2})r}\bigg(\frac{1+r}{t}\bigg)^{p}\bigg(\frac{\frac{1}{2} +r}{t}\bigg)^{q} \Theta_{p,q}(r) \bigg[ 1+ O\bigg( \frac{t}{1+r}\bigg)\bigg].
\end{align} 
The above estimate~\eqref{asintoticogauss} is almost all we need, together with Lemmas~\ref{lem: radial derivates} and~\ref{lem: heatderapprox}, to prove the first part of Theorem~\ref{teo:main}~(3) when $\nu$ is even. When $\nu$ is odd, the following lemma will be particularly useful; see also~\cite[Lemma 5]{GM0}. Its proof is elementary and omitted. 

\begin{lemma}\label{GMinsp}
For $p,q\in \N$ define the function
\begin{align*}
E_{p,q}(r,s) &= (\cosh s -\cosh r)^{-\frac{1}{2}} \bigg(\frac{s}{\sinh s} \bigg)^{q} \bigg(\frac{s}{\sinh \frac{s}{2}} \bigg)^{p}\\ &\qquad \qquad \qquad - \sqrt{2} (s^{2} - r^{2})^{-\frac{1}{2}} \bigg(\frac{r}{\sinh r} \bigg)^{q+\frac{1}{2}} \bigg(\frac{r}{\sinh \frac{r}{2}} \bigg)^{p},
\end{align*}
and suppose $r>0$ and $s\in (r,r+2)$. Then
\begin{itemize}
\item[(i)] $|E_{p,q}(r,s) | \lesssim (s^{2}-r^{2})^{\frac{1}{2}} (1+r)^{p+q-\frac{1}{2}} \e^{-r(\frac{p}{2} + q + \frac{1}{2})}$;
\item[(ii)] $| \partial_{s}E_{p,q}(r,s) | \lesssim s (s^{2}-r^{2})^{-\frac{1}{2}} (1+r)^{p+q-\frac{1}{2}} \e^{-r(\frac{p}{2} + q + \frac{1}{2})}$.
\end{itemize}
\end{lemma}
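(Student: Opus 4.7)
The plan is to make the cancellation between the two terms of $E_{p,q}(r,s)$ at $s = r$ explicit. Using $\cosh u - \cosh r = 2\sinh\tfrac{u+r}{2}\sinh\tfrac{u-r}{2}$ one checks that
\[
\lim_{u\to r^+}\frac{u^2-r^2}{2(\cosh u - \cosh r)} = \frac{r}{\sinh r},
\]
so defining
\[
G(r,u) := \sqrt{\frac{u^2-r^2}{2(\cosh u-\cosh r)}}\,\biggl(\frac{u}{\sinh u}\biggr)^{q}\biggl(\frac{u}{\sinh(u/2)}\biggr)^{p}
\]
for $u > r$ and extending by continuity to $G(r,r) = (r/\sinh r)^{q+1/2}(r/\sinh(r/2))^{p}$, one obtains the clean representation
\[
E_{p,q}(r,s) = \sqrt{2}\,(s^2-r^2)^{-1/2}\bigl[G(r,s) - G(r,r)\bigr],
\]
in which neither term is singly singular at $s=r$.

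The crux of the proof is the pointwise estimate
\[
|\partial_u G(r,u)| \;\lesssim\; u\,(1+r)^{p+q-1/2}\,e^{-r(p/2+q+1/2)}, \qquad u \in [r,r+2],\; r > 0.
\]
To establish it, I would write $G = HF$ with $H(r,u) = \sqrt{(u^2-r^2)/(2(\cosh u-\cosh r))}$ and $F(u) = (u/\sinh u)^q(u/\sinh(u/2))^p$, and observe that $H^2$ factors as $[(u+r)/(2\sinh\tfrac{u+r}{2})]\cdot[(u-r)/(2\sinh\tfrac{u-r}{2})]$. Both $\partial_u \log H$ and $\partial_u \log F$ then become linear combinations of terms $\tfrac1y - \coth y$ with $y \in \{u,\, u/2,\, (u+r)/2,\, (u-r)/2\}$, and the expansions $\coth y = 1/y + y/3 + O(y^3)$ near $0$ and $\coth y = 1 + O(e^{-2y})$ at infinity give $|1/y - \coth y| \lesssim y/(1+y)$. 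A simple case analysis using $u \geq r$ and $u \in [r, r+2]$ shows $y/(1+y) \lesssim u/(1+r)$ for each admissible $y$, whence $|\partial_u \log G(r,u)| \lesssim u/(1+r)$ uniformly. A parallel argument based on $x/\sinh x \asymp 1$ for $x \leq 1$ and $x/\sinh x \asymp 2xe^{-x}$ for $x \geq 1$ (applied with $x$ ranging over the same four scales) yields $G(r,u) \asymp (1+r)^{p+q+1/2}e^{-r(p/2+q+1/2)}$ on the region, and multiplying the two bounds gives the claimed estimate for $|\partial_u G|$.

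The two conclusions of the lemma then follow quickly. For (i), the representation above and the pointwise bound give
\[
|E_{p,q}(r,s)| \;\leq\; \sqrt{2}\,(s^2-r^2)^{-1/2}\int_r^s |\partial_u G(r,u)|\,du \;\lesssim\; (s^2-r^2)^{-1/2}\cdot\tfrac{s^2-r^2}{2}\cdot(1+r)^{p+q-1/2}\,e^{-r(p/2+q+1/2)},
\]
using $\int_r^s u\,du = \tfrac12(s^2-r^2)$. For (ii), differentiating in $s$ gives
\[
\partial_s E_{p,q}(r,s) = -\sqrt{2}\,s\,(s^2-r^2)^{-3/2}[G(r,s)-G(r,r)] + \sqrt{2}\,(s^2-r^2)^{-1/2}\,\partial_s G(r,s);
\]
the first summand is controlled by the integral estimate above and the second by the pointwise bound at $u = s$, which together yield (ii).

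The main obstacle is the pointwise bound on $\partial_u G$, and specifically the extra factor $u$ it carries. This factor reflects the cancellation $\tfrac1y - \coth y = -y/3 + O(y^3)$ near $y = 0$, and the bookkeeping is most delicate when both $r$ and $u$ are small, where the four scales $y \in \{u,\, u/2,\, (u+r)/2,\, (u-r)/2\}$ are comparable but distinct; the uniform inequality $y/(1+y) \lesssim u/(1+r)$ is exactly what streamlines the combination into a single clean estimate. For large $r$ all four scales collapse to $r$ and the analysis is routine.
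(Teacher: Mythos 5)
Your proof is correct. Since the paper explicitly omits the proof of this lemma (merely labeling it ``elementary'' and pointing to Lemma~5 of the Giulini--Meda reference), there is no published argument to compare against, but your strategy is exactly the natural one and the details check out.

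A few confirmations. The rewriting $E_{p,q}(r,s) = \sqrt{2}(s^2-r^2)^{-1/2}[G(r,s)-G(r,r)]$ is correct, using the factorization
\[
\frac{u^2-r^2}{2(\cosh u - \cosh r)} = \frac{u+r}{2\sinh\frac{u+r}{2}}\cdot\frac{u-r}{2\sinh\frac{u-r}{2}}
\]
and L'H\^opital to identify $G(r,r)=(r/\sinh r)^{q+1/2}(r/\sinh(r/2))^p$. The logarithmic derivative $\partial_u\log G$ really is a fixed linear combination of $\tfrac1y-\coth y$ at $y\in\{u,\,u/2,\,(u+r)/2,\,(u-r)/2\}$; since $\coth y-\tfrac1y$ increases from $0$ to $1$ on $(0,\infty)$ with $\coth y-\tfrac1y\sim y/3$ near the origin, the bound $|\tfrac1y-\coth y|\lesssim y/(1+y)$ is valid; and the four inequalities $y/(1+y)\lesssim u/(1+r)$ hold on $r\le u\le r+2$ (for $y=(u-r)/2$ one uses $u-r<2$ when $r\ge 1$ and $u-r\le u$, $1+r\le 2$ when $r\le 1$). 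The size estimate $G(r,u)\asymp(1+r)^{p+q+1/2}e^{-r(p/2+q+1/2)}$ follows from $x/\sinh x\asymp(1+x)e^{-x}$ applied at the same four scales, noting that $(1+u)e^{-u}\asymp(1+r)e^{-r}$ and $(u-r)/(2\sinh\frac{u-r}{2})\asymp 1$ on the indicated range. Multiplying gives the pointwise bound on $\partial_u G$, and both conclusions of the lemma then follow from the fundamental theorem of calculus exactly as you write.
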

We are now ready to prove Theorem~\ref{teo:main}~(3).
    
\begin{proof}[Proof of Theorem~\ref{teo:main}~(3)]
By Lemmas~\ref{lem: radial derivates} and~\ref{lem: heatderapprox}
\[
    \frac{\partial^k}{\partial r^k} h_t(r) = \sinh^k r \Rs^k h_t(r) + O\bigg[ h_{t}(r) \bigg(\frac{1+r}{t}\bigg)^{k-1}\bigg].
\]
We claim that
    \begin{equation}\label{claimmain}
    \begin{split}
 \sinh^k r \Rs^k h_t(r) &=  (-1)^{k} \mathfrak{c}_{0} t^{-\frac12} \e^{-\frac{Q^2}{4}t -\frac{r^2}{4t}-\frac{Q}{2} r }  (\e^{-r}\sinh r)^{k}\bigg(\frac{1+r}{t}\bigg)^{\frac{\mu}{2}} \\ & \quad \qquad\times \bigg(\frac{\frac{1}{2} +r}{t}\bigg)^{k + \frac{\nu}{2}} \Theta_{\frac{\mu}{2},k+\frac{\nu}{2}}(r) \bigg[ 1+ O\bigg( \sqrt{\frac{t}{1+r}}\bigg)\bigg].
    \end{split}
\end{equation} 
Once the claim is proved, by~\eqref{boundsht}
\[
h_t(r) \bigg(\frac{1+r}{t}\bigg)^{k-1}= (-1)^{k} \sinh^k r \Rs^k h_t(r) O\bigg( \frac{t}{1+r}\bigg)
\]
and the first part of the theorem follows. Therefore, we shall prove~\eqref{claimmain}.
 
Suppose that $\nu$ is even. By~\eqref{asintoticogauss}
\begin{align*}
   \Rs^{k} h_t(r) 
   &= (-1)^{k} \mathfrak{c}_{0} t^{-\frac12} \e^{-\frac{Q^2}{4}t} \Rs_{\frac{\mu}{2},\frac{\nu}{2}+k}  \e^{-\frac{r^2}{4t}}\\
&= (-1)^{k} \mathfrak{c}_{0} t^{-\frac12} \e^{-\frac{Q^2}{4}t-\frac{r^2}{4t}-\frac{Q}{2} r - kr} \\
& \qquad \qquad \times \bigg(\frac{1+r}{t}\bigg)^{\frac{\mu}{2}}\bigg(\frac{\frac{1}{2} +r}{t}\bigg)^{k + \frac{\nu}{2}} \Theta_{\frac{\mu}{2},k+\frac{\nu}{2}}(r) \bigg[ 1+ O\bigg( \frac{t}{1+r}\bigg)\bigg],
\end{align*}
and the claim~\eqref{claimmain} follows.

Suppose now that $\nu$ is odd. In this case, it is notationally helpful to get rid of the function $\Theta$. By~\eqref{Rkht}, the claim follows if we prove
\begin{equation}\label{claimmain2}
 f_{k}(r) =  (2t)^{- k - \frac{\nu}{2} - \frac{\mu}{2}} \sqrt{\pi}  \bigg(\frac{r}{\sinh r} \bigg)^{k+\frac{\nu}{2}} \bigg(\frac{r}{\sinh \frac{r}{2}} \bigg)^{\frac{\mu}{2}} \e^{-\frac{r^{2}}{4t}} \bigg(1+O\bigg(\sqrt{\frac{t}{1+r}} \bigg)\bigg).
\end{equation}
By~\eqref{asintoticogauss}
  \begin{align*}
    f_{k}(r) &=(2t)^{- k - \frac{\nu+1}{2} - \frac{\mu}{2}} I_{k}(r)\bigg[ 1+O\bigg(\frac{t}{1+r}\bigg)\bigg] ,
  \end{align*}   
where 
\[
I_{k}(r) = \int_{r}^{\infty } \frac{1}{\sqrt{\cosh s-\cosh r}} \bigg(\frac{s}{\sinh \frac{s}{2}}\bigg)^{\frac{\mu}{2}}\bigg(\frac{s}{\sinh s}\bigg)^{k+\frac{\nu+1}{2}-1}s\, \e^{-\frac{s^{2}}{4t}}  \, ds.
\]
We split $I_{k}(r)$ into the sum of the integral $I_{k,1}(r)$ on $(r,r+2)$ and $I_{k,2}(r)$ on $(r+2,\infty)$. By Lemma~\ref{GMinsp}, whose notation we maintain throughout the proof,
 \begin{align*}
I_{k,1}(r)
 & =  \sqrt{2}  \bigg(\frac{r}{\sinh r} \bigg)^{k+\frac{\nu}{2}} \bigg(\frac{r}{\sinh \frac{r}{2}} \bigg)^{\frac{\mu}{2}}  \int_{r}^{r+2}(s^{2} - r^{2})^{-\frac{1}{2}}  s\, \e^{-\frac{s^{2}}{4t}}\, ds \\
 & \qquad \qquad + \int_{r}^{r+2} E_{\frac{\mu}{2}, k+\frac{\nu-1}{2}}(s,r) s\, \e^{-\frac{s^{2}}{4t}}\, ds.
 \end{align*}
With the change of variables $u = \frac{s^{2}-r^{2}}{4t}$,
 \begin{align*}
 \int_{r}^{r+2}(s^{2} - r^{2})^{-\frac{1}{2}}  s\, \e^{-\frac{s^{2}}{4t}}\, ds 
 &=\sqrt{t} \, \e^{-\frac{r^{2}}{4t}} \int_{0}^{\frac{r+1}{t}} \frac{1}{\sqrt{u}}  \e^{-u}\, du \\
 & = \sqrt{\pi t} \, \e^{-\frac{r^{2}}{4t}} \bigg(1+O\bigg(\sqrt{\frac{t}{1+r}} \bigg)\bigg),
 \end{align*}
 and as we shall see, this gives the main contribution to $I_{k}(r)$. Integrating by parts the second integral in $I_{k,1}$, as $s\, \e^{-\frac{s^{2}}{4t}} = -2t \frac{\partial }{\partial s} \e^{-\frac{s^{2}}{4t}}$,
 \begin{align*}
 \int_{r}^{r+2} &E_{\frac{\mu}{2}, k+\frac{\nu-1}{2}}(s,r) s\, \e^{-\frac{s^{2}}{4t}}\, ds  \\
 & = -2t\bigg(E_{\frac{\mu}{2}, k+\frac{\nu-1}{2}}(r+2,r) \e^{-\frac{(r+2)^{2}}{4t}} - \int_{r}^{r+2} \frac{\partial }{\partial s}  E_{\frac{\mu}{2}, k+\frac{\nu-1}{2}}(s,r) \e^{-\frac{s^{2}}{4t}}\, ds \bigg).
 \end{align*}
By Lemma~\ref{GMinsp}
  \begin{align*}
 \left|2t E_{\frac{\mu}{2}, k+\frac{\nu-1}{2}}(r+2,r) \e^{-\frac{(r+2)^{2}}{4t}}\right|  \lesssim \sqrt{t} \bigg(\frac{r}{\sinh r} \bigg)^{k+\frac{\nu}{2}} \bigg(\frac{r}{\sinh \frac{r}{2}} \bigg)^{\frac{\mu}{2}} \e^{-\frac{r^{2}}{4t}} \sqrt{\frac{t}{1+r}},
  \end{align*} 
  while
 \begin{align*}
 \bigg| 2t \int_{r}^{r+2} \frac{\partial }{\partial s}  & E_{\frac{\mu}{2}, k+\frac{\nu-1}{2}}(s,r) \e^{-\frac{s^{2}}{4t}}\, ds\bigg|\\
 & \lesssim t \e^{-\frac{r^{2}}{4t}}  (1+r)^{\frac{\mu}{2}+k+\frac{\nu}{2}-1} \e^{-r(\frac{\mu}{4} + k+\frac{\nu}{2})}  \int_{r}^{r+2} s (s^{2}-r^{2})^{-\frac{1}{2}}\, ds
 \\& \lesssim \sqrt{t} \bigg(\frac{r}{\sinh r} \bigg)^{k+\frac{\nu}{2}} \bigg(\frac{r}{\sinh \frac{r}{2}} \bigg)^{\frac{\mu}{2}} \e^{-\frac{r^{2}}{4t}} \sqrt{\frac{t}{1+r}} .
 \end{align*}
Summing up
\[
 I_{k,1}(r) =  \sqrt{2 \pi t}  \bigg(\frac{r}{\sinh r} \bigg)^{k+\frac{\nu}{2}} \bigg(\frac{r}{\sinh \frac{r}{2}} \bigg)^{\frac{\mu}{2}} \e^{-\frac{r^{2}}{4t}} \bigg(1+O\bigg(\sqrt{\frac{t}{1+r}} \bigg)\bigg).
\]
We now estimate $I_{k,2}(r)$. We integrate by parts as before, and we get that $I_{k,2}(r)$ is the sum of the boundary term, which we call $I_{k,3}(r)$, and an integral which we denote by $I_{k,4}(r)$. We get
\begin{align*}
|I_{k,3}(r) |& \lesssim t \frac{1}{\sqrt{\cosh (r+2)-\cosh r}} \bigg(\frac{r+2}{\sinh \frac{r+2}{2}}\bigg)^{\frac{\mu}{2}}\bigg(\frac{r+2}{\sinh (r+2)}\bigg)^{k+\frac{\nu+1}{2}-1} \e^{-\frac{(r+2)^{2}}{4t}}\\
& \lesssim \sqrt{t} \bigg(\frac{r}{\sinh \frac{r}{2}}\bigg)^{\frac{\mu}{2}}\bigg(\frac{r}{\sinh r}\bigg)^{k+\frac{\nu}{2}}  \e^{-\frac{r^{2}}{4t}} \sqrt{\frac{t}{1+r}},
\end{align*}
as well as
 \begin{align*}
|I_{k,4}(r) | &\lesssim  t \bigg| \int_{r+2}^{\infty }  \partial_{s} \bigg( \frac{1}{\sqrt{\cosh s-\cosh r}} \bigg(\frac{s}{\sinh \frac{s}{2}}\bigg)^{\frac{\mu}{2}}\bigg(\frac{s}{\sinh s}\bigg)^{k+\frac{\nu+1}{2}-1}\bigg) \e^{-\frac{s^{2}}{4t}}  \, ds\bigg| \\
  & \lesssim  t\int_{r+2}^{\infty } \frac{1}{\sqrt{\sinh s}}  \bigg(\frac{s}{\sinh \frac{s}{2}}\bigg)^{\frac{\mu}{2}}\bigg(\frac{s}{\sinh s}\bigg)^{k+\frac{\nu-1}{2}} \e^{-\frac{s^{2}}{4t}}\, ds\\
  & \lesssim \sqrt{t} \bigg(\frac{r}{\sinh r} \bigg)^{k+\frac{\nu}{2}} \bigg(\frac{r}{\sinh \frac{r}{2}} \bigg)^{\frac{\mu}{2}} \e^{-\frac{r^{2}}{4t}}  \sqrt{\frac{t}{1+r}}.
 \end{align*}
The claim~\eqref{claimmain2} follows, and the proof of the first part of the statement is complete.

Let us now restrict to the case $r\to \infty$ and $t>0$ fixed. An immediate consequence of what we just proved is that the second part of the statement holds for $m=0$. Thus, we are left with considering the case when $m>0$.

 Recall~\cite{ADY} that the radial part of $\Ls$ is
\begin{align*}
  \mathrm{rad}(\Ls)= - \frac{\partial^2}{\partial r^2} - \left\{\frac{\mu+\nu}{2}\coth{\frac r2}+ \frac{\nu}{2} \tanh{\frac r2}\right\} \frac{\partial}{\partial r}.
\end{align*}
We claim that for every $\ell \in \mathbb N$
\begin{align*}
    \bigg|\frac{\partial^\ell}{\partial r^\ell} \coth{\frac r2}\bigg|, \bigg|\frac{\partial^\ell}{\partial r^\ell} \tanh{\frac r2}\bigg| \lesssim 1, \qquad r>1.
\end{align*} 
Indeed, this easily follows by the identities
\begin{align*}
    \tanh s= \frac{\e^{s}}{\e^{s}+\e^{-s}}-\frac{\e^{-s}}{\e^{s}+\e^{-s}}, \qquad  \coth {s}=\frac{\e^{s}}{\e^{s}-\e^{-s}}+\frac{\e^{-s}}{\e^{s}-\e^{-s}}, \qquad s>0,
\end{align*}
and, for $r>0$,
\begin{align*}
     \frac{\e^{r/2}}{\e^{r/2}+\e^{-r/2}}&= \sum_{j=0}^\infty (-1)^j\e^{-rj}, \qquad &&\frac{\e^{-r/2}}{\e^{r/2}+\e^{-r/2}}=\sum_{j=0}^\infty (-1)^j\e^{-r(j+1)},\\ 
     \frac{\e^{r/2}}{\e^{r/2}-\e^{-r/2}}&=\sum_{j=0}^\infty \e^{-rj}, \  &&\frac{\e^{-r/2}}{\e^{r/2}-\e^{-r/2}}=\sum_{j=0}^\infty \e^{-r(j+1)}.
\end{align*}
This implies that  for $m \in \mathbb N$
\[
    (-1)^m\mathrm{rad}(\Ls)^m= \frac{\partial^{2m}}{\partial r^{2m}}+\sum_{j=1}^{2m-1} c_{j,m}(r) \frac{\partial^j}{\partial r^j},
\]
where $c_{j,m}$ are suitable bounded functions. Therefore
\begin{align*}
  \frac{\partial^{m} }{\partial t^{m}}   \frac{\partial^{k} }{\partial r^{k}} h_{t} (r) 
  &= (-1)^{m} \Ls^{m} \frac{\partial^{k} }{\partial r^{k}} h_{t} (r) \\
  & = (-1)^m \mathrm{rad}(\Ls)^m \frac{\partial^{k} }{\partial r^{k}}  h_t(r) =\bigg[\frac{\partial^{2m+k}}{\partial r^{2m+k}}+\sum_{j=1}^{2m+k-1} c_{j,m}(r) \frac{\partial^j}{\partial r^j}\bigg] h_t(r).
\end{align*} Hence, the asymptotic expansion follows directly by the case $m=0$.
    \end{proof}

\begin{remark}\label{GMsimplified}
When $r\to \infty$ and $t>0$ is fixed, the proof of the asymptotic expansion for the radial derivatives of $h_{t}$ can be proved in a slightly simpler way when $\nu$ is odd. In this case, indeed, one can prove that for $p,q\in \N$
\begin{align}\label{claimasymp}
 \int_{r}^{\infty} \frac{\sinh s}{\sqrt{\cosh s-\cosh r}} \Rs_{p,q} \e^{-\frac{s^{2}}{4t}}  \dd s \sim   \sqrt{\pi}  \e^{-\frac{r^{2}}{4t}} \e^{-(q-\frac{1}{2}+\frac{p}{2})r }  \bigg(\frac{r}{t} \bigg)^{p+q-\frac{1}{2}} .
     \end{align}
 To show this, let $I(r,t)$ be the above integral. By~\eqref{asintoticogauss}, if $r\to \infty$,
\begin{align*}
     I(r,t)  =  t^{-p-q} \e^{-\frac{r^{2}}{4t}} \e^{-(q+\frac{p}{2})r } I_{1}(r,t)\bigg[1 + O\bigg(\frac{t}{r}\bigg)\bigg],
\end{align*}
where
\begin{align*}
I_{1}(r,t) = \int_{0}^{\infty} \frac{\sinh (s+r)}{\sqrt{\cosh (s+r)-\cosh r}} \e^{-\frac{s^{2}}{4t}} \e^{-\frac{rs}{2t}} \e^{-(q+\frac{p}{2})s}(s+r)^{p+q} \, ds. \end{align*}
Observe now that
\begin{align*}
 \frac{\sinh (s+r)}{\sqrt{\cosh (s+r)-\cosh r}} = 2^{-1/2} \e^{s+\frac{r}{2}}\frac{1}{\sqrt{\e^{s}-1}} \big(1+O(\e^{-2r})\big),
\end{align*}
the remainder being uniform in $s>0$, so that
\[
I_{1}(r,t) = 2^{-1/2} \e^{\frac{1}{2}r} I_{2}(r,t) \big(1+O(\e^{-2r})\big),
\]
where
\[
I_{2}(r,t) = \int_{0}^{\infty}  \frac{1}{\sqrt{\e^{s}-1}}  \, \e^{-\frac{s^{2}}{4t}} \e^{-\frac{rs}{2t}} \e^{-(q+\frac{p}{2}-1)s}(s+r)^{p+q} \, ds.
\]
By means of the binomial expansion,
\begin{align*}
I_{2}(r,t) 
&= \sum_{j=0}^{p+q} \binom{p+q}{j} r^{p+q-j} \int_{0}^{\infty}   \frac{1}{\sqrt{\e^{s}-1}}  \e^{-\frac{s^{2}}{4t}} \e^{-\frac{rs}{2t}} \e^{-(q+\frac{p}{2}-1)s}s^{j} \, ds,
\end{align*}
and by Laplace's method~\cite{Erd}, for $j=0, \dots, p+q$ one gets 
    \[
  \int_{0}^{\infty}   \frac{1}{\sqrt{\e^{s}-1}}  \e^{-\frac{s^{2}}{4t}} \e^{-\frac{rs}{2t}} \e^{-(q+\frac{p}{2}-1)s}s^{j} \, ds \sim \Gamma\bigg(j+ \frac{1}{2}\bigg) \bigg(\frac{2t}{r} \bigg)^{j+\frac{1}{2}}, \qquad r\to \infty.
    \]
Since the choice $j=0$ provides the leading term,~\eqref{claimasymp} follows.
 \end{remark}

\section{Maximal and Ornstein--Uhlenbeck operators}\label{sec: 7}
In this section we provide two applications of the estimates given in Theorem~\ref{teo:main}. They concern the weak type $(1,1)$ of certain maximal operators associated with $h_{t}$ and its time derivatives, and the discreteness of the spectrum of certain Riemannian Ornstein--Uhlenbeck operators on $S$.
\subsection{Weak type $(1,1)$ of maximal operators}
In this section we study the weak type $(1,1)$ boundedness of the maximal operators 
\[
\mathcal{H}_j f =\sup_{t>0}\bigg|t^{j}\, \frac{\partial^{j}}{\partial {t}^{j}} \e^{-t\Ls}f\bigg| , \quad  \widetilde{\mathcal{H}}_j f= \sup_{t>0}\bigg|\min(1,t)^{j} \frac{\partial^{j}}{\partial {t}^{j}} \e^{-t\Ls}f\bigg|, \qquad j\in \N, \; f \in L^1.
\]
The operators $\mathcal{H}_{j}$ have a longstanding history which dates back at least to Stein~\cite{SteinLPS}, and their $L^{p}$-boundedness, $1<p<\infty$, has been already studied in depth (see, e.g.,~\cite{L, FP}). As for endpoint results, on real hyperbolic spaces it is known that $\mathcal{H}_1$ is of weak type $(1,1)$, while $\mathcal H_{j}$ is not if $j\geq 2$; cf.~\cite[Theorem 1]{LS}. By means of Theorem~\ref{teo:main}~(2), we shall extend this boundedness result for $\mathcal{H}_{1}$ to Damek--Ricci spaces, and prove that instead $\widetilde{\mathcal{H}}_j$ is of weak type $(1,1)$ for all $j$'s. This motivates and justifies their introduction. Observe indeed that $\widetilde{\mathcal{H}}_j f \leq \mathcal{H}_j f $ for all $j\geq 1$.
\begin{theorem}\label{theorem:maximal}
The operators $ \mathcal{H}_1$ and $\widetilde{\mathcal{H}}_j$ ($j\geq 1$) are of weak type $(1,1)$.
\end{theorem}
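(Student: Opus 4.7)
The plan is to split both $\mathcal{H}_1$ and $\widetilde{\mathcal{H}}_j$ into a local part (supremum over $t \in (0,1]$) and a global part (supremum over $t \geq 1$), and to treat these two pieces with different tools.

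For the local parts, the kernel of interest is $t^j \partial_t^j h_t(r)$ with $t \in (0, 1]$. I would apply a Calderón--Zygmund decomposition on the locally doubling space $(S, d, \lambda)$, which requires: (i) the uniform $L^2$-boundedness of $t^j\partial_t^j \e^{-t\Ls}$ for $t \in (0,1]$, which follows from the spectral theorem since the multiplier $t^j \lambda^j \e^{-t\lambda}$ is uniformly bounded on $[Q^2/4, \infty)$; and (ii) a Hörmander-type regularity estimate on the convolution kernel, which, after left-translation, reduces to pointwise bounds on the left-invariant space derivatives $\bX_\ell (t^j \partial_t^j h_t)$ for $\ell = 0,\dots,n-1$. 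Theorem~\ref{teo:main}~(2) with $k=1$ and $m=j$ furnishes precisely such bounds, and for $t \leq 1$ they are of the same form as the corresponding Euclidean ones, so the standard Calderón--Zygmund machinery (adapted to the locally doubling setting as in~\cite{GM, SV, A1}) applies and yields weak $(1,1)$.

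For the global part of $\widetilde{\mathcal{H}}_j$, since $\min(1,t)^j = 1$ for $t \geq 1$, I would prove the stronger assertion that $\Phi_j(r) := \sup_{t \geq 1} |\partial_t^j h_t(r)|$ lies in $L^1(\lambda)$; by Young's inequality this gives even strong type $(1,1)$. Using $|\partial_t^j h_t(r)| \lesssim \Psi_{2j}(r,t) h_t(r) \lesssim (1+r)^{2j} h_t(r)$ from Theorem~\ref{teo:main}~(2) and the Gaussian-type upper bound~\eqref{boundsht}, a straightforward optimisation in $t$ (the supremum being attained at $t \asymp \max(1, r/Q)$ for large $r$) shows that $\Phi_j(r)$ decays fast enough in $r$ to dominate the exponential volume growth $\sinh(r/2)^{\mu+\nu}\cosh(r/2)^{\nu} \asymp \e^{Qr}$ appearing in the radial integration formula.

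The main obstacle, and most delicate step, is the global part of $\mathcal{H}_1$, because $\sup_{t \geq 1}t|\partial_t h_t|$ is \emph{not} in $L^1(\lambda)$, so the previous direct convolution argument breaks down. My plan is to exploit the semigroup identity $\e^{-t\Ls} = \e^{-(t-1)\Ls} \e^{-\Ls}$ together with the regularising properties of $\e^{-\Ls}$, which sends $L^1(\lambda)$ continuously into $L^1(\lambda)\cap L^2(\lambda)$ by boundedness of $h_1$ and ultracontractivity. With $g := \e^{-\Ls} f$ and $s := t-1$, one writes $t\partial_t \e^{-t\Ls} f = (s+1)\partial_s \e^{-s\Ls} g$, splits $s+1 = s + 1$, and reduces the global part of $\mathcal{H}_1 f$ to a combination of the already-treated local piece applied to $g$ and an $L^2$-maximal estimate on $g$ that is accessible via spectral calculus (exploiting the spectral gap of $\Ls$ at $Q^2/4$). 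Combining this with the previous steps completes the weak $(1,1)$ bound for $\mathcal{H}_1$.
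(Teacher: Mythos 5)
Your split into a local ($t\leq 1$) and a global ($t\geq 1$) part matches the paper, and for the local part your Calder\'on--Zygmund plan could in principle be made to work (though the paper's route is far shorter: for $t\leq 1$ one has the pointwise domination~\eqref{truccolie}, $|t^{j}\partial_{t}^{j}\e^{-t\Ls}f|\lesssim \e^{-ct\Ls}|f|$, which reduces $\mathcal H^{0}_{j}$ directly to the heat maximal operator, known to be weak $(1,1)$ by~\cite[Theorem~5.50]{ADY}). The two global pieces of your plan, however, contain genuine gaps.

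For the global part of $\widetilde{\mathcal{H}}_{j}$, the assertion that $\Phi_{j}(r)=\sup_{t\geq 1}|\partial_t^{j} h_t(r)|\in L^{1}(\lambda)$ is false, and the ``strong type $(1,1)$'' you deduce from it cannot hold. Already for $j=0$: from~\eqref{boundsht}, optimizing over $t\geq 1$ at $t\asymp r/Q$ gives $\sup_{t\geq 1}h_{t}(r)\approx r^{-1/2}\e^{-Qr}$ for $r$ large, while the volume density is $\approx\e^{Qr}$, so $\int_{1}^{\infty} \big(\sup_{t\geq 1}h_{t}(r)\big)\,\e^{Qr}\,\dd r \approx\int_{1}^{\infty}r^{-1/2}\,\dd r=\infty$. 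Your additional factor $(1+r)^{2j}$ only makes things worse; indeed the bound $\Psi_{2j}(r,t)\lesssim(1+r)^{2j}$ throws away exactly the information one needs, namely that for $r/t$ bounded the factor $\Psi_{2j}$ is $O(1)$. The paper instead bounds $\sup_{t>1}|\partial_{t}^{j}h_{t}(r)|\lesssim \sup_{t>1}h_{t}(r)+\e^{-Qr}$ (after a case analysis on $r/t\lessgtr 3Q$, where in the second regime the Gaussian exponent beats $\e^{-Qr}$), and then invokes the weak type $(1,1)$ of the heat maximal operator and the Herz/Kunze--Stein-type criterion \cite[Theorem~3.14]{ADY} for the $\e^{-Qr}$ tail; neither piece is in $L^{1}(\lambda)$, so Young's inequality is unavailable.

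For the global part of $\mathcal{H}_{1}$, the factorization $\e^{-t\Ls}=\e^{-(t-1)\Ls}\e^{-\Ls}$ does not close the argument. After writing $t\partial_{t}\e^{-t\Ls}f=(s+1)\partial_{s}\e^{-s\Ls}g$ with $g=\e^{-\Ls}f$, the piece $\sup_{s>1}|s\partial_{s}\e^{-s\Ls}g|$ is again $\mathcal{H}_{1}^{\infty}$ applied to $g$: nothing has been gained. And replacing it by an $L^{2}$ estimate using $\|g\|_{2}\lesssim\|f\|_{1}$ yields, via Chebyshev, $\lambda(\{|Tf|>\alpha\})\lesssim\alpha^{-2}\|f\|_{1}^{2}$, which is a weak $(1,2)$-type bound, not weak $(1,1)$; on an infinite-measure space these are not interchangeable. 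The actual reason $\mathcal{H}_{1}$ is weak $(1,1)$, while $\mathcal{H}_{j}$ for $j\geq 2$ is not (cf.\ \cite{LS}), is a precise cancellation in the first time derivative that the generic bound $\Psi_{2}(r,t)h_{t}$ does not see. The paper captures it in Proposition~\ref{prop:LS}, $\big|\partial_{t}h_{t}(r)\big|\lesssim\big(\big|\tfrac{r^{2}}{4t^{2}}-\tfrac{Q^{2}}{4}\big|+\tfrac1t\big)h_{t}(r)$: the bracket vanishes at $r=Qt$, which is exactly where $h_{t}(r)$ is maximal in $t$, and this leads to $\sup_{t>1}|t\partial_{t}h_{t}(r)|\lesssim\e^{-Qr}$ for $r\geq 1$, the borderline decay admissible by \cite[Theorem~3.14]{ADY}. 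Without an analogue of this refined estimate your scheme cannot recover the endpoint for $\mathcal{H}_{1}$.
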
 
Before we prove this result, we need a more precise estimate of the first time derivative of $h_t$. This is a generalization of~\cite[(5.16)]{LS}, from which we borrow part of the proof, to Damek--Ricci spaces.
\begin{proposition}\label{prop:LS}
For every $r,t>0$, the following holds 
\begin{align*}
    \bigg|\frac{\partial}{\partial t} h_t(r)\bigg| \lesssim \left(\left|\frac{r^2}{4t^2}-\frac{Q^2}{4}\right|+\frac{1}{t}\right)h_t(r).
\end{align*}
\end{proposition}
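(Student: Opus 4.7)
The plan is to differentiate the explicit formulas~\eqref{heatkerneleven}--\eqref{heatkernelodd} for $h_t$ in $t$ and extract the main term $\big(r^2/(4t^2) - Q^2/4\big) h_t$, which arises naturally from $\partial_t$ acting on the exponential factor $\e^{-Q^2 t/4 - r^2/(4t)}$. The two parities of $\nu$ are handled along the same lines.

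For $\nu$ even, I would use~\eqref{asymgauss} to write $h_t = \mathfrak{c}_0 t^{-1/2} \e^{-Q^2 t/4 - r^2/(4t)} R(r,t)$ with $R(r,t) = \sum_{j=1}^{N} a_j(r)\, t^{-j}$ and $N = (\mu+\nu)/2$. Direct differentiation gives
\[
\partial_t h_t = \Big(\tfrac{r^2}{4t^2} - \tfrac{Q^2}{4} - \tfrac{1}{2t}\Big) h_t + \mathfrak{c}_0 t^{-1/2} \e^{-Q^2 t/4 - r^2/(4t)}\, \partial_t R,
\]
and since $\partial_t R = -\sum_j j a_j(r) t^{-j-1}$, the proposition reduces to $\big|\sum_j j a_j(r) t^{-j}\big| \lesssim \sum_j a_j(r) t^{-j}$, which holds with constant $N$ once $a_j(r) \geq 0$ for every $j$. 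I would establish this non-negativity by induction on $p+q$, proving simultaneously that each $a_j$ is non-increasing in $r$: one application of $-\Rs$ or $-\sinh^{-1}(r/2)\partial_r$ to $\sum_j a_j(r) t^{-j} \e^{-r^2/(4t)}$ produces a $t^{-j}$-coefficient of the form $[r\, a_{j-1}(r) - 2 a_j'(r)]/(2\sinh(r/2))$ (or its $\sinh r$ analogue), which is non-negative under the inductive hypothesis $a_j \geq 0$ and $a_j' \leq 0$.

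For $\nu$ odd, I would expand $g_0(s) = \sum_{j=1}^{M} b_j(s) t^{-j} \e^{-s^2/(4t)}$ in~\eqref{heatkernelodd} with $M = (\mu+\nu+1)/2$ and differentiate under the integral. Using $\partial_t \e^{-s^2/(4t)} = s^2/(4t^2) \e^{-s^2/(4t)}$ and splitting $s^2 = r^2 + (s^2 - r^2)$ yields
\[
\partial_t h_t - \Big(\tfrac{r^2}{4t^2} - \tfrac{Q^2}{4}\Big) h_t = -\tfrac{1}{2t} h_t - \tfrac{h_t}{t}\cdot \tfrac{I_*}{I} + \tfrac{\mathfrak{c}_0 \pi^{-1/2}}{4 t^{5/2}}\, \e^{-Q^2 t/4} \int_r^\infty (s^2 - r^2)\, z_r(s)\, g_0(s)\, \dd s,
\]
where $I = \int_r^\infty z_r g_0 \, \dd s$ and $I_* = \int_r^\infty z_r \sum_j j b_j(s) t^{-j} \e^{-s^2/(4t)} \, \dd s$. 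The ratio $I_*/I$ lies in $[1,M]$ by the inductive non-negativity of the $b_j$. The remaining integral is handled by the substitution $u = (s^2-r^2)/(4t)$, under which $\e^{-s^2/(4t)} = \e^{-r^2/(4t)-u}$; that integral becomes $8t^2 \e^{-r^2/(4t)} \int_0^\infty u\, f(u) \e^{-u}\, \dd u$ while $I$ becomes $2t \e^{-r^2/(4t)} \int_0^\infty f(u) \e^{-u}\, \dd u$ with the common factor $f(u) = \big(z_r(s) G(s,t)/s\big)\big|_{s=\sqrt{r^2+4tu}}$. Their ratio is therefore $4t$ times the first moment of $u$ against the measure with density proportional to $f(u) \e^{-u}$, and this moment is bounded by a structural constant because $f$ has at most sub-exponential growth in $u$: the $\e^{-Qs/2}$-type decay of $z_r\, G/s$ for large $s$ dominates the $\e^{s/2}$ growth of $z_r$.

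The main obstacle will be propagating the \emph{joint} positivity and monotonicity of the coefficients $a_j(r)$ and $b_j(s)$ through each inductive step; although positivity follows quickly from the stated invariant, preserving monotonicity after differentiation demands a careful analysis of second derivatives and rests on the elementary convexity-type observations that $r/\sinh r$ and $r/\sinh(r/2)$ are positive, decreasing, and in fact convex on $(0,\infty)$.
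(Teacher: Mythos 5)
Your plan for $\nu$ even is essentially the paper's: write $h_t = \mathfrak{c}_0\,t^{-1/2}\e^{-Q^2 t/4}\,\Rs_{\mu/2,\nu/2}\,\e^{-r^2/(4t)}$ via~\eqref{asymgauss}, differentiate in $t$, isolate the part of $\partial_t$ falling on $\e^{-Q^2 t/4-r^2/(4t)}$, and bound what is left, namely $-t^{-1}\sum_j j\,a_j(r)\,t^{-j}\e^{-r^2/(4t)}$, by a constant multiple of $t^{-1}|\Rs_{\mu/2,\nu/2}\e^{-r^2/(4t)}|$ once $a_j\geq 0$ is known. The paper simply cites \cite{ADY} for that non-negativity. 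Your proposal to re-derive it by induction, by propagating the pair of invariants $a_j\geq 0$ and $a_j'\leq 0$, has a gap that you have partially flagged yourself: one step of $-\Rs$ (or $-\sinh(r/2)^{-1}\partial_r$) produces, up to harmless constants, a new coefficient of the form $\big[\tfrac r2 a_{j-1}-a_j'\big]/\sinh r$, and to run the induction you need this new function to again have non-positive derivative, which requires control on $a_j''$; the pair $(a_j\geq 0,\ a_j'\leq 0)$ is therefore \emph{not} a self-propagating invariant. Either strengthen the invariant (some form of complete monotonicity) or, more simply, quote \cite{ADY} as the paper does.

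For $\nu$ odd, the algebraic decomposition you set up (extracting $r^2/(4t^2)-Q^2/4$, the $-1/(2t)$ term, the $t\,\partial_t$ acting on the coefficients, and the leftover integral in $s^2-r^2$) matches the paper's, but your treatment of that leftover integral contains a genuine gap. The paper replaces $\Rs_{\mu/2,(\nu+1)/2}\e^{-s^2/(4t)}$ by the comparable explicit quantity of~\cite[Proposition~5.22]{ADY} and then invokes~\cite[Lemma~6]{LS} to get $t^{-1/2}\e^{-Q^2 t/4}I(t,r)\lesssim t^{-1}h_t(r)$. You instead substitute $u=(s^2-r^2)/(4t)$ and reduce to bounding the ratio $\int_0^\infty u\,f(u)\e^{-u}\,du\big/\int_0^\infty f(u)\e^{-u}\,du$ by a structural constant, justifying this by noting that $f$ has at most sub-exponential growth in $u$. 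That does \emph{not} suffice: a density proportional to $f(u)\e^{-u}$ can put essentially all of its mass at arbitrarily large $u$ even when $f$ is sub-exponential (e.g.\ $f$ a bump around $u=A$ for $A$ large), making the first moment unbounded. What one actually needs, and what the LS lemma supplies, is a quantitative quasi-decreasing property of $f$: the $u^{-1/2}$-type singularity at $u=0^+$ inherited from $z_r$, together with a uniform comparison of $f$ on $[c,\infty)$ to its value near $u=c$, is what pins the mass near the origin. Your change of variables is a reasonable route, but this step needs an honest argument about the shape of $f$, not just a growth bound.
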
 
\begin{proof}We maintain the notation $\Rs_{p,q}$ introduced in the previous section, and we first assume that $\nu$ is even. In this case,
\begin{align*}
    h_t(r)=\mathfrak{c}_0 t^{-\frac{1}{2}} \e^{-\frac{Q^2}{4} t} \mathcal{R}_{\frac{\mu}{2},\frac{\nu}{2}}\, \e^{-\frac{r^2}{4t}}.
\end{align*} 
Thus
\begin{align*}
  \mathfrak{c}_0^{-1}  \frac{\partial}{\partial t}  h_t(r)&= -\frac{1}{2}t^{-\frac{3}{2}}\e^{-\frac{Q^2}{4} t} \mathcal{R}_{\frac{\mu}{2},\frac{\nu}{2}} \e^{-\frac{r^2}{4t}}- \frac{Q^2}{4}  t^{-\frac12}\e^{-\frac{Q^2}{4} t} \mathcal{R}_{\frac{\mu}{2},\frac{\nu}{2}}\e^{-\frac{r^2}{4t}} \\ 
    & \qquad +  t^{-\frac12}\e^{-\frac{Q^2}{4} t}  \frac{\partial}{\partial t}\mathcal{R}_{\frac{\mu}{2},\frac{\nu}{2}}\e^{-\frac{r^2}{4t}},
\end{align*} 
where by~\eqref{asymgauss}
\[
    \frac{\partial}{\partial t} \mathcal{R}_{\frac{\mu}{2},\frac{\nu}{2}} \e^{-\frac{r^2}{4t}}=-\sum_{j=1}^{(\mu+\nu)/2}ja_j(r)t^{-j-1}\e^{-\frac{r^2}{4t}}+\frac{r^2}{4t^2} \sum_{j=1}^{(\mu+\nu)/2}a_j(r)t^{-j}\e^{-\frac{r^2}{4t}}.
\]
Therefore
\begin{align*}
     &\frac{\partial}{\partial t}  h_t(r) = -\frac{1}{2t}h_t(r)+\bigg[\frac{r^2}{4t^2}-\frac{Q^2}{4}\bigg] h_t(r)- \frac1t \mathfrak{c}_0 t^{-\frac12}\e^{-\frac{Q^2}{4} t}  \sum_{j=1}^{(\mu+\nu)/2}ja_j(r)t^{-j}\e^{-\frac{r^2}{4t}},
\end{align*} 
and in particular,  since $a_{j}$'s are non-negative functions (cf.~\cite{ADY}),
\begin{align*}
    \left| \frac{\partial}{\partial t}h_t(r)\right| \lesssim \bigg(\bigg|\frac{r^2}{4t^2}-\frac{Q^2}{4}\bigg|+\frac{1}{t}\bigg) h_t(r).
\end{align*} 
Next, we assume that $\nu$ is odd. Then
\begin{align*}
     h_t(r)=\mathfrak{c}_0 \pi^{-\frac12} t^{-\frac12}\e^{-\frac{Q^2}{4} t}  \int_{r}^\infty \frac{\sinh s}{\sqrt{\cosh s-\cosh r}} \mathcal{R}_{\frac{\mu}{2},\frac{\nu+1}{2}} \e^{-\frac{s^2}{4t}} \ ds,
\end{align*} 
and by similar computations to those above
\begin{align*}
   \mathfrak{c}_0^{-1} \pi^{\frac12}  \frac{\partial }{\partial t}  h_t(r) &= \bigg[-\frac{1}{2t}+\frac{r^2}{4t^{2}}-\frac{Q^2}{4}\bigg]   \mathfrak{c}_0^{-1} \pi^{\frac12}  h_t(r)\\ &+ t^{-\frac12}\e^{-\frac{Q^2}{4} t}  \int_{r}^\infty \frac{\sinh s}{\sqrt{\cosh s-\cosh r}} \frac{s^2-r^2}{4t^2} \mathcal{R}_{\frac{\mu}{2},\frac{\nu+1}{2}}  \e^{-\frac{s^2}{4t}}\ ds \\ 
    &-  t^{-\frac12}\e^{-\frac{Q^2}{4} t}  \int_{r}^\infty \frac{\sinh s}{\sqrt{\cosh s-\cosh r}} \sum_{j=1}^{n/2}ja_j(s)t^{-j-1}\e^{-\frac{s^2}{4t}} \ ds.
    \end{align*} 
Therefore
\begin{align*}
     \left| \frac{\partial}{\partial t}h_t(r)\right| &\lesssim \bigg|\frac{r^2}{4t^{2}}-\frac{Q^2}{4} \bigg|h_t(r)+ \frac{1}{t}h_t(r) + t^{-\frac12}\e^{-\frac{Q^2}{4} t}  I(t,r),
     \end{align*}
     where,  by \cite[Proposition 5.22]{ADY},
     \begin{align*}
 & I(t,r) =   \int_{r}^\infty \frac{\sinh s}{\sqrt{\cosh s-\cosh r}} \frac{s^2-r^2}{4t^2} \mathcal{R}_{\frac{\mu}{2},\frac{\nu+1}{2}}  \e^{-\frac{s^2}{4t}} \ ds \\
  &\quad  \approx  \int_{r}^\infty \frac{\sinh s}{\sqrt{\cosh s-\cosh r}} \frac{s^2-r^2}{4t^2} \frac{1+s}{t}\bigg(1+\frac{1+s}{t}\bigg)^{\frac{\mu}{2}-1+\frac{\nu+1}{2}}\e^{-(\frac{\mu}{4}+\frac{\nu+1}{2})s-\frac{s^2}{4t}} \ ds.
\end{align*}
One can now follow \cite[Lemma 6]{LS} to obtain that 
\begin{align*}
  t^{-\frac12}\e^{-\frac{Q^2}{4} t}  I(t,r)  \lesssim \frac{1}{t}h_t(r),
\end{align*} which implies the desired conclusion. 
We omit the details.
\end{proof}
\begin{proof}[Proof of Theorem \ref{theorem:maximal}]
To begin with, observe that $\mathcal{H}_j \le \mathcal{H}_j^0+\mathcal{H}_j^\infty$,  where 
\[
\mathcal{H}_{j}^0 f = \sup_{0<t\leq 1}\bigg|f\ast t^j\frac{\partial^j}{\partial t^{j} } h_t\bigg|, \qquad  \mathcal{H}_j^\infty f=\sup_{t > 1}\bigg|f\ast t^j\frac{\partial^j}{\partial t^{j} } h_t\bigg|.
\]
We split $\widetilde{\mathcal{H}}_{j}$ analogously. By general well-known estimates for $h_{t}$, cf.~\cite{VSCC} or~\cite[Lemma 3.2]{BPV}, for all $j\in \N$ there is $c=c(j)>0$ such that
\begin{align}\label{truccolie}
\bigg|t^{j} \frac{\partial^{j}}{\partial {t}^{j}} \e^{-t\Ls} f\bigg|\lesssim  \e^{-ct\Ls} |f|, \qquad t\in (0,1].
\end{align}
Therefore, the weak type $(1,1)$ of $\mathcal{H}_{j}^0 $ follows by that of $\mathcal{H}_{0}^0 $~\cite[Theorem 5.50]{ADY}. Thus, we focus on the boundedness of $\mathcal{H}_1^\infty$ and $\widetilde{\mathcal{H}}_j^\infty$. Observe first that
\[
\sup_{t>1} \bigg|f\ast t \frac{\partial}{\partial t} h_t\bigg| \leq |f|\ast \sup_{t>1} \bigg|t \frac{\partial}{\partial t}h_t\bigg|, \qquad \sup_{t>1} \bigg|f\ast \frac{\partial^{j}}{\partial t^{j}} h_t\bigg| \leq |f|\ast \sup_{t>1} \bigg|\frac{\partial^{j}}{\partial t^{j}} h_t\bigg|.
\]
Suppose $t>1$.  By Proposition \ref{prop:LS}, 
\[
    \bigg|t \frac{\partial}{\partial t} h_t(r)\bigg| \lesssim h_t(r)\left(\left|\frac{r^2}{4t}-\frac{Q^2}{4}t\right|+1\right).
\]
One can then follow closely~\cite[Lemma 8]{LS} to deduce that, for $r \ge 1$,
\begin{align*}
    \sup_{t>1}  \bigg|t \frac{\partial}{\partial t} h_t(r)\bigg|  \lesssim \e^{-Qr},
\end{align*}
and by \cite[Theorem 3.14]{ADY}, this implies that  the convolution operator whose kernel is $\textbf{1}_{B_1^c(e)}\big|t \frac{\partial}{\partial t}  h_t\big|$ is of weak type $(1,1)$. If $r\le 1$, instead,
\begin{align*}
     &\sup_{t>1}\bigg|t \frac{\partial}{\partial t} h_t(r)\bigg| \lesssim  \sup_{t >1} \bigg(\bigg|\frac{r^2}{4t}-\frac{Q^2}{4}t\bigg|+1\bigg)t^{-\frac{3}{2}}r \bigg(1+\frac{r}{t}\bigg)^{\frac{n-3}{2}} \e^{-\frac{r^2}{4t}-\frac{Q}{2}r-\frac{Q^2}{4}t} \lesssim 1,
\end{align*} 
thus $\mathcal{H}_1$ is of weak type (1,1). We now focus on $\widetilde{\mathcal{H}}_j$ for $j \ge 1.$ By~\eqref{truccolie}, it suffices to prove that the operator
\begin{align*}
  f \mapsto f \ast  \sup_{t>1}\bigg| \frac{\partial^j}{\partial t^j} h_t\bigg|
\end{align*}
is of weak type (1,1). By Theorem~\ref{teo:main} $(2)$, for $t>1$
\begin{align*}
    \bigg| \frac{\partial^j}{\partial t^j} h_t(r)\bigg| \lesssim \bigg[1+\bigg(\frac{r}{t}\bigg)^{2j}\bigg]h_t(r), \qquad r>0,
\end{align*}
so it is in turn enough to prove that
\[
    f \mapsto f \ast \bigg|\sup_{t>1}\bigg(\frac{r}{t}\bigg)^{2j}h_t(r)\bigg|
\]
is of weak type (1,1).  If $\frac{r}{t}\le 3Q$, then
   \begin{align*}
     \bigg(\frac{r}{t}\bigg)^{2j}h_t(r)\lesssim h_t(r).
   \end{align*} 
Suppose now $\frac{r}{t}>3Q$. Then
   \begin{align*}
        \bigg(\frac{r}{t}\bigg)^{2j}h_t(r) &\approx   \bigg(\frac{r}{t}\bigg)^{2j}  t^{-\frac32} (1+r)  \left(1+ \frac{1+r}{t}\right)^{\frac{n-3}{2}}  \e^{-\frac{r^2}{4t}-\frac{Q}{2}r-\frac{Q^2}{4}t} \\ 
        &\lesssim  \bigg(\frac{r}{t}\bigg)^{2j+1+\frac{n-3}{2}} \e^{-\frac{r^2}{4t}-\frac{Q}{2}r-\frac{Q^2}{4}t}.
   \end{align*}
Now we observe that
\begin{align*}
    \frac{r^2}{4t}+\frac{Q^2}{4}t+\frac{Q}{2}r \ge r \bigg( \frac{r}{4t}+\frac{Q}{2}\bigg) \ge r  \frac{5Q}{4},
\end{align*} 
whence
\begin{align*}
    \bigg(\frac{r}{t}\bigg)^{2j+1+\frac{n-3}{2}} \e^{-\frac{r^2}{4t}-\frac{Q}{2}r-\frac{Q^2}{4}t} \lesssim r^{2j+\frac{1}{2}+\frac{n-3}{2}} \e^{-\frac{Q5}{4}r}\lesssim \e^{-Qr}.
\end{align*} 
Summing up, we have proved that
\begin{align*}
  \sup_{t> 1}  \bigg|\frac{\partial^j}{\partial t^j} h_t(r) \bigg| \lesssim \sup_{t> 1} h_t(r) + \e^{-Qr}.
\end{align*} An application of \cite[Theorem 3.14, (5.55)]{ADY} gives the desired conclusion.
 \end{proof}

    \subsection{Riemannian Ornstein--Uhlenbeck operators}
Suppose $t>0$, and consider the operator
	\begin{equation*}
		\Ls_{t}= \mathcal{L} - \frac{\nabla {h_t}}{{h_{t}}} \cdot \nabla, \qquad \Dom(\Ls)=C_{c}^{\infty},
	\end{equation*}
	which arises from the Hermitian  form $(\phi,\psi) \mapsto \int_{S} \nabla \phi\cdot \nabla \bar{\psi}\,  h_{t}\, \dd \lambda$, see e.g.~\cite{BC3}. The operator $\Ls_{t}$ can be considered as a Riemannian version on $S$ of the classical Ornstein--Uhlenbeck operator, cf.~\cite{Baudoinetal, LustPiquard, BC1, BC2} and references therein. It is nowadays classical, see e.g.~\cite[Theorem 2.4]{Strichartz}, that $\mathcal{L}_{t}$ is essentially self-adjoint on $L^2(\lambda_{t})$, where $\lambda_{t}$ is the absolutely continuous measure with density $h_{t}$ with respect to $\lambda$. We denote by $\bar{\mathcal{L}}_{t}$ its closure, which is its unique self-adjoint extension. 
	
We recall that an operator has \emph{purely discrete} spectrum if its spectrum is a discrete set and consists of eigenvalues of finite multiplicity. Then, we have the following.
	
	\begin{theorem}
$\bar{\mathcal{L}}_t$ has purely discrete spectrum for all $t>0$.
	\end{theorem}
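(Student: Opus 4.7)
The plan is a ground-state transformation reducing $\bar{\mathcal{L}}_t$ to a Schr\"odinger operator with confining potential, followed by Persson's characterization of the essential spectrum. Since $h_t>0$ everywhere and $\lambda_t$ is a finite measure, the map $U\colon L^2(\lambda_t)\to L^2(\lambda)$, $Uf:=h_t^{1/2}f$, is a unitary isomorphism restricting to a bijection of $C_c^\infty(S)$. A routine computation based on the Leibniz-type identity $\mathcal{L}(fg)=(\mathcal{L}f)g+f(\mathcal{L}g)-2\nabla f\cdot\nabla g$ yields, on $C_c^\infty(S)$,
\[
U\mathcal{L}_tU^{-1}=\mathcal{L}+V_t, \qquad V_t:=-\frac{\mathcal{L}(h_t^{1/2})}{h_t^{1/2}}=-\frac{1}{4}\frac{|\nabla h_t|^2}{h_t^{2}}-\frac{1}{2}\frac{\mathcal{L}h_t}{h_t},
\]
which in the Euclidean/Gaussian case reduces to the familiar $r^2/(16t^2)-n/(4t)$. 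The potential $V_t$ is smooth on $S$, and since $\mathcal{L}$ is non-negative self-adjoint on the complete Riemannian manifold $S$, the Persson-type characterization of the infimum of the essential spectrum gives
\[
\inf\sigma_{\mathrm{ess}}\bigl(\overline{\mathcal{L}+V_t}\bigr)\geq\lim_{R\to\infty}\inf_{r(x)>R}V_t(x).
\]
Hence it will suffice to prove $V_t(x)\to+\infty$ as $r(x)\to\infty$, for then this limit is $+\infty$, the essential spectrum is empty, and (by unitary equivalence) so is $\sigma_{\mathrm{ess}}(\bar{\mathcal{L}}_t)$.

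For the divergence of $V_t$ at infinity I will invoke Theorem~\ref{teo:main}~(3) with $t$ fixed and $r\to\infty$. Since $\mathfrak{c}_k/\mathfrak{c}_0=2^{-k}$, that asymptotic gives immediately $h_t^{(k)}(r)/h_t(r)\sim(-r/(2t))^k$, so both $(h_t'/h_t)^2$ and $h_t''/h_t$ are asymptotic to $r^2/(4t^2)$. Combining with the radial expression
\[
\mathrm{rad}(\mathcal{L})=-\partial_r^2-\bigl[\tfrac{\mu+\nu}{2}\coth(r/2)+\tfrac{\nu}{2}\tanh(r/2)\bigr]\partial_r
\]
and the fact that $\coth(r/2),\tanh(r/2)\to 1$ while $(\mu+\nu)/2+\nu/2=Q$, one gets $\mathcal{L}h_t/h_t\sim -r^2/(4t^2)+O(r/t)$. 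Substitution into the definition of $V_t$ then produces
\[
V_t(r)=\Bigl(-\tfrac{1}{4}\!\cdot\!\tfrac{1}{4}+\tfrac{1}{2}\!\cdot\!\tfrac{1}{4}\Bigr)\tfrac{r^2}{t^2}(1+o(1))+O\!\left(\tfrac{r}{t}\right)=\tfrac{r^2}{16\,t^2}(1+o(1))\to+\infty,
\]
which is the required divergence.

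The main obstacle is precisely the delicate cancellation in this last step: both $(h_t'/h_t)^2$ and $h_t''/h_t$ have the \emph{same} leading order $r^2/(4t^2)$, so the two-sided bounds of Theorem~\ref{teo:main}~(1) alone would only give $|V_t|\lesssim r^2/t^2$, without a definite sign. What saves the day is that Theorem~\ref{teo:main}~(3) pins down both asymptotics with matching constants, so that the difference $-\tfrac{1}{16}+\tfrac{1}{8}=\tfrac{1}{16}>0$ of coefficients survives. Once the divergence of $V_t$ is in hand, the Persson argument is entirely standard and the proof is concluded.
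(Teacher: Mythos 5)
Your proposal follows the paper's strategy exactly: conjugate $\mathcal{L}_t$ by $U_t f = h_t^{1/2}f$ to obtain the Schr\"odinger operator $\mathcal{L}+V_t$ on $L^2(\lambda)$, then use Theorem~\ref{teo:main}~(3) to derive $V_t(r)\sim r^2/(16t^2)\to\infty$, which forces purely discrete spectrum; your computation of the leading coefficient (via $h_t^{(k)}/h_t \sim (-r/2t)^k$ and $\mathrm{rad}(\mathcal{L})$) matches the paper's. The only difference is cosmetic: you invoke Persson's characterization of the essential spectrum where the paper cites Propositions 2.2, 4.6 and 7.2 of~\cite{BC3}, but both rest on the same confining-potential mechanism.
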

\begin{proof}
Consider the operator $U_t\, \mathcal{L}_{t} \, U_t^{-1}$ on $L^2(\lambda)$ obtained by conjugating $\mathcal{L}_{t}$ with the isometry $U_t\colon L^2(\lambda_{t}) \rightarrow L^2(\lambda)$ given by $U_t f=f\sqrt{h_{t}}$. Simple computations, see e.g.~\cite[Sec.\ 7]{BC3}, lead to
\begin{align*}
 U_t \mathcal{L}_{t}\, U_t^{-1}=\Ls+V_t, \qquad   V_{t}= -\frac{1}{4} \frac{|\nabla h_t|^2}{h_t^2}-\frac{1}{2}\frac{\mathcal{L}h_t}{h_t}
\end{align*}
on $C_{c}^{\infty}$, where $V_t$ has to be meant as a multiplication operator by the function $V_{t}$. Observe that $V_{t}$ is smooth since $h_{t}$ is smooth and does not vanish.

By Theorem~\ref{teo:main}~(3), as $r \to \infty$,
\[
     V_{t}(r) \sim -\frac{1}{16} \left(\frac{r}{t}\right)^2+\frac{1}{8}\left(\frac{r}{t}\right)^2=\frac{1}{16}\left(\frac{r}{t}\right)^2.
\]
In other words, $V_t(r)\asymp  \left(\frac{r}{t}\right)^2$ for $r \to \infty$ (we notice that this is the same behavior of the sub-Riemannian analogs of $V_{t}$ on H-type groups, see~\cite[Proposition 5.3]{BC2}, and of its Euclidean version on $\R^{d}$; we wonder whether this is a manifestation of a more general result).  This implies that $V_{t}$ is bounded from below, whence $\Ls+V_{t}$ is essentially self-adjoint on the domain $C_{c}^{\infty}$ by~\cite[Proposition 2.2]{BC3}. Moreover, its closure $\overline{\Ls+V_{t}}$ has purely discrete spectrum by~\cite[Proposition 4.6]{BC3}. But by~\cite[Proposition 7.2]{BC3}, $\overline{\Ls+V_{t}}$ and $\bar{\mathcal{L}}_t$ are unitarily equivalent, whence the spectrum of $\bar{\mathcal{L}}_{t}$ is discrete as well.
\end{proof}
    
    \subsection*{Acknowledgements}
We wish to thank Maria Vallarino for fruitful discussions on Damek--Ricci spaces, and Jean-Philippe Anker for pointing out the reference~\cite{GM} which allowed us to state Theorem~\eqref{teo:main}~(3) in its present form. 

This work was initiated when the first-named author was a postdoctoral fellow of the Research Foundation -- Flanders (FWO) at Ghent University, Belgium, under the postdoctoral grant 12ZW120N. The second-named author is member of the project  ``Harmonic analysis on continuous and discrete structures'' funded by Compagnia di San Paolo (CUP E13C21000270007). Both authors were also partially supported by the INdAM--GNAMPA 2022 Project ``Generalized Laplacians on continuous and discrete structures'' (CUP\_E55F22000270001).

\end{document}